\DeclarePairedDelimiter{\ceil}{\lceil}{\rceil}
\DeclarePairedDelimiter{\floor}{\lfloor}{\rfloor}
\DeclarePairedDelimiter{\norm}{\lVert}{\rVert}
\newcommand{\R}{\mathbb{R}}
\newcommand{\CC}{\mathbb{C}}
\newcommand{\N}{\mathbb{N}}
\newcommand{\cl}{\overline}
\newcommand\Define[1]{\textbf{#1}}
\DeclareMathOperator{\supp}{supp}
\newcounter{counter}
\theoremstyle{definition}
\newtheorem{theorem}[counter]{Theorem}
\newtheorem*{theorem*}{Theorem}
\newtheorem{proposition}[counter]{Proposition}
\newtheorem{lemma}[counter]{Lemma}
\newtheorem{definition}[counter]{Definition}
\newtheorem{corollary}[counter]{Corollary}
\newtheorem{example}[counter]{Example}
\newtheorem{remark}[counter]{Remark}
\newcommand{\bigovee}{\mathop{\vphantom{\sum}\mathchoice%
        {\vcenter{\hbox{\huge $\ovee$}}}%
        {\vcenter{\hbox{\Large $\ovee$}}}%
        {\ovee}{\ovee}}\displaylimits}
\begin{document}
\StopCensoring
\title{A characterisation of ordered abstract probabilities}

\author{\IEEEauthorblockN{\censor{Abraham Westerbaan}}
\IEEEauthorblockA{\censor{Radboud Universiteit Nijmegen}\\
                \censor{bram@westerbaan.name}}
\and\IEEEauthorblockN{\censor{Bas Westerbaan}}
\IEEEauthorblockA{\censor{University College London}\\
                \censor{bas@westerbaan.name}}
\and\IEEEauthorblockN{\censor{John van de Wetering}}
\IEEEauthorblockA{\censor{Radboud Universiteit Nijmegen}\\
                \censor{john@vdwetering.name}}}

\maketitle

\begin{abstract}
In computer science,
    especially when dealing with quantum computing
        or other non-standard models of computation,
    basic notions in probability theory
    like ``a predicate'' vary wildly.
There seems to be one constant:
    the only useful example of an algebra of probabilities is
    the real unit interval.
In this paper we try to explain this phenomenon.
We will show that the structure of the real unit interval naturally
arises from a few reasonable assumptions.
We do this by studying \emph{effect monoids}, an abstraction of the algebraic structure of the real unit interval:
it has an addition $x+y$ which is only defined when $x+y\leq 1$ and an involution $x\mapsto 1-x$ which make it an effect algebra, in combination with an associative (possibly non-commutative) multiplication.
Examples include the unit intervals of ordered rings and Boolean algebras.

We present a structure theory for effect monoids that are
$\omega$-complete, i.e.\ where every increasing sequence has a supremum.

We show that any $\omega$-complete effect monoid 
embeds into the direct sum of a Boolean algebra and
the unit interval of a commutative unital
    C$^*$-algebra.
This gives us from first principles a dichotomy between sharp logic, represented by the Boolean algebra part of the effect monoid, and probabilistic logic, represented by the commutative C$^*$-algebra.
Some consequences of this characterisation are that the multiplication must always be commutative, and that the unique $\omega$-complete effect monoid without zero divisors and more than 2 elements must be the real unit interval.
Our results give an algebraic characterisation and motivation for why any physical or logical theory would represent probabilities by real numbers.
\end{abstract}


\section{Introduction}


Probability theory in the quantum realm is different in important ways from that of the classical world. Nevertheless, they both crucially rely on real numbers to represent probabilities of events. This makes sense as observations of quantum systems must still be interpreted trough classical means. However, in principle one can imagine a world governed by different physical laws where even the standard notion of a probability is different, or wish to study probabilistic models where one does not care about the specifics of their probabilities; such an approach can for instance be found in categorical quantum mechanics~\cite{abramsky2004categorical,gogioso2017fantastic,tull2016reconstruction}.
In this paper we study a reasonable class of alternatives to the real unit interval as the set of allowed probabilities. We will establish that this quite general seeming class actually only contains (continuous products of) the real unit interval. This shows that any `reasonable' enough physical theory must necessarily be based on probabilities represented by real numbers.

In order to determine the right set of alternatives to the real unit interval we must first find out what structure is crucial for abstract probabilities. There are a variety of operations on the real unit interval that are used in their interpretation as probabilities. First of all, to be able to talk about coarse-graining the probabilities of mutually exclusive events, we must be able to take the sum $x+y$ of two probabilities $x,y\in[0,1]$ as long as $x+y\leq 1$. Second, in order to represent the \emph{complement} of an event we require the involution given by $x^\perp \equiv 1-x$. The probability $x^\perp$ is the unique number such that $x+x^\perp = 1$. Axiomatising this structure of a partially defined addition combined with an involution defines an \emph{effect algebra}~\cite{foulis1994effect}.
The unit interval of course also has a multiplication $x\cdot y$. This operation is needed in order to talk about, for instance, joint distributions. 
An \emph{effect monoid} is an effect algebra with an associative distributive (possibly non-commutative) multiplication, and hence axiomatises these three interacting algebraic structures (addition, involution and multiplication) present in the unit interval.

In order to define an analogue to Bayes' theorem we would also need the division operation that is available in the unit interval: when $x\leq y$, then there is a probability $z$ such that $y\cdot z = x$ (namely $z=x/y$). We actually will not require the existence of such a division operation, as it turns out to follow (non-trivially) from our final requirement:

A property that sets the unit interval $[0,1]$ apart from, for instance, the rational numbers between $0$ and $1$, is that $[0,1]$ is closed under taking limits. In particular, each ascending chain of probabilities $x_1\leq x_2\leq \ldots$ has a supremum. In other words: the unit interval is $\omega$-complete.
We have then arrived at our candidate for an abstract notion of the set of probabilities: an $\omega$-complete effect monoid. 

Further motivation for the use of this structure as a natural candidate for the set of probabilities is its prevalence in \emph{effectus theory}.
This is a recent approach to categorical logic~\cite{cho2015introduction} and a general framework to deal with notions such as states, predicates, measurement and probability in deterministic, (classical) probabilistic and quantum settings~\cite{kentathesis,basthesis}.
The set of probabilities in an effectus have the structure of an effect monoid.
Examples of effectuses include any generalised probabilistic theory~\cite{barrett2007information}, where the probabilities are the unit interval, but also any topos (and in fact any extensive category with final object), where the probabilities are the Boolean values $\{0,1\}$~\cite{kentathesis}.

An effectus defines the sum of some morphisms. In a \emph{$\omega$-effectus}, this is strengthened to the existence of some countable sums (making it a \emph{partially additive category}~\cite{arbib1980partially}). In such an effectus the probabilities form an $\omega$-complete effect monoid~\cite{kentathesis}.

Effect monoids are of broader interest than only to study effectuses:
examples of effect monoids include all  Boolean algebras
    and unit intervals of partially ordered rings.
Furthermore any effect monoid can be used to define
    a generalised notion of convex set
    and convex effect algebra
    (by replacing the usual unit interval by elements of the effect monoid,
        see~\cite[179 \& 192]{basthesis}
        or~\cite{jacobs2011probabilities}).

This now raises the question of how close a probability theory based on an $\omega$-complete effect monoid is to regular probability theory.

Our main result is that $\omega$-complete effect monoids can always be embedded into a direct sum
of an $\omega$-complete Boolean algebra and the unit interval of a 
bounded-$\omega$-complete commutative unital C$^*$-algebra.
The latter is isomorphic to~$C(X)$ for some basically disconnected compact Hausdorff space~$X$.
If the effect monoid is \emph{directed complete}, 
such thay any directed set has a supremum, 
then it is even \emph{isomorphic}
to the direct sum of a complete Boolean algebra
and the unit interval of a monotone complete commutative C$^*$-algebra.

This result basically states that any $\omega$-complete effect monoid can be split up into a sharp part (the Boolean algebra), and a convex probabilistic part (the commutative C*-algebra). This then gives us from basic algebraic and order-theoretic considerations a dichotomy between sharp and fuzzy logic.

As part of the proof of this embedding theorem we find an assortment of additional structure present in $\omega$-complete effect monoids: it has a partially defined division operation, it is a lattice, and multiplication must necessarily be normal (i.e.~preserve suprema).

The classification also has some further non-trivial consequences. 
In particular, it shows that any $\omega$-complete effect monoid must necessarily be
    commutative.

Finally, we use the classification to show that an $\omega$-complete effect
    monoid without zero divisors must either be trivial, $\{0\}$,
    the two-element
    Boolean algebra, $\{0,1\}$, or the unit interval, $[0,1]$. 
This gives a new
    characterisation of the real unit interval as the unique 
     $\omega$-complete effect monoid without zero 
        divisors and more than two elements, and could be seen as a generalisation of the well-known result that the set of real numbers is the unique Dedekind-complete Archimedian ordered field.

In so far as the structure of an $\omega$-complete effect monoid is
    required for common actions involving probabilities, (coarse-graining, negations, joint distributions, limits)
    our results motivate the usage of real numbers in any hypothetical
    alternative physical theory.

\section{Preliminaries}
Before we state the main results of this paper technically,
    we recall the definitions of the structures involved.

\begin{definition}
    An \Define{effect algebra}
    (EA) $(E, \ovee, 0, (\ )^\perp)$ is a set~$E$ with
    distinguished element~$0 \in E$,
    partial binary operation~$\ovee$ (called \Define{sum})
    and (total) unary operation~$x \mapsto~x^\perp$ (called \Define{complement}),
    satisfying the following axioms,
    writing~$x\perp y$ whenever~$x \ovee y$ is defined
        and~$1 \equiv 0^\perp$.
\begin{itemize}
\item Commutativity: if $x\perp y$, then $y \perp x$ and $x\ovee y = y \ovee x$.
\item Zero: $x\perp 0$ and $x\ovee 0 = x$.
\item Associativity: if $x\perp y$ and $(x\ovee y)\perp z$, then
    $y\perp z$,
$x\perp (y \ovee z)$, and $(x\ovee y) \ovee z = x\ovee (y\ovee z)$.
\item For any~$x \in E$, the complement
    $x^\perp$ is the unique element with $x \ovee x^\perp = 1$.
        
\item If $x\perp 1$ for some~$x\in E$, then $x=0$.
\end{itemize}

For~$x,y \in E$ we write~$x \leq y$ whenever there is a~$z \in E$
    with~$x \ovee z = y$.
This turns~$E$ into a poset with minimum~$0$ and maximum~$1$.
The map~$x \mapsto x^\perp$ is an order anti-isomorphism.
Furthermore~$x \perp y$ if and only if~$x \leq y^\perp$.
If~$x \leq y$, then the element~$z$ with~$x \ovee z = y$
    is unique and is denoted by~$y \ominus x$~\cite{foulis1994effect}.

A morphism~$f\colon E \to F$ between effect algebras
    is a map such that~$f(1)=1$
        and~$f(x) \perp f(y)$ whenever~$x \perp y$,
            and then~$f(x \ovee y) = f(x) \ovee f(y)$.
    A morphism necessarily preserves the complement, $f(x^\perp) = f(x)^\perp$,
    and the order: $x\leq y \implies f(x)\leq f(y)$.
    A morphism is an \Define{embedding} when it is also \Define{order reflecting}:
    if $f(x)\leq f(y)$ then $x\leq y$.
    Observe that an embedding is automatically injective.
    We say $E$ and $F$ are \Define{isomorphic} and write $E\cong F$ when 
    there exists an isomorphism (i.e.~a bijective morphism whose inverse is a morphism too) from $E$ to $F$.
    Note that an isomorphism is the same as a surjective embedding.
\end{definition}

\begin{example}\label{ex:orthomodularlattice}
    Let $(B,0,1,\wedge, \vee, (\ )^\perp)$ be an orthomodular lattice. Then $B$ is an effect algebra with the partial addition defined by 
    $x\perp y \iff x\wedge y = 0$ and in that case  $x\ovee y = x\vee y$. The complement, $(\ )^\perp$, is given by the orthocomplement, $(\ )^\perp$.
    The lattice order coincides with the effect algebra order (defined
    above). See e.g.~\cite[Prop.~27]{basmaster}.
\end{example}

\begin{example}\label{ex:cstaralgebra}
Let~$G$ be an ordered abelian group
    (such as the self-adjoint part of a C$^*$-algebra).
    Then any interval $[0,u]_G \equiv
    \{a\in G~;~ 0\leq a \leq u\}$ where~$u$
    is a positive element of~$G$ forms an effect algebra, with
    addition given by $a\perp
    b \iff a+b\leq u$ and in that case  $a\ovee b = a+b$. The complement is
    defined by $a^\perp = u-a$.
    The effect algebra order on~$[0,u]_G$ coincides
    with the regular
    order on~$G$.  
    
    In particular,
    the set of effects $[0,1]_C$
    of a unital C$^*$-algebra~$C$ forms an effect algebra
    with $a\perp b\iff a+b\leq 1$,
    and~$a^\perp = 1-a$.
\end{example}

Effect algebras have been studied extensively
    (to name a few:
    \cite{chajda2009every,ravindran1997structure,
    gudder1996examples,gudder1996effect,jenvca2001blocks,
    dvurevcenskij2010every,foulis2010type})
    and even found surprising applications in quantum contextuality~~\cite{staton2018effect,roumen2016cohomology} and the study of Lebesque integration~\cite{jacobs2015effect}.
The following remark gives some categorical motivation to the definition of effect algebras.

\begin{remark}
  An effect algebra is a \Define{bounded poset}: a partially ordered
  set with a minimal and maximal element. In~\cite{mayet1995classes}
  it is shown that any bounded poset $P$ can be embedded into an
  orthomodular poset $K(P)$. This is known as the \Define{Kalmbach
  extension}~\cite{kalmbach1977orthomodular}. This extends to a
  functor from the category of bounded posets to the category of
  orthomodular posets, and this functor is in fact left adjoint to
  the forgetful functor going in the opposite
  direction~\cite{harding2004remarks}. This adjunction gives rise
  to the \Define{Kalmbach monad} on the category of bounded posets.
  The Eilenberg--Moore category for the Kalmbach monad is isomorphic
  to the category of effect algebras, and hence effect algebras
    are in fact algebras over bounded posets~\cite{jenvca2015effect}.
\end{remark}

The category of effect algebras is both complete and cocomplete.
There is also an algebraic tensor product of effect algebras that makes the category of effect algebras symmetric
monoidal~\cite{jacobs2012coreflections}. The monoids in the category
of effect algebras resulting from this tensor product are called
effect monoids, and they can be explicitly defined as follows:

\begin{definition}\label{def:effectmonoid}
    An \Define{effect monoid} (EM) is an effect algebra $(M,\ovee, 0, ^\perp,
    \,\cdot\,)$ with an additional (total) binary operation $\,\cdot\,$,
such that the following conditions hold 
    for all~$a,b,c \in M$.
\begin{itemize}
\item Unit: $a\cdot 1 = a = 1\cdot a$.
\item Distributivity: if $b\perp c$, then $a\cdot b\perp  a\cdot c$,\quad
    $b\cdot a\perp c\cdot a$,
        $$a\cdot (b\ovee c) \ =\  (a\cdot b) \ovee (a\cdot c),\ \text{and}\ 
(b\ovee c)\cdot a \ =\  (b\cdot a) \ovee (c\cdot a).$$
Or, in other words,
    the operation~$\,\cdot\,$ is bi-additive.
\item Associativity: $a \cdot (b\cdot c) = (a \cdot b) \cdot c$.
\end{itemize}
We call an effect monoid~$M$ \Define{commutative} if $a\cdot b = b \cdot a$
    for all~$a,b \in M$;
an element $p$ of~$M$  \Define{idempotent}
whenever~$p^2 \equiv p \cdot p = p$;
elements~$a$, $b$  of~$M$ \Define{orthogonal}
when $a\cdot b = b\cdot a = 0$;
and we denote the set of idempotents of $M$ by~$P(M)$.
\end{definition}

\begin{example}\label{ex:booleanalgebra}
Any Boolean algebra  $(B,0,1,\wedge,\vee,(\ )^\perp)$,
    being an orthomodular lattice,
    is an effect algebra by Example~\ref{ex:orthomodularlattice},
and, moreover,  a commutative effect monoid with
    multiplication defined by $x \cdot y= x \wedge y$.
    Conversely,
    any orthomodular lattice
    for which~$\wedge$ distributes over~$\ovee$ (and thus~$\vee$)
    is a Boolean algebra.
\end{example}

\begin{example}\label{ex:CX}
    The unit interval $[0,1]_R$ of any (partially)
    ordered unital ring~$R$
    (in which the sum $a+b$ and product $a\cdot b$
    of positive elements~$a$ and~$b$ are again positive)
    is an effect monoid.

    Let, for example, $X$ be a compact Hausdorff space. We denote its space of
    continuous functions into the complex numbers by
    $C(X)\equiv\{f:X\rightarrow \CC~;~f \text{ continuous}\}$. This is
    a commutative unital C$^*$-algebra
    (and conversely by the Gel'fand theorem, any
    commutative C$^*$-algebra with unit is of this form) and hence its
    unit interval $[0,1]_{C(X)} = \{f:X\rightarrow [0,1]\}$ is a
    commutative effect monoid.

    In~\cite[Ex.~4.3.9]{kentathesis} and~\cite[Cor.~51]{basmaster}
        two different non-commutative effect monoids are constructed.
\end{example}

\begin{definition}
    Let $M$ and $N$ be effect monoids. A \Define{morphism} from $M$ to $N$ is
    a morphism of effect algebras with the added condition
    that~$f(a\cdot b) = f(a)\cdot f(b)$ for all~$a,b\in M$.
    Similar to the case of effect algebras,
        an \Define{embedding}~$M \to N$ is a morphism that is order reflecting.
    Also here an \Define{isomorphism} of effect monoids
        is the same thing as a surjective embedding of effect monoids.
\end{definition}

\begin{example}\label{ex:stone-embedding}
It is well-known that any Boolean algebra $B$
    is isomorphic to the set of clopens of
    its \Define{Stone space} $X_B$. This yields an effect monoid
    embedding from $B$ into~$[0,1]_{C(X_B)}$.
\end{example}

\begin{remark}
    A physical or logical theory which has probabilities of the form $[0,1]_{C(X)}$ can be seen as a theory with a natural notion of space, where probabilities are allowed to vary continuously over the space $X$. Such a spatial theory is considered in for instance Ref.~\cite{moliner2017space}.
\end{remark}

\begin{example}
    Given two effect algebras/monoids~$E_1$ and $E_2$ we define
    their \Define{direct sum} $E_1\oplus E_2$ as the Cartesian
    product with pointwise operations. This is again an effect
    algebra/monoid.
\end{example}

\begin{example}\label{cornersexample}
    Let~$M$ be an effect monoid and let~$p \in M$ be some idempotent.
    The subset~$pM \equiv \{p \cdot a; \ a\in M\}$
        is called the \Define{left corner} by~$p$
        and is an effect monoid with~$(p\cdot a)^\perp \equiv p \cdot a^\perp$
            and all other operations inherited from~$M$.
    Later we will see that~$a \mapsto (p\cdot a, p^\perp \cdot a)$
        is an isomorphism~$M \cong p M \oplus p^\perp M$.
Analogous facts hold for the \Define{right corner}~$Mp \equiv \{a \cdot p; \ a\in M\}$.
\end{example}

\begin{definition}
    Let $E$ be an effect algebra. A \Define{directed set} $S\subseteq
    E$ is a non-empty set such that for all $a,b\in S$ there exists a $c\in
    S$ such that $a,b\leq c$. $E$ is \Define{directed complete} when
    for any directed set $S$ there is a supremum $\bigvee S$.
    It is~\Define{$\omega$-complete}
        if directed suprema of countable sets exist, or equivalently if any increasing sequence $a_1\leq a_2\leq \ldots$ in $E$ has a supremum.
\end{definition}

\begin{remark}
    A \textbf{d}irected \textbf{c}omplete \textbf{p}artially \textbf{o}rdered set is often referred to by the shorthand \Define{dcpo}. These structures lie at the basis of domain theory and are often encountered when studying denotational semantics of programming languages as they allow for a natural way to talk about fix points of recursion. Note that being $\omega$-complete is strictly weaker. For effect algebras we could have equivalently defined directed completeness with respect to downwards directed sets, as the complement is an order anti-isomorphism.
\end{remark}

\begin{example}
    Let~$B$ be a $\omega$-complete Boolean algebra. 
    Then~$B$ is a $\omega$-complete effect monoid.
    If~$B$ is complete as a Boolean algebra, then~$B$
        is directed-complete as effect monoid.
\end{example}

\begin{example}
    Let $X$ be an \Define{extremally disconnected} compact Hausdorff
    space, i.e.~where the closure of every open set is  open.
    Then $[0,1]_{C(X)}$ is a directed-complete effect monoid.
    If~$X$ is a \Define{basically disconnected} \cite[1H]{gillman2013rings}
    compact Hausdorff
        space, i.e.~where every cozero set has open closure,
        then~$[0,1]_{C(X)}$ is an~$\omega$-complete effect
            monoid~\cite[3N.5]{gillman2013rings}.
\end{example}

\section{Overview}
The main results of the paper are the following theorems:

\begin{theorem*}
    Let $M$ be an $\omega$-complete effect monoid.
    Then $M$ embeds into $M_1\oplus M_2$, where
    $M_1$ is an $\omega$-complete Boolean algebra,
    and $M_2 =[0,1]_{C(X)}$, 
    where $X$ is a basically disconnected compact Hausdorff space 
    (see Theorem~\ref{thm:omega-complete-classification}).
\end{theorem*}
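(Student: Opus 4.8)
The plan is to isolate two features of $M$ that pull in opposite directions --- its \emph{sharp} (idempotent) skeleton, which will become the Boolean algebra $M_1$, and its \emph{continuous} remainder, which will become the unit interval of a commutative C$^*$-algebra $M_2$ --- and then to reassemble $M$ as an internal direct sum of a Boolean part and a convex part, split by a single central idempotent. Throughout I may not assume that $M$ is commutative, since that is one of the advertised consequences of the theorem; instead I will have to extract just enough commutativity (centrality of the idempotents) to make the decomposition go through.

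First I would develop the order theory forced by $\omega$-completeness. Bi-additivity makes multiplication monotone, so for each $a$ the sequence $1 \geq a \geq a^2 \geq \cdots$ is decreasing and, dually to the defining completeness, has an infimum $\lfloor a\rfloor := \bigwedge_{n} a^{n}$; a short computation should show that $\lfloor a\rfloor$ is idempotent and in fact the largest idempotent below $a$, with a dual \emph{ceiling} $\lceil a\rceil$ above it. This makes idempotents abundant. I would then prove that the idempotents $P(M)$ form an $\omega$-complete Boolean algebra: products of commuting idempotents are their meet, complements behave correctly, and $\wedge$ distributes over $\ovee$, so that the characterisation of Boolean algebras among orthomodular lattices in Example~\ref{ex:booleanalgebra} applies. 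The two facts that make this delicate are that idempotents commute with \emph{each other}, and that every idempotent is in fact \emph{central}; the latter is what lets me invoke the corner isomorphism $M \cong pM \oplus p^\perp M$ of Example~\ref{cornersexample} for any idempotent $p$.

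Next I would separate the Boolean and continuous parts by a central idempotent. Let $z$ be the supremum of all central idempotents $p$ for which the corner $pM$ is a Boolean algebra (i.e.\ consists only of idempotents); $\omega$-completeness should guarantee that this supremum is attained and is again such an idempotent, so that $M \cong zM \oplus z^\perp M$ with $M_1 := zM$ an $\omega$-complete Boolean algebra. It then remains to embed the ``purely continuous'' corner $N := z^\perp M$, which by construction has no nonzero Boolean summand, into $[0,1]_{C(X)}$. The key structural claim here is that $N$ is \emph{convex}: the absence of a Boolean summand together with $\omega$-completeness should force $N$ to be divisible --- one can halve elements, build an action of the dyadic rationals, and complete it using suprema to an action of the real $[0,1]$ --- so that $N$ becomes a convex effect algebra carrying a compatible associative multiplication. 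By the Gudder--Pulmanov\'a representation, $N$ is then the unit interval $[0,u]$ of an Archimedean ordered real vector space $V$, on which the multiplication extends to make $V$ a commutative, monotone $\sigma$-complete, ordered algebra with order unit. Such a $V$ is the self-adjoint part of a commutative bounded-$\omega$-complete unital C$^*$-algebra, which by Gelfand duality is $C(X)$ with $X$ basically disconnected; hence $N \cong [0,1]_{C(X)} =: M_2$.

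Finally, combining the Boolean summand $M_1 = zM$ with the representation of $N = z^\perp M$ inside $M_2$ yields an embedding $M \cong M_1 \oplus N \hookrightarrow M_1 \oplus M_2$, which is order-reflecting because each summand map is. I expect the genuine obstacle to be the convexity step: turning the purely order-theoretic input ($\omega$-completeness together with the absence of a sharp summand) into honest real scalars --- that is, proving divisibility --- is precisely the place where the real numbers are forced to appear, and it is both the technical heart and the conceptual payoff of the theorem. A secondary but still substantial difficulty is establishing centrality of the idempotents without commutativity in hand, since every corner decomposition above relies on it.
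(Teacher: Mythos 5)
Your outline breaks down at the decomposition step, and in a way the paper itself shows is unfixable. You posit a single idempotent $z$ --- the supremum of all idempotents $p$ with $pM$ Boolean --- splitting $M$ as an internal direct sum $M \cong zM \oplus z^\perp M$ with $zM$ Boolean and $z^\perp M$ convex. For merely $\omega$-complete effect monoids no such split exists. The paper's counterexample: take uncountable sets $X_1,X_2$ and let $M$ consist of the pairs $(f_1\colon X_1 \to [0,1],\, f_2 \colon X_2 \to \{0,1\})$ such that either both functions are nonzero at only countably many points, or both differ from $1$ at only countably many points. The Boolean idempotents here are the pairs $(0,\chi_U)$ with $U$ countable; their only candidate supremum is $(0,1)$, which is not an element of $M$, so the supremum you need simply does not exist ($\omega$-completeness, and the $\omega$-completeness of $P(M)$, only give \emph{countable} suprema). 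Zorn's lemma does not rescue this either: an uncountable chain of such idempotents has no upper bound in $M$, so there is not even a maximal Boolean idempotent. Your second key claim --- that absence of a Boolean summand forces divisibility --- is also false: the one-variable variant (functions $f\colon X\to[0,1]$, $X$ uncountable, that are nonzero, respectively $\neq 1$, at only countably many points) is $\omega$-complete and has no nonzero Boolean idempotent, yet $1$ cannot be halved, since the constant function $\tfrac12$ is not a member. This is exactly why the theorem asserts an \emph{embedding} rather than an isomorphism; your final line ``$M \cong M_1 \oplus N$'' is unobtainable.

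The paper's repair is to replace your single idempotent by a \emph{maximal collection} $E$ of nonzero pairwise orthogonal idempotents, built so that each $e \in E$ is either halvable or Boolean (first take a maximal orthogonal family of halvable idempotents, then extend it; Zorn applies here because orthogonal families are closed under unions of chains --- no suprema are required). Using normality of multiplication (which rests on the floor/ceiling/division machinery you correctly anticipated), one shows the only upper bound of $E$ is $1$, whence $a = \bigvee_{e\in E} a\cdot e$ and the map $a \mapsto (a\cdot e)_{e\in E}\colon M \to \bigoplus_{e\in E} Me$ is order-reflecting. Grouping the halvable corners into one summand (halvable, hence convex by the dyadic argument you sketch) and the Boolean corners into the other yields the desired, generally non-surjective, embedding $M \hookrightarrow M_1 \oplus M_2$. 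Your remaining ingredients --- floors and ceilings, $P(M)$ an $\omega$-complete Boolean algebra, centrality of idempotents, and the Gudder--Pulmanov\'a plus Yosida/Gelfand endgame identifying the convex part with $[0,1]_{C(X)}$ for basically disconnected $X$ --- do match the paper. Note also that your single-idempotent strategy essentially \emph{does} work under the stronger hypothesis of directed completeness, where arbitrary chains have suprema; that is the paper's separate theorem for directed-complete effect monoids, which indeed produces an isomorphism.
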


\begin{theorem*}
    Let $M$ be a directed-complete effect monoid.
    Then $M \cong M_1 \oplus M_2$ where $M_1$ is a
    complete Boolean algebra and $M_2=[0,1]_{C(X)}$ for some
    extremally-disconnected compact Hausdorff space~$X$
    (see Theorem~\ref{mainthmdirectedcomplete}).
\end{theorem*}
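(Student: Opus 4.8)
The plan is to combine the already-established $\omega$-complete embedding theorem with the extra strength of directed completeness in two ways: to split off a Boolean direct summand via a canonical central idempotent, and to upgrade the embedding of the remaining ``convex'' summand into $C(X)$ to a genuine \emph{isomorphism}. Since a directed-complete effect monoid is in particular $\omega$-complete, I may freely use all the structure obtained there: $M$ is commutative, it is a lattice, multiplication is normal, and a partial division is available. Using directed completeness together with the lattice and division structure, every $a\in M$ has a least idempotent above it, its \emph{ceiling} $\ceil{a}$, and the idempotents $P(M)$ form a Boolean algebra; being closed under the directed suprema of $M$, this Boolean algebra is in fact \emph{complete}.

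First I would isolate the sharp part. Call $a\in M$ \emph{doublable} when $a\perp a$ (equivalently $a\leq a^\perp$), and set
$$q \ \equiv\ \bigvee\,\{\,\ceil{a}\ ;\ a\in M,\ a\perp a\,\},$$
which exists and is idempotent by directed completeness. By Example~\ref{cornersexample} the map $x\mapsto(q^\perp x,\,qx)$ is an isomorphism $M\cong q^\perp M\oplus qM$; put $M_1\equiv q^\perp M$ and $M_2\equiv qM$. The point of the cut is that $M_1$ contains no nonzero doublable element: if $b\leq q^\perp$ and $b\perp b$, then $\ceil{b}\leq q\wedge q^\perp=0$. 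A short lattice argument then shows that every $b\in M_1$ is sharp, i.e.\ $b\wedge b^\perp=0$: apply the previous fact to $d=b\wedge b^\perp$, noting that from $d\leq b^\perp\leq d^\perp$ one gets $d\perp d$, whence $d=0$. Thus $M_1$ is an orthomodular lattice in which $\wedge$ distributes over $\ovee$, so by Example~\ref{ex:booleanalgebra} it is a Boolean algebra, and it is complete because it is a directed-complete corner of $M$.

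It remains to identify $M_2=qM$ with $[0,1]_{C(X)}$ for an extremally disconnected $X$. Here I would first manufacture a convex structure: $M_2$ is generated by its doublable elements, and using divisibility together with directed completeness to pass from dyadic to real scalars one equips $M_2$ with a scalar action of $[0,1]$. This makes $M_2$ a directed-complete \emph{convex} effect monoid, hence the unit interval of an order unit space whose multiplication turns it into a monotone complete commutative C$^*$-algebra. By the Gel'fand and Grothendieck--Dixmier theorems such an algebra is $C(X)$ with $X$ extremally disconnected, and its idempotents---the clopens of $X$---form a complete Boolean algebra, consistent with $P(M_2)$ being a directed-complete corner. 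This yields $M_2\cong[0,1]_{C(X)}$.

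The main obstacle is this last step, and specifically \emph{surjectivity}: the $\omega$-complete theorem only yields an embedding $M_2\hookrightarrow[0,1]_{C(X')}$, and one must promote it to an isomorphism onto the full unit interval of a \emph{monotone complete} algebra. This is exactly where directed completeness, rather than mere $\omega$-completeness, is indispensable: the image is closed under all suprema computed in $C(X)$, so it is a monotone-closed, point-separating, unital sub-object, and a monotone-completeness argument forces it to exhaust the entire monotone-complete hull---simultaneously forcing $X$ to be extremally disconnected rather than merely basically disconnected. Controlling these suprema, and verifying that the convex and multiplicative structures are preserved under them, is the technical heart of the proof.
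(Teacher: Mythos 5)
Your overall architecture---splitting $M$ by a single central idempotent into a sharp corner and a would-be convex corner, then identifying the two pieces---is the same as the paper's, but there is a genuine gap at the decisive step for the convex corner $M_2 = qM$. To equip $qM$ with a $[0,1]$-action you need (the paper's) Proposition~\ref{prop:dircompleteisconvex}, and that proposition requires the unit of the effect monoid to be \emph{halvable}: the entire dyadic-to-real construction of the scalar action starts from an element $c$ with $c \ovee c = 1$. Your $q$ is a supremum of halvable idempotents $\ceil{a}$ (for $a \perp a$), but a supremum of halvable idempotents is not obviously halvable: the halves of the various $\ceil{a}$ are neither canonical nor comparable, so one cannot simply take a supremum of halves, and ``generated by its doublable elements \ldots{} using divisibility'' is an assertion, not an argument. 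This is precisely why the paper proceeds differently: by Zorn's lemma (Lemma~\ref{lem:maximalsummableelement}, with directed completeness used to bound chains) it produces a \emph{maximal} self-summable element $a$; then $a \ovee a$ is an idempotent that is halvable \emph{by construction}, every element under $(a\ovee a)^\perp$ is idempotent, and the splitting is $M \cong (a\ovee a)^\perp M \oplus (a \ovee a) M$ via Corollary~\ref{cor:corneriso}. (One can check that your $q$ in fact equals this $a \ovee a$, but that verification itself seems to need the maximality argument, so defining $q$ as a supremum of ceilings does not avoid the work.)

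Two further points. For $M_1 = q^\perp M$, your detour through sharpness asserts facts that do not follow from $b \wedge b^\perp = 0$ alone (``orthomodular lattice in which $\wedge$ distributes over $\ovee$'' is unproven as stated); the effect-monoid fix is immediate and bypasses lattice theory entirely: for $b \leq q^\perp$ the element $b \cdot b^\perp$ is self-summable (Lemma~\ref{lem:selfsummable}) and lies under $q^\perp$, hence is zero by your own observation, so every $b \in M_1$ is idempotent (Lemma~\ref{lem:idempotentiff}) and $M_1 = P(M_1)$ is a Boolean algebra by Proposition~\ref{prop:booleanalgebra}. Finally, your closing paragraph misidentifies where directed completeness is actually needed. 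There is no surjectivity obstacle in the convex corner: Gudder's representation (Theorem~\ref{prop:convextotalisation}) realises a convex effect algebra as the \emph{full} unit interval $[0,1]_V$ of an order unit space, and Yosida/Gel'fand then gives an order isomorphism $V \cong C(X)$ outright (Theorem~\ref{thm:convexextremallydisconnected}); directed completeness of $M_2$ enters only to make $V$ bounded directed complete, i.e.\ to make $X$ extremally rather than basically disconnected. The reason the $\omega$-complete classification yields merely an embedding lies in the \emph{decomposition} step, not the representation step: there one must use a possibly infinite maximal orthogonal family of idempotents, and $M \to \bigoplus_{e} eM$ can fail to be surjective (the paper's example with two uncountable sets), whereas directed completeness collapses the family to a single idempotent and makes the splitting an isomorphism.
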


By Example~\ref{ex:stone-embedding}, the Boolean algebra $M_1$ also embeds into a $[0,1]_{C(X_{M_1})}$, and hence we could `coarse-grain' the direct sums above and say that any $\omega$-complete effect monoid embeds into the unit interval of $C(X_{M_1} + X)$, where $+$ is the disjoint union of the topological spaces. 
This observation suggests a Stone-type duality that we discuss in more detail in the conclusion.

Other results for an $\omega$-complete effect monoid $M$ that either follow directly from the above theorems, or are proven 
along the way are the following:
\begin{itemize}
	\item $M$ is a lattice.
	\item $M$ is an \emph{effect divisoid}~\cite{basthesis}.
	\item The multiplication in $M$ is \emph{normal}: $a\cdot \bigvee S = \bigvee a\cdot S$.
	\item If~$M$ is convex (as an effect algebra),
            then scalar multiplication is \emph{homogeneous}:
        $\lambda (a \cdot b) = (\lambda a) \cdot b = a \cdot(\lambda b)$
            for any~$\lambda \in [0,1]$ and~$a,b \in M$.
    \item $M$ is commutative.
    \item If $M$ has no non-trivial zero-divisors (i.e.~$a\cdot b = 0$, 
    implies $a=0$ or $b=0$), then $M$ is isomorphic to $[0,1]$, $\{0,1\}$ or $\{0\}$.
\end{itemize}

It should be noted that the scalars in a $\omega$-effectus satisfying \emph{normalisation} have no non-trivial zero-divisors~\cite{kentathesis} and hence using the last point above, we have completely characterised the scalars in such $\omega$-effectuses, splitting them up into trivial, Boolean and convex effectuses.

The paper is structured as follows. In Section~\ref{sec:basicresults} we recover and prove some basic results regarding effect algebras/monoids. 
Then in Section~\ref{sec:floorceiling} we will show that in any $\omega$-complete effect monoid~$M$,
we can define a kind of partial division operation which turns it
into a \emph{effect divisoid}. 
Using this division we show that the multiplication must be normal.
Then in Section~\ref{sec:boolhalvesconvexity} we study idempotents that are either \emph{Boolean}, meaning that all elements below
$p$ must also be idempotents, or \emph{halvable}, meaning that there is an $a\in M$
such that $p=a\ovee a$.  
We establish that an $\omega$-complete 
effect monoid where $1$ is Boolean must
be a Boolean algebra, while if $1$ is halvable then it must be \emph{convex}.
In Section~\ref{sec:embeddingtheorem} we show that a maximal collection
of orthogonal idempotents of~$M$ can be found that consists of a mix
of halvable and Boolean idempotents. 
The corner $pM$ associated to such an idempotent
will either be convex (if~$p$ is halvable)
or Boolean (if~$p$ is Boolean).
Using normality of multiplication we show that $M$ embeds
into the direct sum of the corners associated to these idempotents.
Letting~$M_1$ be the direct sum of the Boolean corners,
and~$M_2$ be the direct sum of the convex corners,
we see that~$M$ embeds into~$M_1\oplus M_2$,
where~$M_1$ is Boolean and~$M_2$ is convex.
In Section~\ref{sec:OUS}, we recall some results regarding \emph{order unit spaces}
and use Yosida's representation theorem to show that a convex 
$\omega$-complete effect monoid must be isomorphic to 
the unit interval of a $C(X)$.
Then in Section~\ref{sec:maintheorems} we collect all the results and prove our main theorems. 
Finally in Section~\ref{sec:conclusion} we conclude and discuss some future work and open questions.


\section{Basic results}\label{sec:basicresults}

We do not assume any commutativity of the product in an effect
monoid. Nevertheless, some commutativity comes for free.

\begin{lemma}\label{lem:ssperpcommute}
    For any~$a \in M$ in an effect monoid~$M$,
        we have~$a\cdot a^\perp = a^\perp \cdot a$.
\end{lemma}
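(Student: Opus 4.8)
The plan is to exploit the bi-additivity of the multiplication together with the two unit laws~$a\cdot 1 = a$ and~$1\cdot a = a$, combined with the defining decomposition~$1 = a \ovee a^\perp$. The guiding idea is that both products~$a\cdot a^\perp$ and~$a^\perp\cdot a$ can be recovered as one and the same partial difference~$a \ominus (a\cdot a)$, forcing them to coincide by uniqueness of~$\ominus$.

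First I would expand the unit law~$a\cdot 1 = a$ by distributing over~$1 = a \ovee a^\perp$. Since~$a \perp a^\perp$, distributivity yields both~$a\cdot a \perp a\cdot a^\perp$ and the identity~$a = a\cdot 1 = a\cdot(a\ovee a^\perp) = (a\cdot a)\ovee(a\cdot a^\perp)$. In particular this exhibits~$a\cdot a \leq a$, and by uniqueness of the element~$z$ with~$(a\cdot a)\ovee z = a$ we conclude~$a\cdot a^\perp = a \ominus (a\cdot a)$. The symmetric computation on the other side gives~$a = 1\cdot a = (a\ovee a^\perp)\cdot a = (a\cdot a)\ovee(a^\perp\cdot a)$, whence likewise~$a^\perp\cdot a = a \ominus (a\cdot a)$.

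Since the difference~$a \ominus (a\cdot a)$ is, by the effect-algebra axioms, the unique solution~$z$ of~$(a\cdot a)\ovee z = a$, both~$a\cdot a^\perp$ and~$a^\perp\cdot a$ equal this single element, and therefore they are equal to each other. There is essentially no real obstacle in this argument: the only point worth flagging is the need to apply the unit law on \emph{both} the left and the right before reading off both products as the same difference~$a\ominus(a\cdot a)$; aside from that, everything is a direct invocation of the distributivity axiom of Definition~\ref{def:effectmonoid} and the uniqueness of~$\ominus$ recorded in the preliminaries.
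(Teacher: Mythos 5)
Your proof is correct and is essentially the paper's own argument: both expand the unit law on each side over $1 = a \ovee a^\perp$ to get $a = (a\cdot a)\ovee(a\cdot a^\perp) = (a\cdot a)\ovee(a^\perp\cdot a)$, and then cancel $a^2$ (which is exactly the uniqueness of $\ominus$ you invoke). No gap; the phrasing via $a \ominus (a\cdot a)$ is just a different way of stating the cancellation step.
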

\begin{proof}
    $a^2 \ovee  (a^\perp
    \cdot a) = (a\ovee a^\perp) \cdot a  = 1\cdot a = a = a\cdot 1 = a \cdot (a\ovee a^\perp) = a^2 \ovee  (a
    \cdot a^\perp)$. Cancelling $a^2$ on both sides gives the desired
    equality.
\end{proof}

\begin{lemma}
    \label{lem:idempotentiff}
    An element $p\in M$ is an idempotent if and only if $p\cdot p^\perp = 0$.
\end{lemma}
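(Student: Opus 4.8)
The plan is to establish both implications from a single distributivity computation. The central observation is that since $1 = p \ovee p^\perp$, applying left-multiplication by $p$ and the distributivity axiom of an effect monoid yields
\[
    p \ =\ p \cdot 1 \ =\ p \cdot (p \ovee p^\perp) \ =\ p^2 \ovee (p \cdot p^\perp).
\]
Note that the distributivity axiom also guarantees that the sum $p^2 \ovee (p \cdot p^\perp)$ is indeed defined, so there are no well-definedness issues to check.

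For the backward direction, suppose $p \cdot p^\perp = 0$. Substituting into the displayed identity and using the zero axiom $p^2 \ovee 0 = p^2$ immediately gives $p = p^2$, so $p$ is idempotent. For the forward direction, suppose $p$ is idempotent, i.e.\ $p^2 = p$. Then the identity reads $p = p \ovee (p \cdot p^\perp)$, and since also $p = p \ovee 0$ by the zero axiom, the uniqueness of the element $z$ satisfying $p \ovee z = p$ (the cancellation property recorded in the effect-algebra definition, where such $z$ equals $p \ominus p$) forces $p \cdot p^\perp = 0$.

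The only genuine tool beyond distributivity is this cancellation/uniqueness property of effect algebras, which I would invoke directly from the preliminaries; no $\omega$-completeness or commutativity is needed. I do not anticipate a real obstacle here, as the entire argument rests on the one decomposition of $p$ above. The only point requiring slight care is to ensure I use the \emph{left} factorisation $p\cdot(p\ovee p^\perp)$ consistently (a symmetric argument with $(p\ovee p^\perp)\cdot p$ would equally work and, combined with Lemma~\ref{lem:ssperpcommute}, shows the choice is immaterial), so that the conclusion $p\cdot p^\perp = 0$ matches the stated condition.
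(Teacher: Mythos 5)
Your proof is correct and is essentially identical to the paper's: the same decomposition $p = p\cdot 1 = p\cdot(p\ovee p^\perp) = p^2 \ovee (p\cdot p^\perp)$, with the paper leaving the cancellation step (which you justify via uniqueness of $p\ominus p$) implicit in its ``hence \ldots\ if and only if''. No gaps; your extra care about left versus right multiplication is fine but not needed for the statement as given.
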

\begin{proof}
    $p = p\cdot 1 = p\cdot (p\ovee p^\perp) = p^2 \ovee p\cdot p^\perp$. Hence $p=p^2$ if and only if $p\cdot p^\perp = 0$.
\end{proof}

\begin{lemma}\label{lem:preserveunderidempotent}
    For~$a, p \in M$ with~$p^2=p$, we have
    \begin{equation*}
            p \cdot a \ =\  a \quad \iff \quad
            a \cdot p \ =\  a \quad \iff \quad
            a \ \leq\  p.
    \end{equation*}
\end{lemma}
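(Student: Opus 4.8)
The plan is to prove the three conditions equivalent via a short cycle of implications, exploiting the left--right symmetry so that only two genuinely different arguments are needed. I would first isolate the two ingredients that do all the work. The first is \emph{monotonicity} of multiplication: since~$b \leq c$ gives~$c = b \ovee (c \ominus b)$, bi-additivity yields~$p \cdot c = (p \cdot b) \ovee (p \cdot (c \ominus b))$, whence~$p \cdot b \leq p \cdot c$, and symmetrically on the right. The second is that for an idempotent~$p$ we have~$p \cdot p^\perp = p^\perp \cdot p = 0$: the vanishing is Lemma~\ref{lem:idempotentiff}, and the coincidence of the two products is Lemma~\ref{lem:ssperpcommute} applied to~$a = p$.

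With monotonicity in hand, the implications~$p \cdot a = a \implies a \leq p$ and~$a \cdot p = a \implies a \leq p$ are immediate: from~$p \cdot a = a$ we read off~$a = p \cdot a \leq p \cdot 1 = p$, and dually for the right-hand condition. The remaining content is the reverse direction~$a \leq p \implies p \cdot a = a$ (together with its mirror~$a \cdot p = a$), which is the step I expect to carry the weight. Here I would first decompose~$a$ along~$p$: since~$1 = p \ovee p^\perp$, bi-additivity gives~$a = 1 \cdot a = (p \cdot a) \ovee (p^\perp \cdot a)$, so it suffices to prove~$p^\perp \cdot a = 0$. To obtain this, use~$a \leq p$ to write~$p = a \ovee c$ with~$c = p \ominus a$, and compute~$0 = p^\perp \cdot p = (p^\perp \cdot a) \ovee (p^\perp \cdot c)$ via the idempotent fact above. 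Since a sum equal to~$0$ in an effect algebra forces both summands to be~$0$, we conclude~$p^\perp \cdot a = 0$ and hence~$p \cdot a = a$; the claim~$a \cdot p = a$ follows from the same computation performed with multiplication on the other side (now starting from~$a = a \cdot 1 = (a \cdot p) \ovee (a \cdot p^\perp)$ and using~$p \cdot p^\perp = 0$).

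The only mildly delicate point --- the ``main obstacle'' such as it is --- is keeping track of which one-sided distributivity, and which of the two equal products~$p \cdot p^\perp$ and~$p^\perp \cdot p$, one needs at each step, since~$M$ is not assumed commutative: the left argument invokes~$p^\perp \cdot p = 0$ while its mirror invokes~$p \cdot p^\perp = 0$, and it is precisely Lemma~\ref{lem:ssperpcommute} that makes the two sides match up. Everything else reduces to standard effect-algebra facts (namely~$x \ovee y = 0 \implies x = y = 0$, and cancellativity) together with bi-additivity, so I do not anticipate any real difficulty beyond this bookkeeping.
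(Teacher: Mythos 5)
Your proposal is correct and takes essentially the same route as the paper's proof: both rest on bi-additivity plus the facts $p\cdot p^\perp = p^\perp\cdot p = 0$ (Lemmas~\ref{lem:idempotentiff} and~\ref{lem:ssperpcommute}), obtain $p\cdot a = a \implies a\leq p$ from $a = p\cdot a \leq p\cdot 1 = p$, and get the converse by decomposing $a = (p\cdot a)\ovee(p^\perp\cdot a)$ and showing the second summand vanishes. The only cosmetic difference is that the paper derives $a\cdot p^\perp = 0$ by monotonicity ($a\cdot p^\perp \leq p\cdot p^\perp = 0$), whereas you inline that same argument by distributing $p^\perp$ over $p = a \ovee (p\ominus a)$ and using that summands of $0$ are $0$.
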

\begin{proof}
Assume $a\leq p$.
Then $a\cdot p^\perp \leq p\cdot p^\perp = 0$, so
    that~$a \cdot p^\perp = 0$. Similarly $p^\perp \cdot a = 0$.
     Hence~$a=a\cdot 1=a\cdot (p\ovee p^\perp) = a\cdot p \ovee a \cdot p^\perp
        = a\cdot p$. Similarly~$p \cdot a = a$.

Now assume~$p \cdot a = a$.
Then immediately~$a = p\cdot a \leq p \cdot 1 = p$.
    The final implication (that~$a \cdot p = a \implies a \leq p$) is proven
    similarly.
\end{proof}

\begin{lemma}\label{lem:idempotentscommute}
    Let $M$ be an effect monoid with idempotent~$p \in M$.
Then~$p\cdot a = a \cdot p$ for any~$a \in M$.
\end{lemma}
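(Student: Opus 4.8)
The plan is to reduce everything to the two-sided compression $p \cdot a \cdot p$ by showing that the ``off-diagonal'' terms vanish. First I would record the one structural fact that makes this work: multiplication is monotone in each argument. This is immediate from bi-additivity (Definition~\ref{def:effectmonoid}): if $b \leq c$, write $c = b \ovee (c \ominus b)$, so that $a \cdot c = (a \cdot b) \ovee \bigl(a \cdot (c \ominus b)\bigr)$, whence $a\cdot b \leq a \cdot c$, and symmetrically for right multiplication.

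Next I would collect the two annihilation identities for the idempotent. Since $p^2 = p$, Lemma~\ref{lem:idempotentiff} gives $p \cdot p^\perp = 0$, and then Lemma~\ref{lem:ssperpcommute} (applied to $a = p$) yields $p^\perp \cdot p = p \cdot p^\perp = 0$ as well. Using monotonicity together with $a \leq 1$, I would then argue that the cross terms disappear: from $p \cdot a \leq p \cdot 1 = p$ we get $p \cdot a \cdot p^\perp \leq p \cdot p^\perp = 0$, so $p \cdot a \cdot p^\perp = 0$; and from $a \cdot p \leq 1 \cdot p = p$ we get $p^\perp \cdot a \cdot p \leq p^\perp \cdot p = 0$, so $p^\perp \cdot a \cdot p = 0$.

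Finally I would expand both products by inserting the decomposition $1 = p \ovee p^\perp$ on the free side and distributing:
\begin{align*}
    p \cdot a &= p \cdot a \cdot (p \ovee p^\perp) = (p \cdot a \cdot p) \ovee (p \cdot a \cdot p^\perp) = p \cdot a \cdot p, \\
    a \cdot p &= (p \ovee p^\perp) \cdot a \cdot p = (p \cdot a \cdot p) \ovee (p^\perp \cdot a \cdot p) = p \cdot a \cdot p,
\end{align*}
so that $p \cdot a = p \cdot a \cdot p = a \cdot p$, as desired.

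I do not expect a genuine obstacle here; the only point requiring care is justifying that multiplication respects the order, since it is this monotonicity — rather than any commutativity, which we are precisely trying to establish — that forces the two cross terms to lie below $p \cdot p^\perp = p^\perp \cdot p = 0$. Once that is in place, the argument is a short distributivity computation.
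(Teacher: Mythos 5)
Your proof is correct and takes essentially the same route as the paper: both arguments reduce $p\cdot a$ and $a\cdot p$ to the two-sided compression $p\cdot a\cdot p$, starting from the observations $p\cdot a\leq p\cdot 1 = p$ and $a\cdot p\leq 1\cdot p = p$. The only difference is presentational --- the paper invokes Lemma~\ref{lem:preserveunderidempotent} for that step, whereas you inline its proof by showing the cross terms $p\cdot a\cdot p^\perp$ and $p^\perp\cdot a\cdot p$ vanish and then distributing over $1 = p\ovee p^\perp$.
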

\begin{proof}
Clearly~$p\cdot a \leq p \cdot 1 = p$
    and so by Lemma~\ref{lem:preserveunderidempotent}
    $p \cdot a \cdot p = a \cdot p$.
    Similarly~$a \cdot p \leq p$ and so~$p \cdot a \cdot p = p \cdot a$.
    Thus~$p\cdot a = p\cdot a\cdot p = a \cdot p$, as desired.
\end{proof}
\begin{corollary}\label{cor:corneriso}
    Let~$M$ be an effect monoid with idempotent~${p \in M}$.
    The map~$e \mapsto (p \cdot e, p^\perp \cdot e)$
    is an isomorphism~$M \cong pM \oplus p^\perp M$.
\end{corollary}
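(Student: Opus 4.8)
The plan is to show that $\Phi\colon M \to pM \oplus p^\perp M$ defined by $\Phi(e) = (p\cdot e,\, p^\perp \cdot e)$ is a bijective morphism of effect monoids whose inverse is also a morphism; equivalently (by the remark after Definition~\ref{def:effectmonoid}) that it is a surjective embedding. The crucial technical fact I will lean on is Lemma~\ref{lem:idempotentscommute}, which tells us that $p$ commutes with every element of $M$, so that the left corner $pM$ and the right corner $Mp$ coincide, and the two components of $\Phi$ really do decompose $e$.

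First I would verify that $\Phi$ is well-defined and a morphism of effect monoids. That it lands in $pM \oplus p^\perp M$ is immediate from the definition of the corners. To see it preserves the unit, note $\Phi(1) = (p\cdot 1, p^\perp \cdot 1) = (p, p^\perp)$, which is exactly the unit of the direct sum $pM \oplus p^\perp M$ (recall that in Example~\ref{cornersexample} the top element of $pM$ is $p$ and its complement operation is $(p\cdot a)^\perp = p\cdot a^\perp$). Preservation of sums follows from bi-additivity of the multiplication: if $d \perp e$ in $M$ then $p\cdot d \perp p\cdot e$ and $p\cdot(d\ovee e) = p\cdot d \ovee p\cdot e$, and likewise for the $p^\perp$ component, so $\Phi(d\ovee e) = \Phi(d) \ovee \Phi(e)$ computed coordinatewise. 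For multiplicativity I would use that $p$ is central and idempotent: $p\cdot(d\cdot e) = (p\cdot d)\cdot(p\cdot e)$ because $p\cdot d\cdot e = p\cdot d\cdot p\cdot e$ (inserting $p = p^2$ and commuting the middle $p$ past $d$ via Lemma~\ref{lem:idempotentscommute}), and the analogous identity holds for $p^\perp$, which is also idempotent with $p^\perp \cdot p = 0$; the multiplication in the direct sum being coordinatewise, this gives $\Phi(d\cdot e) = \Phi(d)\cdot\Phi(e)$.

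Next I would construct the inverse. The natural candidate is $\Psi\colon pM \oplus p^\perp M \to M$ given by $\Psi(x,y) = x \ovee y$. This sum is defined: for $x \in pM$ and $y \in p^\perp M$ we have $x \leq p$ and $y \leq p^\perp$, so $x \perp y$ since $x \leq p = (p^\perp)^\perp \leq y^\perp$. To check $\Psi \circ \Phi = \mathrm{id}_M$, compute $\Psi(\Phi(e)) = p\cdot e \ovee p^\perp\cdot e = (p \ovee p^\perp)\cdot e = 1\cdot e = e$ using right-distributivity. For $\Phi\circ\Psi = \mathrm{id}$, take $(x,y)$ with $x\leq p$, $y\leq p^\perp$ and compute $\Phi(x\ovee y) = (p\cdot(x\ovee y),\, p^\perp\cdot(x\ovee y))$; here $p\cdot x = x$ and $p\cdot y = 0$ by Lemma~\ref{lem:preserveunderidempotent} (since $x\leq p$ and $y \leq p^\perp$, so $p\cdot y \leq p\cdot p^\perp = 0$), giving first coordinate $x$, and symmetrically the second coordinate is $y$.

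I expect the main obstacle to be bookkeeping around multiplicativity and around the complement in the corner rather than any deep difficulty: one must be careful that the effect-monoid structure asserted on $pM$ in Example~\ref{cornersexample} (in particular the nonstandard complement $(p\cdot a)^\perp = p\cdot a^\perp$) is the one making $\Phi$ multiplicative and unital, and that centrality of $p$ is genuinely needed to split products across the two coordinates. Once these identities are in hand, the statement follows since a bijective morphism of effect monoids with morphism inverse is an isomorphism.
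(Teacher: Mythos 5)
Your construction is the one the paper intends: the corollary is stated without proof as an immediate consequence of Lemma~\ref{lem:idempotentscommute}, and your verifications (centrality of $p$ for multiplicativity, bi-additivity for sums, Lemma~\ref{lem:preserveunderidempotent} and $p\cdot p^\perp=0$ for the inverse computation) are exactly the routine checks being left to the reader. All the identities you prove are correct.

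There is, however, one unfinished step, and it sits precisely at the point the paper is careful about. You conclude that the statement follows ``since a bijective morphism of effect monoids with morphism inverse is an isomorphism'', but you have only shown that $\Psi(x,y)=x\ovee y$ is a two-sided \emph{set-theoretic} inverse of $\Phi$; you never verify that $\Psi$ is itself a morphism (equivalently, that $\Phi$ is order reflecting). For effect algebras this is not automatic: a bijective morphism need not be an isomorphism, because the inverse can fail to preserve the partial sum --- this is exactly why the paper defines embeddings via order reflection and notes that an isomorphism is a surjective embedding. The missing check is short. If $(x,y)\perp(x',y')$ in $pM\oplus p^\perp M$, then $x\ovee x'\leq p$ and $y\ovee y'\leq p^\perp$ by Lemma~\ref{lem:summableunderidempotent}, hence $(x\ovee x')\perp(y\ovee y')$ since $x\ovee x'\leq p=(p^\perp)^\perp\leq (y\ovee y')^\perp$; so the four-fold sum exists and regrouping gives $\Psi(x,y)\perp\Psi(x',y')$ and $\Psi\bigl((x,y)\ovee(x',y')\bigr)=\Psi(x,y)\ovee\Psi(x',y')$. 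Alternatively, one can show $\Phi$ is order reflecting directly: if $\Phi(d)\leq\Phi(e)$, write $p\cdot e=p\cdot d\ovee a$ and $p^\perp\cdot e=p^\perp\cdot d\ovee b$; then $e=p\cdot e\ovee p^\perp\cdot e=(p\cdot d\ovee p^\perp\cdot d)\ovee(a\ovee b)=d\ovee(a\ovee b)$, so $d\leq e$. (Multiplicativity and unitality of $\Psi$, by contrast, do follow formally from $\Psi=\Phi^{-1}$, since those operations are total.) With either of these additions your proof is complete.
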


The following two lemmas are simple observations
that will be used several times.
\begin{lemma}
\label{lem:forcing}
Let~$a\leq b$ be elements of an effect algebra~$E$.
If~$ b\ovee b'\leq a\ovee a'$ for some~$a'\leq b'$ from~$E$,
    then~$a=b$ (and~$a'=b'$).
\end{lemma}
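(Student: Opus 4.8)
The plan is to exploit the cancellativity of effect algebras together with antisymmetry of the order. Since $a \leq b$ and $a' \leq b'$, I would first introduce the witnesses $c \equiv b \ominus a$ and $c' \equiv b' \ominus a'$, so that $b = a \ovee c$ and $b' = a' \ovee c'$, with $c, c'$ uniquely determined by the definition of $\ominus$.

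The central step is to rewrite the defined sum $b \ovee b'$. Substituting the decompositions gives $b \ovee b' = (a \ovee c) \ovee (a' \ovee c')$, and using commutativity and associativity of the partial sum I would reassociate this as $(a \ovee a') \ovee (c \ovee c')$. Here one must check that each intermediate sum occurring in the rearrangement is actually defined; this follows from the fact that $b \ovee b'$ is defined (so its ``sub-sums'' are defined) via repeated application of the associativity axiom. This computation simultaneously shows that $a \ovee a'$ is defined and that $a \ovee a' \leq b \ovee b'$, with witness $c \ovee c'$.

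Combining $a \ovee a' \leq b \ovee b'$ with the hypothesis $b \ovee b' \leq a \ovee a'$, antisymmetry of the poset order yields $a \ovee a' = b \ovee b'$. Now I would invoke cancellation: from $(a \ovee a') \ovee (c \ovee c') = b \ovee b' = a \ovee a' = (a \ovee a') \ovee 0$ and the uniqueness of the element summing with $a \ovee a'$ to a given value, it follows that $c \ovee c' = 0$. Finally, $c \leq c \ovee c' = 0$ forces $c = 0$, and likewise $c' = 0$, so that $a = a \ovee 0 = a \ovee c = b$ and similarly $a' = b'$.

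The only delicate point is the bookkeeping in the reassociation step, namely justifying that the sum $(a \ovee a') \ovee (c \ovee c')$ is well-defined purely from the definedness of $b \ovee b'$; everything else is a direct appeal to antisymmetry and cancellativity. I expect this partial-operation bookkeeping to be the main (though routine) obstacle.
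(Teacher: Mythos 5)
Your proof is correct and follows essentially the same route as the paper's: both decompose via $b\ominus a$ and $b'\ominus a'$, use antisymmetry to get $a\ovee a' = b\ovee b'$, and conclude that $(b\ominus a)\ovee(b'\ominus a')=0$ forces both differences to vanish. The only difference is that you spell out the reassociation and cancellation bookkeeping that the paper leaves implicit.
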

\begin{proof}
    Since~$a\leq a'$ and~$b\leq b'$,
    we have~$a\ovee a'\leq b\ovee b'$,
    and so~$a\ovee a' = b\ovee b'$.
Then  $0 = (b\ovee b') \ominus (a\ovee a')=
    (b\ominus a)\ovee (b'\ominus a')$,
    yielding $b\ominus a= 0$
    and $b'\ominus a'=0$,
so~$b=a$ and~$b'=a'$.
\end{proof}

\begin{lemma}\label{lem:summableunderidempotent}
Let~$p$ be an idempotent from an effect monoid~$M$,
    and let~$a,b\leq p$ be elements below~$p$.
If~$a\ovee b$ exists,
then~$a\ovee b\leq p$.
\end{lemma}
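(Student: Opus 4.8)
The plan is to reduce everything to Lemma~\ref{lem:preserveunderidempotent}, which tells us that for an idempotent $p$ the relation $x \leq p$ is equivalent to $p \cdot x = x$. So first I would translate the hypotheses $a \leq p$ and $b \leq p$ into the multiplicative equations $p \cdot a = a$ and $p \cdot b = b$.

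Next I would multiply the sum $a \ovee b$ on the left by $p$. Since $a \ovee b$ is assumed to exist we have $a \perp b$, which is exactly the side condition needed to invoke distributivity (bi-additivity) of the product from Definition~\ref{def:effectmonoid}. This gives $p \cdot a \perp p \cdot b$ and
\begin{equation*}
    p \cdot (a \ovee b) \ =\ (p \cdot a) \ovee (p \cdot b) \ =\ a \ovee b,
\end{equation*}
where the last equality substitutes the two equations from the previous step.

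Finally I would apply Lemma~\ref{lem:preserveunderidempotent} in the other direction: having shown $p \cdot (a \ovee b) = a \ovee b$, we conclude immediately that $a \ovee b \leq p$, as desired.

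I do not expect any real obstacle here: the statement is a direct three-line computation once the correspondence between $\leq p$ and fixedness under multiplication by $p$ is in hand. The only point requiring a moment's care is checking that distributivity is actually applicable, but this is guaranteed precisely by the hypothesis that $a \ovee b$ is defined (so $a \perp b$). Note also that the argument is symmetric in left and right multiplication, and indeed since $p$ is idempotent it is central by Lemma~\ref{lem:idempotentscommute}, so either side works equally well.
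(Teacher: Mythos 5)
Your proof is correct and takes essentially the same approach as the paper's: both multiply the sum through using distributivity (justified by $a \perp b$) and then conclude with Lemma~\ref{lem:preserveunderidempotent}. The only cosmetic difference is that the paper multiplies by $p^\perp$, deriving $(a \ovee b)\cdot p^\perp = 0$ from $a \cdot p^\perp = b \cdot p^\perp = 0$, whereas you multiply by $p$ and use the fixed-point characterisation $p \cdot x = x$ directly; the two are interchangeable.
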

\begin{proof}
    Since $a\leq p$, we have $a \cdot p^\perp = 0$,
    and similarly, $b\cdot p^\perp=0$.
    But then $(a\ovee b)\cdot p^\perp
    = 0$, and hence $(a \ovee b)\cdot p = a\ovee b$. By
    Lemma~\ref{lem:preserveunderidempotent} we then have $a\ovee b\leq
    p$.
\end{proof}

We defined directed set to mean upwards directed. Using the fact
that $a \mapsto a^\perp$ is an order anti-isomorphism, a directed-complete
effect algebra also has all infima of downwards directed (or `filtered') 
sets (and similarly for countable infima in a $\omega$-complete effect algebra).

Recall that given an element~$a$ of an
ordered group~$G$
a subset~$S$ of~$G$ has a supremum $\bigvee S$ in~$G$
if and only if $\bigvee_{s\in S} a+s$ exists,
which follows immediately from 
the observation that~$a+(\ )\colon G\to G$ is an order isomorphism.
For effect algebras
the situation is a bit more complicated,
and we only have the implications
mentioned in the lemma below.
We will see in Corollary~\ref{cor:sumomegaem}
that the situation improves
somewhat
for $\omega$-complete effect monoids.
\begin{lemma}\label{lem:additionisnormal}
Let~$x$ be an element and $S$ a non-empty
subset of an effect algebra~$E$.
    If $S\subseteq [0,x^\perp]_E$, then
\begin{alignat*}{3}
    \text{$\bigvee S$ exists}
    \quad&\implies\quad&
    x\ovee \bigvee S &\,=\,
    \bigvee x\ovee S\text{, and}
    \\
    \text{$\bigwedge x\ovee S$ exists}
    \quad&\implies\quad&
x\ovee \bigwedge S &\,=\,
    \bigwedge x\ovee S \text{.}
    \\
    \intertext{Here ``$=$'' means also that the 
    sums, suprema and infima on either side exist.
    Similarly, if $S\subseteq [x,1]_E$, then}
    \text{$\bigwedge_{s\in S} s\ominus x$ exists}
    \quad&\implies\quad&
\bigl(\bigvee S\bigr) \ominus x
     &\,=\, 
    \bigvee_{s\in S} s \ominus x\text{, and}
    \\
    \text{$\bigwedge S$ exists}
    \quad&\implies\quad&
\bigl(\bigwedge S\bigr) \ominus x
     &\,=\, 
    \bigwedge_{s\in S} s \ominus x\text{.}
    \\
\intertext{Moreover, if $S\subseteq[0,x]_E$, then}
    \text{$\bigvee S$ exists}
    \quad&\implies\quad&
x \ominus \bigvee S
     &\,=\,
    \bigwedge x\ominus S\text{, and}
    \\
    \text{$\bigvee x \ominus S$ exists}
    \quad&\implies\quad&
x \ominus \bigwedge S
     &\,=\, 
    \bigvee x\ominus S\text{.}
\end{alignat*}
\end{lemma}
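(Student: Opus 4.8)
The plan is to prove all six implications by a single unifying strategy: translate every statement about suprema/infima in the effect algebra $E$ into a statement about suprema/infima in the intervals where the relevant partial operations become total and order-preserving. The key observation is that on a suitable sub-interval, the maps $x \ovee (\ )$, $(\ )\ominus x$, and $x \ominus (\ )$ are order isomorphisms or order anti-isomorphisms, and such maps transport suprema to suprema (or suprema to infima, respectively). So the engine of the whole proof is the elementary fact that an order isomorphism preserves whatever suprema/infima exist on its domain, and an order anti-isomorphism swaps them.

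\textbf{Setting up the order isomorphisms.} First I would record the three relevant bijections and their variance. For the first block, where $S \subseteq [0,x^\perp]_E$, I would check that the map $\varphi \colon [0,x^\perp]_E \to [x,1]_E$ given by $a \mapsto x \ovee a$ is an order isomorphism, with inverse $b \mapsto b \ominus x$. (Monotonicity of $\varphi$ is immediate from the definition of $\leq$; that $\varphi$ is a bijection onto $[x,1]_E$ and that its inverse is monotone follow from the uniqueness of $\ominus$ and the cancellation available in effect algebras.) For the last block, where $S \subseteq [0,x]_E$, the relevant map is $\psi \colon [0,x]_E \to [0,x]_E$, $a \mapsto x \ominus a$, which is an order \emph{anti}-isomorphism and is its own inverse. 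The middle block, with $S \subseteq [x,1]_E$, is handled by the inverse isomorphism $b \mapsto b \ominus x$ of the first map.

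\textbf{Deriving each implication.} With these maps in hand, each of the six lines becomes a direct application. For the first line, since $S \subseteq [0,x^\perp]_E$ and $\bigvee S$ exists with $\bigvee S \in [0,x^\perp]_E$ (the interval is downward closed, so the supremum stays inside), applying the order isomorphism $\varphi$ sends $\bigvee S$ to $\bigvee \varphi(S) = \bigvee (x \ovee S)$, giving $x \ovee \bigvee S = \bigvee x \ovee S$ and simultaneously the existence of the right-hand supremum. The second line is the dual statement obtained by running $\varphi^{-1}$ on the hypothesis that $\bigwedge x \ovee S$ exists. The third and fourth lines are literally the same two facts viewed through $\varphi^{-1}$ on $[x,1]_E$. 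The fifth and sixth lines use that the anti-isomorphism $\psi$ converts the supremum $\bigvee S$ into the infimum $\bigwedge \psi(S) = \bigwedge x \ominus S$, and conversely.

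\textbf{Main obstacle.} The only genuinely delicate point is the bookkeeping about \emph{existence} and about whether a supremum computed in a sub-interval coincides with the supremum computed in all of $E$. Because each interval $[0,x^\perp]_E$, $[x,1]_E$, $[0,x]_E$ is order-convex (indeed an up-set or down-set as appropriate), any bound for $S$ inside $E$ automatically lies in the interval once $S$ does, so suprema/infima taken in the interval agree with those in $E$; I would state this compatibility explicitly as the one lemma that makes the transport argument legitimate. The remaining content — verifying that $\varphi$, $\varphi^{-1}$, and $\psi$ are order (anti-)isomorphisms — is routine manipulation with $\ovee$, $\ominus$, and the cancellation property, so I would dispatch it quickly rather than grinding through all cases, noting that the second, fourth, and sixth implications are formal duals of the first, third, and fifth under the complement anti-isomorphism $a \mapsto a^\perp$.
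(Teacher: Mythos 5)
Your proposal follows the same route as the paper's own proof: set up $a \mapsto x \ovee a$ as an order isomorphism $[0,x^\perp]_E \to [x,1]_E$ with inverse $(\ )\ominus x$, note that $x \ominus (\ )$ is an order-reversing involution of $[0,x]_E$, and transport suprema and infima along these maps, the only real content being the comparison between suprema/infima computed in an interval and in all of $E$.

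However, the ``one lemma'' you propose to carry that comparison is false as you state it. You claim that for each of the three intervals ``any bound for $S$ inside $E$ automatically lies in the interval once $S$ does, so suprema/infima taken in the interval agree with those in $E$.'' For the down-sets $[0,x^\perp]_E$ and $[0,x]_E$ this fails for upper bounds (e.g.\ $1$ is an upper bound of any $S$ but lies in $[0,x^\perp]_E$ only when $x=0$), and a supremum computed inside such an interval need not be a supremum in $E$. Concretely, in the effect algebra of even-cardinality subsets of $\{1,\dots,6\}$ (with $A \perp B$ iff $A \cap B = \emptyset$ and $A \ovee B = A \cup B$), the set $S = \{\{1,2\},\{2,3\}\}$ has supremum $\{1,2,3,4\}$ inside the interval $[0,\{1,2,3,4\}]_E$, yet has no supremum in $E$, because $\{1,2,3,4\}$, $\{1,2,3,5\}$, $\{1,2,3,6\}$ are incomparable minimal upper bounds. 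What is true --- and what the paper carefully states --- is directional: suprema/infima in $E$ of a non-empty $S$ always restrict to any interval containing $S$, but conversely only suprema computed in up-sets $[a,1]_E$, and infima computed in down-sets $[0,b]_E$, are automatically suprema/infima in $E$. Your six instantiations happen to need only these valid directions (the hypothesized side is always handled by restriction, and every concluded side is a supremum landing in $[x,1]_E$ or an infimum landing in $[0,x^\perp]_E$ or $[0,x]_E$), so your argument survives once the compatibility lemma is weakened to this directional form; but formulating and proving that directional statement is essentially all the work in this lemma, so as written your plan has a real, if reparable, hole at its centre.

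A side remark: the printed hypothesis of the third implication, ``$\bigwedge_{s\in S} s\ominus x$ exists,'' must be a typo for ``$\bigvee_{s\in S} s\ominus x$ exists''; with the printed hypothesis the implication is false (in the example above, with $x=\emptyset$, the infimum $\bigwedge S$ exists while $\bigvee S$ does not). Your reading of the third and fourth implications as the first two ``viewed through $\varphi^{-1}$'' tacitly makes this correction, which is also the version that the paper's argument actually establishes.
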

\begin{proof}
    Note that $a \mapsto x\ovee a$
    gives an order isomorphism~$[0,x^\perp]_E\to [x,1]_E$
    with inverse~$a \mapsto a \ominus x$.
    Whence~$x\ovee(\ )$ preserves and reflects
    all infima and suprema \emph{restricted to}~$[0,x^\perp]_E$ and~$[x,1]_E$.
Surely,
given elements $a\leq b$ from~$E$,
and a subset~$S$ of the interval~$[a,b]_E$,
it is clear
that any supremum (infimum)
    of~$S$ in~$E$ will be the supremum (infimum) of~$S$ in~$[a,b]_E$ too
    (using here that~$S$ is non-empty).
The converse does not always hold,
    but when $S$ has a supremum in $[a,1]_E$,
    then this is the supremum in~$E$ too
    (and when~$S$ has an infimum in $[0,b]_E$,
    then this is the infimum in~$E$ too).
These considerations yield the first four equations.
For the latter two
we just add the observation
    that $x\ominus (\ )$
    gives an order reversing isomorphism~$[0,x]_E\to[0,x]_E$.
%
\end{proof}

We can now prove a few basic yet useful facts of $\omega$-complete effect monoids. 
These lemmas deal with elements that are summable with
    themselves: elements $a$ such that $a\perp a$ which means that
    $a\ovee a$ is defined. For $n\in \N$ we will use the notation $na
    = a\ovee \ldots \ovee a$ for the $n$-fold sum of $a$ with itself
    (when it is defined).
We study these self-summable elements
    to  be able to define a ``$\frac{1}{2}$''
    in some effect monoids later on.

\begin{lemma}\label{lem:selfsummable}
    For any~$a \in M$ in some effect monoid~$M$,
    the element~$a \cdot a^\perp$ is summable with itself.
\end{lemma}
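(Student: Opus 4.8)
The plan is to expand the identity $1 = 1\cdot 1$ into four cross-terms using bi-additivity of the multiplication, and then recognise two of those terms as copies of $a\cdot a^\perp$. Recall that saying $a\cdot a^\perp$ is summable with itself means precisely that $(a\cdot a^\perp)\ovee(a\cdot a^\perp)$ is defined, so it suffices to exhibit this sum inside a larger sum that is already known to exist.

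Concretely, I would start from $1 = a\ovee a^\perp$ and substitute this for both factors of $1\cdot 1$. Applying distributivity on the right factor first (legitimate since $a\perp a^\perp$) gives $1\cdot 1 = (1\cdot a)\ovee(1\cdot a^\perp)$. Expanding each term by distributivity on the left factor, namely $1\cdot a = (a\ovee a^\perp)\cdot a = (a\cdot a)\ovee(a^\perp\cdot a)$ and $1\cdot a^\perp = (a\ovee a^\perp)\cdot a^\perp = (a\cdot a^\perp)\ovee(a^\perp\cdot a^\perp)$, yields
$$1 \;=\; (a\cdot a)\,\ovee\,(a^\perp\cdot a)\,\ovee\,(a\cdot a^\perp)\,\ovee\,(a^\perp\cdot a^\perp).$$
Each invocation of distributivity simultaneously supplies the orthogonality of the relevant summands, so by associativity and commutativity of $\ovee$ all four terms are jointly summable; in particular the partial sum $(a^\perp\cdot a)\ovee(a\cdot a^\perp)$ exists.

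The decisive step is then to apply Lemma~\ref{lem:ssperpcommute}, which gives $a^\perp\cdot a = a\cdot a^\perp$. Rewriting both middle terms accordingly turns the existing partial sum into $(a\cdot a^\perp)\ovee(a\cdot a^\perp)$, which is exactly the claim that $a\cdot a^\perp$ is summable with itself. I do not anticipate any genuine obstacle: the argument uses no completeness hypothesis and works in an arbitrary effect monoid. The only point requiring care is the bookkeeping of the orthogonality side-conditions accompanying each use of distributivity, but these are handed to us automatically by the effect-monoid axioms, while the needed commutation of $a$ with $a^\perp$ has already been established.
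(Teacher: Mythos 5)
Your proposal is correct and is essentially the paper's own proof: both expand $1 = 1\cdot 1 = (a\ovee a^\perp)\cdot(a\ovee a^\perp)$ into the four cross-terms via bi-additivity and then invoke Lemma~\ref{lem:ssperpcommute} to identify $a^\perp\cdot a$ with $a\cdot a^\perp$, exhibiting $(a\cdot a^\perp)\ovee(a\cdot a^\perp)$ as a sub-sum of an existing sum. The only difference is that you spell out the two-stage application of distributivity, which the paper compresses into a single displayed computation.
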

\begin{proof}
Since $1=1\cdot 1 = (a\ovee a^\perp)\cdot(a\ovee a^\perp)
    = a\cdot a\ovee a\cdot a^\perp \ovee a^\perp \cdot a \ovee 
    a^\perp\cdot a^\perp$,
    and $a\cdot a^\perp = a^\perp \cdot a$ by Lemma~\ref{lem:ssperpcommute},
    we see that~$a\cdot a^\perp\ovee a\cdot a^\perp$  indeed exists.
\end{proof}

\begin{lemma}\label{lem:archemedeanomegadirectedcomplete}\label{lem:asummablepowerszero}\label{lem:nonilpotents}
    Let $a$ be an element of an $\omega$-complete effect monoid $M$.
    \begin{enumerate}
        \item If $na$ exists for all $n$ then $a=0$.
        \item If $a^2 = 0$ then $a=0$.
        \item If $a\perp a$ then $\bigwedge_n a^n = 0$.
    \end{enumerate}
\end{lemma}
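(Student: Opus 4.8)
The plan is to prove the three parts in order, each time using $\omega$-completeness to produce a supremum or infimum and then pinning it down with a cancellation argument (cancellation, $x\ovee z = y \ovee z \implies x=y$, being a standard consequence of the effect-algebra axioms).

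For part (1), I would first note that the hypothesis makes $\{na\}_n$ an increasing sequence, since $(n+1)a = na\ovee a \geq na$, so by $\omega$-completeness its supremum $s=\bigvee_n na$ exists. The key step is to show $s\ominus a = s$: from $(n+1)a = a\ovee na \leq s$ one gets $na\leq s\ominus a$ for every $n$ (note $a = 1a\leq s$, so $s\ominus a$ is defined), hence $s\ominus a$ is an upper bound of the sequence and $s\leq s\ominus a$; since also $s\ominus a\leq s$, equality follows. Then $s = (s\ominus a)\ovee a = s\ovee a = s\ovee 0$, and cancelling $s$ yields $a=0$.

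For part (2), I would reduce to part (1) by a doubling bootstrap whose engine is Lemma~\ref{lem:selfsummable}. For any $b$ one has $b\cdot b^\perp = b\ominus b^2$, so if $b^2=0$ then $b\cdot b^\perp = b$; since $b\cdot b^\perp$ is summable with itself, so is $b$, i.e.\ $2b$ exists. Moreover bi-additivity gives $(2b)^2 = 4b^2 = 0$, so the condition ``squares to $0$'' is inherited by $2b$. Starting from $b=a$ and iterating, I obtain by induction that $2^n a$ exists and squares to $0$ for every $n$. Since the existence of $2^n a$ forces $ma$ to exist for all $m\leq 2^n$ (by associativity), the sum $na$ exists for all $n$, and part (1) finishes the job.

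For part (3), the hypothesis $a\perp a$ (i.e.\ $2a\leq 1$) gives $a^{n+1}=a\cdot a^n\leq a^n$, so $\{a^n\}_n$ is decreasing and its infimum $b=\bigwedge_n a^n$ exists by $\omega$-completeness. The crucial inequality is $2a^n\leq a^{n-1}$: distributivity applied to the summable pair $a\ovee a$ gives $a^n\ovee a^n = a^{n-1}\cdot(a\ovee a)\leq a^{n-1}$. Since $b\leq a^n\leq (a^n)^\perp\leq b^\perp$, the element $b$ is itself summable with itself, and $2b = b\ovee b\leq a^n\ovee a^n = 2a^n\leq a^{n-1}$ for all $n$, whence $2b\leq\bigwedge_m a^m = b$. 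Together with $b\leq 2b$ this forces $2b=b$, and cancelling $b$ yields $b=0$. The main obstacle I anticipate is precisely in parts (2) and (3), where the natural tool — normality of multiplication — is not yet available (it is proved only later); the workarounds are the finite identity $(2b)^2=4b^2$ in part (2) and the distributivity-only inequality $2a^n\leq a^{n-1}$ in part (3), the latter combined with the order anti-isomorphism $(\ )^\perp$ to see that the infimum $b$ is self-summable before the final cancellation applies.
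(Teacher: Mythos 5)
Your proof is correct in all three parts, and in substance it follows the paper's own argument. For part (1) the paper applies its Lemma~\ref{lem:additionisnormal} to get $a \ovee \bigvee_n na = \bigvee_n (n+1)a = \bigvee_n na$ and then cancels; your direct verification that $s \ominus a$ is an upper bound of the $na$'s is just an inline proof of that same normality step, followed by the same cancellation. Part (2) is the paper's proof almost verbatim: $a^2 = 0$ gives $a = a \cdot a^\perp$, which is self-summable by Lemma~\ref{lem:selfsummable}, then $(2a)^2 = 4a^2 = 0$, so by induction $2^n a$ exists for all $n$ and part (1) applies. The only genuine divergence is part (3): the paper notes that $(2a)^n = 2^n a^n$ exists, so $b = \bigwedge_n a^n \leq a^n$ makes $2^n b$ defined for every $n$, and then invokes part (1) once more; you instead use the single-step inequality $2a^n \leq a^{n-1}$ (one application of distributivity plus monotonicity of the product) to conclude $2b \leq \bigwedge_n a^n = b$, hence $2b = b$, and cancel directly. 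Your variant is marginally more self-contained, since it bypasses part (1) and avoids the full expansion of $(2a)^n$, while the paper's version reuses part (1) as a black box; both are equally short and rest on the same underlying facts, so this is a cosmetic rather than structural difference.
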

\begin{proof}
    For point 1, we have $a\ovee \bigvee_n na
    =\bigvee_n a\ovee na
    = \bigvee_n (n+1)a = \bigvee_n na$,
    and so~$a=0$.

    For point 2, since $a^2 = 0$ we have $a = a\cdot 1 = a\cdot (a\ovee a^\perp) = a\cdot a^\perp$, and hence (because of
    Lemma~\ref{lem:selfsummable}) $a$ is summable with itself. But
    furthermore~$(a\ovee a)^2 = 4a^2 = 0$, and so $(a\ovee a)^2=0$.
    Continuing in this fashion,
        we see that~$2^n a$ exists for every~$n \in \N$
        and~$(2^n a)^2 = 0$.
        Hence, for any~$m \in \N$
        the sum~$m a$ exists 
    so that by the previous point
    $a = 0$.

    For point 3, write~$b \equiv \bigwedge_n a^n$.
    As $(2a)^n = 2^n a^n$ and $b\leq a^n$ we see that
    $2^n b$ is defined. But this is true for all $n$, and so again by the point 1, $b=0$.
\end{proof}





\section{Floors, ceilings and division}\label{sec:floorceiling}

In this section we will see that any $\omega$-complete effect monoid has
\emph{floors} and \emph{ceilings}. These are respectively the largest idempotent
below an element and the smallest idempotent above an element.
We will also construct a ``division'': for $a\leq b$ we will find 
an element $a/b$ such that $(a/b)\cdot b = a$. 

Then using ceilings and this division we will show that multiplication in a
$\omega$-complete effect monoid is always normal, i.e.~that 
$b\cdot \bigvee S = \bigvee b\cdot S$
for non-empty~$S$ for which~$\bigvee S$ exists. This technical result will be frequently used in the remaining sections.

\begin{definition}
    Let~$(x_i)_{i \in I}$ be a (potentially infinite)
    family of elements from an effect algebra~$E$.
    We say that the sum~$\bigovee_{i \in I} x_i$ exists
        if for every finite subset~$S \subseteq I$
        the sum~$\bigovee_{i \in S} x_i$ exists
        and the supremum~$\bigvee_{\text{finite }S \subseteq I}
        \bigovee_{i \in S} x_i$ exists
            as well.
    In that case we write $\bigovee_{i \in I} x_i \equiv
        \bigvee_{\text{finite }S \subseteq I}
        \bigovee_{i \in S} x_i$.
\end{definition}
\begin{lemma}
    \label{lem:proto-ceilfloor}
Given~$a \in M$ for an effect monoid~$M$, we have
\begin{equation*}
    \textstyle
    (a^N)^\perp\ =\ a^\perp\ \ovee\  a^\perp \cdot a\ \ovee\  a^\perp \cdot a^2
    \ \ovee\  \dotsb\ \ovee\  a^\perp \cdot a^{N-1}
\end{equation*}
for every natural number~$N$.
\end{lemma}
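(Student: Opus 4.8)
The plan is to prove the identity by induction on $N$, the crux being a single ``peeling'' relation obtained from bi-additivity of the multiplication. Throughout I write $S_N \equiv a^\perp \ovee a^\perp\cdot a \ovee \dots \ovee a^\perp\cdot a^{N-1}$ for the claimed right-hand side, so that the statement is equivalent to $a^N \ovee S_N = 1$, i.e.\ $(a^N)^\perp = S_N$. The base case $N=1$ is immediate: $S_1 = a^\perp$ and $a \ovee a^\perp = 1$ by the defining property of the complement. (If one takes $0\in\N$, one may instead start at $N=0$, where both sides equal $0$ via the empty sum.)

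For the inductive step, the key observation is that multiplying the identity $1 = a\ovee a^\perp$ on the right by $a^N$ and invoking distributivity (right bi-additivity) gives
\[
a^N \;=\; (a\ovee a^\perp)\cdot a^N \;=\; a\cdot a^N \ovee a^\perp\cdot a^N \;=\; a^{N+1}\ovee a^\perp\cdot a^N .
\]
In particular $a^{N+1}\perp a^\perp\cdot a^N$, and this rewrites $a^N$ as the sum of the head $a^{N+1}$ and the ``new'' term $a^\perp\cdot a^N$ that distinguishes $S_{N+1}$ from $S_N$.

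I then substitute this into the inductive hypothesis $1 = a^N\ovee S_N$. Formally I have $a^{N+1}\perp(a^\perp\cdot a^N)$ with sum $a^N$, together with $a^N\perp S_N$ with sum $1$; the associativity axiom for effect algebras then guarantees that $a^{N+1}\perp\bigl((a^\perp\cdot a^N)\ovee S_N\bigr)$ and that the total regroups to $1 = a^{N+1}\ovee\bigl((a^\perp\cdot a^N)\ovee S_N\bigr)$. By commutativity of $\ovee$ the inner sum is exactly $S_N\ovee a^\perp\cdot a^N = S_{N+1}$, so $1 = a^{N+1}\ovee S_{N+1}$ and hence $(a^{N+1})^\perp = S_{N+1}$, closing the induction.

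I expect no genuine obstacle: the identity is a telescoping sum, morally the ring computation $\sum_{k=0}^{N-1}(1-a)a^k = 1-a^N$ transported into effect-monoid language. The only point that needs care — and that is easy to gloss over — is the bookkeeping of partial definedness: one must appeal to the effect-algebra associativity axiom (rather than a naive rearrangement) to certify that the regrouped sum is \emph{defined} and equal, and one must peel the terms off on the correct side so that $a^\perp\cdot a^N$, and not $a^N\cdot a^\perp$, is the term produced.
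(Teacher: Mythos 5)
Your proof is correct and takes essentially the same approach as the paper: the paper's proof is exactly the same telescoping argument, repeatedly expanding the trailing term via $a^n = (a^\perp \ovee a)\cdot a^n = a^\perp\cdot a^n \ovee a^{n+1}$ and writing the induction as a chain of equalities with dots. Your version merely makes the induction and the effect-algebra associativity bookkeeping explicit, which is a fair formalisation of what the paper leaves implicit.
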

\begin{proof}
From the computation
    \begin{equation*}
\begin{alignedat}{3}
1 \ &=\ a^\perp \,\ovee\,a
\\
    &=\  a^\perp \,\ovee\,(a^\perp\ovee a)\cdot a \\
    &\qquad\ \equiv\  a^\perp\,\ovee\,a^\perp \cdot a \,\ovee\,a^2
\\
    &=\  a^\perp \,\ovee\,a^\perp \cdot a \,\ovee\,(a^\perp\ovee a) \cdot a^2 \\
    & \qquad \ \equiv\  a^\perp\,\ovee\,a^\perp\cdot a
    \,\ovee\, a^\perp\cdot  a^2\,\ovee\,
    a^3
\\
    &\ \vdots
\\
    &=\ \Bigl(\,\bigovee_{n=0}^{N-1} a^\perp \cdot a^n\,\Bigr)\,\ovee\, a^N
\end{alignedat}
\end{equation*}
the result follows immediately.
\end{proof}
\begin{corollary}
    The sum~$\bigovee_{n=0}^\infty a^\perp \cdot a^n$ exists
        for any element~$a$ 
        from an~$\omega$-complete effect monoid~$M$.
\end{corollary}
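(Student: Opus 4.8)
The plan is to reduce the existence of the infinite sum directly to the $\omega$-completeness of $M$, using the finite partial sums already computed in Lemma~\ref{lem:proto-ceilfloor}. By the definition of $\bigovee$, the sum $\bigovee_{n=0}^\infty a^\perp\cdot a^n$ exists precisely when (i) every finite partial sum $\bigovee_{n\in S} a^\perp\cdot a^n$ exists for finite $S\subseteq\N$, and (ii) the supremum of all these partial sums exists. I would verify these two conditions in turn.

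For (i), Lemma~\ref{lem:proto-ceilfloor} already gives that the sum over every initial segment exists, namely $\bigovee_{n=0}^{N-1} a^\perp\cdot a^n = (a^N)^\perp$. Any finite $S\subseteq\N$ is contained in some initial segment $\{0,\dots,N-1\}$, and since a sub-family of a summable family is again summable in an effect algebra (a standard consequence of the commutativity and associativity axioms for $\ovee$), the sum $\bigovee_{n\in S} a^\perp\cdot a^n$ exists as well, bounded above by $(a^N)^\perp$.

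For (ii), I would exhibit the partial sums as an increasing sequence and invoke $\omega$-completeness. The key observation is that $\bigl((a^N)^\perp\bigr)_{N\geq 1}$ is increasing: since $a^{N+1}=a^N\cdot a\leq a^N\cdot 1=a^N$ (right multiplication being monotone by bi-additivity) and $x\mapsto x^\perp$ is order-reversing, we get $(a^N)^\perp\leq (a^{N+1})^\perp$. Hence by $\omega$-completeness the supremum $\bigvee_N (a^N)^\perp$ exists. It then remains to identify this with the supremum over \emph{all} finite $S$, not merely over initial segments. This follows by cofinality: every partial sum $\bigovee_{n\in S} a^\perp\cdot a^n$ lies below $(a^N)^\perp$ for any $N>\max S$, while each $(a^N)^\perp$ is itself one of the partial sums; the two families therefore have exactly the same upper bounds, so $\bigvee_{\text{finite }S}\bigovee_{n\in S} a^\perp\cdot a^n = \bigvee_N (a^N)^\perp$ exists.

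The only real subtlety — and the step I would treat most carefully — is matching the definition of the infinite sum, which quantifies over all finite subsets $S$, against the $\omega$-completeness hypothesis, which only supplies suprema of increasing sequences (equivalently, of countable directed sets). The cofinality argument above bridges this gap cleanly; alternatively, one can observe that the finite subsets of $\N$ form a countable directed poset under inclusion, so the induced family of partial sums is a countable directed subset of $M$ whose supremum exists directly by $\omega$-completeness.
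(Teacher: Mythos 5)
Your proof is correct and is essentially the argument the paper leaves implicit: the corollary follows from Lemma~\ref{lem:proto-ceilfloor} precisely because the initial-segment partial sums equal $(a^N)^\perp$, form an increasing sequence whose supremum exists by $\omega$-completeness, and are cofinal among all finite partial sums. Your careful treatment of the quantification over arbitrary finite subsets is exactly the right way to fill in the details the paper omits.
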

\begin{definition}
Given an element~$a$ of an $\omega$-complete
effect monoid~$M$
    \begin{equation*}
        \ceil{a}\ \equiv\ \bigovee_{n=0}^\infty a\cdot (a^\perp)^n
        \qquad\text{and}\qquad
        \floor{a}\ \equiv \ \bigwedge_{n=0}^\infty a^n
    \end{equation*}
    are called the \Define{ceiling} of a and the \Define{floor}
    of~$a$, respectively.
\end{definition}
We list
some basic properties of~$\ceil{a}$
and~$\floor{a}$ in Proposition~\ref{prop:ceilfloor},
after we have made the observations
necessary to establish them.

\begin{lemma}
    \label{lem:infaperpan}
Given an element~$a$ of an $\omega$-complete
effect monoid~$M$,
we have $\bigwedge_n a^\perp\cdot  a^n = 0$.
\end{lemma}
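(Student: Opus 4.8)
The plan is to show that the infimum $c \equiv \bigwedge_n a^\perp \cdot a^n$ exists and that the $N$-fold sum $Nc$ is defined for every $N \in \N$, so that $c=0$ follows at once from Lemma~\ref{lem:archemedeanomegadirectedcomplete}(1).

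First I would check that the sequence $b_n \equiv a^\perp\cdot a^n$ is decreasing, so that its infimum exists by $\omega$-completeness. Indeed $b_{n+1} = (a^\perp\cdot a^n)\cdot a = b_n\cdot a$, and since $1 = a\ovee a^\perp$ bi-additivity gives $b_n = b_n\cdot a\ovee b_n\cdot a^\perp$, whence $b_n\cdot a\leq b_n$; thus $b_{n+1}\leq b_n$. Write $c\equiv\bigwedge_n b_n$.

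The core step is to promote the summability of the $b_n$ to that of $c$. By Lemma~\ref{lem:proto-ceilfloor} the finite sum $\bigovee_{n=0}^{N-1} b_n = (a^N)^\perp$ exists (and is $\leq 1$) for every $N$. Since $c\leq b_n$ for all $n$, I would show by induction on $N$ that $Nc$ exists with $Nc\leq (a^N)^\perp$. The step relies on the elementary effect-algebra fact that if $x\ovee y$ exists and $x'\leq x$, $y'\leq y$, then $x'\ovee y'$ exists and $x'\ovee y'\leq x\ovee y$ (the orthogonality $x'\perp y'$ follows from $x'\leq x\leq y^\perp\leq y'^\perp$): applying this with $x'=(N-1)c\leq\bigovee_{n=0}^{N-2}b_n$ and $y'=c\leq b_{N-1}$ completes the induction.

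Having established that $Nc$ is defined for every $N$, Lemma~\ref{lem:archemedeanomegadirectedcomplete}(1) forces $c=0$, which is the claim. I expect the only real obstacle to be the inductive domination step, i.e.\ making precise that being pointwise below a summable sequence suffices to guarantee summability of the copies of $c$; the remaining manipulations are routine.
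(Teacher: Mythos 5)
Your proof is correct, and it shares the paper's overall strategy: establish that the $N$-fold sum of the infimum exists for every $N$, then conclude it is zero from Lemma~\ref{lem:archemedeanomegadirectedcomplete}. The two arguments differ in where that summability comes from. The paper expands $1 = (a^\perp\ovee a)^n$ with the binomial theorem --- which requires the commutativity $a\cdot a^\perp = a^\perp\cdot a$ of Lemma~\ref{lem:ssperpcommute} --- and extracts the single term $\binom{n}{1}\,(a^\perp\cdot a^{n-1})$, so that $n$ copies of one element dominating $b\equiv\bigwedge_n a^\perp\cdot a^n$ are already known to be summable, and $nb$ exists by downward closure of summability. You instead take the telescoping identity of Lemma~\ref{lem:proto-ceilfloor}, $(a^N)^\perp = \bigovee_{n=0}^{N-1} a^\perp\cdot a^n$, whose $N$ \emph{distinct} summands each dominate $c$, and promote this to existence of $Nc$ by induction; your two-term domination step is indeed a standard and valid effect-algebra fact (from $x'\leq x\leq y^\perp\leq (y')^\perp$ one gets $x'\perp y'$, and writing $x = x'\ovee u$, $y = y'\ovee v$ gives $x\ovee y = (x'\ovee y')\ovee(u\ovee v)$, so $x'\ovee y'\leq x\ovee y$). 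What your route buys: it avoids the multinomial bookkeeping and any appeal to Lemma~\ref{lem:ssperpcommute}, re-using a lemma the paper has already proved for the neighbouring corollary. What the paper's route buys is brevity: once the binomial expansion is written down, the summability of $n$ copies of a single dominating element is immediate, with no induction needed.
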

\begin{proof}
Write~$b \equiv \bigwedge_n a^\perp \cdot a^n$.
Since~$a$ and~$a^\perp$ commute by Lemma~\ref{lem:ssperpcommute},
    we compute
\begin{equation*}
1\ =\ 1^n\ =\ (a^\perp \ovee a)^n
    \ =\ \bigovee_{k=0}^n \binom{n}{k}\bigl(\, (a^\perp)^k\cdot a^{n-k}
    \,\bigr),
\end{equation*}
and in particular see
    that the sum $\binom{n}{1} (a^\perp \cdot a^{n-1})\equiv 
    n(a^\perp \cdot a^{n-1})$ exists.
Because $b\leq a^\perp \cdot a^{n-1}$,
the $n$-fold sum~$nb$ exists too and hence~$b=0$
    by Lemma~\ref{lem:archemedeanomegadirectedcomplete}.
\end{proof}
\begin{lemma}
\label{lem:flooraa}
    We have~$\floor{a}=\floor{a}\cdot a = a \cdot \floor{a}$
for any element~$a$ of an $\omega$-complete
effect monoid~$M$.
\end{lemma}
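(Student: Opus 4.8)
The plan is to set $b \equiv \floor{a} = \bigwedge_n a^n$ and to exploit that the quantity $a^\perp \cdot b$ measures exactly the defect between $b$ and $a\cdot b$. First I would record two routine facts: the powers $a^n$ form a decreasing sequence (since $a^{n+1} = a\cdot a^n \leq 1 \cdot a^n = a^n$), so that $b \leq a^n$ for every $n$; and left and right multiplication by a fixed element are order preserving, which is an immediate consequence of distributivity (if $x\leq y$ then $y = x\ovee(y\ominus x)$, so $c\cdot y = c\cdot x \ovee c\cdot(y\ominus x)$, whence $c\cdot x \leq c\cdot y$, and symmetrically).

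Next I would decompose $b$ using distributivity over $1 = a \ovee a^\perp$. Since $b = 1\cdot b = (a\ovee a^\perp)\cdot b = a\cdot b \ovee a^\perp \cdot b$, we obtain simultaneously that $a\cdot b \leq b$ and, crucially, that $b \ominus (a\cdot b) = a^\perp \cdot b$. Thus proving the equality $a\cdot b = b$ reduces entirely to showing that $a^\perp \cdot b = 0$.

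To establish this, note that $b \leq a^n$ together with monotonicity of left multiplication by $a^\perp$ gives $a^\perp \cdot b \leq a^\perp \cdot a^n$ for every $n$, and hence $a^\perp \cdot b \leq \bigwedge_n a^\perp \cdot a^n = 0$ by Lemma~\ref{lem:infaperpan}. This yields $b = a\cdot b$. The remaining equality $b = b\cdot a$ follows by the symmetric argument applied to $b\cdot a^\perp$: from $b = b\cdot a \ovee b\cdot a^\perp$ we get $b\ominus(b\cdot a) = b\cdot a^\perp$, and using that $a^\perp$ commutes with each power $a^n$ (which follows from Lemma~\ref{lem:ssperpcommute} by an easy induction), we have $b\cdot a^\perp \leq a^n\cdot a^\perp = a^\perp\cdot a^n$, so the same infimum forces $b\cdot a^\perp = 0$.

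I expect the only genuine insight to be the identification of $a^\perp \cdot b$ as the difference $b \ominus (a\cdot b)$; once that is in hand, Lemma~\ref{lem:infaperpan} does all the work and the rest is routine monotonicity together with the commutation of $a$ with $a^\perp$. Notably, no appeal to normality of multiplication (not yet available at this point) nor to $b$ being idempotent is required.
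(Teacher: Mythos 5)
Your proof is correct and takes essentially the same route as the paper's: both reduce the statement to showing that the product of $\floor{a}$ with $a^\perp$ vanishes, by bounding it above (via monotonicity of multiplication and, where needed, the commutation $a^n\cdot a^\perp = a^\perp\cdot a^n$ from Lemma~\ref{lem:ssperpcommute}) by $\bigwedge_n a^\perp\cdot a^n = 0$ from Lemma~\ref{lem:infaperpan}. The only differences are cosmetic: you treat $a^\perp\cdot\floor{a}$ first (which needs no commutation) and spell out the distributivity step $\floor{a} = a\cdot\floor{a}\ovee a^\perp\cdot\floor{a}$ that the paper leaves implicit.
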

\begin{proof}
Using Lemmas~\ref{lem:ssperpcommute} and~\ref{lem:infaperpan} we compute $\floor{a} \cdot a^\perp
    = (\bigwedge_n a^n) \cdot a^\perp
\leq \bigwedge_n  a^n \cdot a^\perp 
= \bigwedge_n a^\perp \cdot a^n 
= 0$,
    and so~$\floor{a}\cdot a=\floor{a}$.
The other identity
    has a similar proof.
\end{proof}


\begin{lemma}
\label{lem:zerodivsum}
Given elements $a, b_1, b_2,\dotsc$
of a $\omega$-complete effect monoid~$M$
such that~$\bigovee_n b_n$ exists,
and~$a\cdot b_n=0$ for all~$n\in \N$,
we have~$a \cdot \bigovee_n b_n=0$.
\end{lemma}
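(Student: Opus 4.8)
The plan is to reduce the infinite sum to its finite partial sums and exploit that left multiplication by~$a$ can neither increase nor exceed the ``tail'' of the sum, thereby avoiding any appeal to normality of multiplication (which is only established later, and in fact uses this very lemma). Write $b \equiv \bigovee_n b_n$ and let $s_N \equiv \bigovee_{n=1}^N b_n$ denote the $N$-th partial sum. Since the sets $\{1,\dots,N\}$ are cofinal among the finite subsets of~$\N$ and the partial sums are increasing, the definition of $\bigovee_n b_n$ yields $b = \bigvee_N s_N$.

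First I would dispatch the finite sums: by bi-additivity of the multiplication, $a\cdot s_N = \bigovee_{n=1}^N a\cdot b_n = 0$ for every~$N$, as each $a\cdot b_n = 0$. Next, fixing~$N$, I decompose $b = s_N \ovee (b\ominus s_N)$; distributing~$a$ over this sum and using $a\cdot s_N = 0$ gives $a\cdot b = a\cdot(b\ominus s_N)$. Because $a\leq 1$, left multiplication by~$a$ is order preserving and dominated by the identity, so $a\cdot(b\ominus s_N)\leq b\ominus s_N$. Hence $a\cdot b \leq b\ominus s_N$ for every~$N$.

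It then remains to drive the right-hand side to~$0$. Applying the last displayed implication of Lemma~\ref{lem:additionisnormal} with $x = b$ and $S = \{s_N\}_N \subseteq [0,b]_M$, whose supremum $\bigvee_N s_N = b$ exists, gives $\bigwedge_N (b\ominus s_N) = b\ominus\bigvee_N s_N = b\ominus b = 0$ (the infimum exists since the $b\ominus s_N$ form a decreasing sequence in the $\omega$-complete~$M$). As $a\cdot b \leq b\ominus s_N$ for all~$N$, I conclude $a\cdot b \leq \bigwedge_N(b\ominus s_N) = 0$, whence $a\cdot b = 0$.

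The only real subtlety, and the reason a naive argument fails, is that one would like to pull~$a$ inside the supremum via $a\cdot\bigvee_N s_N = \bigvee_N a\cdot s_N = 0$; but normality of multiplication is not yet available at this point in the development. The tail argument above—observing that $a\cdot b$ equals~$a$ applied to an arbitrarily small remainder $b\ominus s_N$, which is itself an upper bound for $a\cdot(b\ominus s_N)$—sidesteps this entirely, relying only on bi-additivity, order preservation of multiplication, and the order-theoretic behaviour of~$\ominus$ recorded in Lemma~\ref{lem:additionisnormal}.
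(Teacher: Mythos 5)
Your proof is correct, but it takes a genuinely different route from the paper's. You split the \emph{multiplicand}: $b = s_N \ovee (b \ominus s_N)$, so that $a\cdot b = a\cdot(b\ominus s_N) \leq b\ominus s_N$, and then drive the tails to zero via the last implication of Lemma~\ref{lem:additionisnormal}, concluding $a\cdot b \leq \bigwedge_N (b\ominus s_N) = 0$. The paper instead splits the \emph{multiplier}: from $a\cdot s_n = 0$ it gets $s_n = (a\ovee a^\perp)\cdot s_n = a^\perp\cdot s_n$, and then squeezes $\bigvee_n s_n = \bigvee_n a^\perp\cdot s_n \leq a^\perp\cdot \bigvee_n s_n \leq \bigvee_n s_n$, forcing $a^\perp\cdot \bigvee_n s_n = \bigvee_n s_n$; cancelling in $\bigvee_n s_n = a\cdot\bigvee_n s_n \ovee a^\perp\cdot \bigvee_n s_n$ then gives $a\cdot \bigvee_n s_n = 0$. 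Both arguments are elementary, use only bi-additivity and order-preservation of the product, and deliberately avoid normality of multiplication (which indeed is proved later and depends on this very lemma) --- your closing remark on that circularity matches the paper's intent exactly. The trade-off: the paper's squeeze needs nothing beyond the trivial bound $\bigvee_n a^\perp\cdot s_n \leq a^\perp\cdot \bigvee_n s_n$ and cancellation, so it is entirely self-contained; your version leans on the $\ominus$/infimum bookkeeping of Lemma~\ref{lem:additionisnormal}, but in exchange offers the cleaner intuition that $a\cdot b$ is trapped below an arbitrarily small tail of the sum.
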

\begin{proof}
    Writing~$s_N\equiv\bigovee_{n=1}^N b_n$,
    we have~$s_1\leq s_2\leq \dotsb$
    and~$a\cdot s_n=0$ for all~$n$.
    Since~$s_n=(a\ovee a^\perp)\cdot s_n
    = a \cdot s_n \ovee a^\perp \cdot s_n = a^\perp \cdot s_n$
    for all~$n \in \N$,
    we have
\begin{equation*}
    \textstyle \bigvee_n s_n \ = \ \bigvee_n a^\perp \cdot s_n
    \ \leq\  a^\perp \cdot \bigvee_n s_n
    \ \leq\  \bigvee_n s_n,
\end{equation*}
which implies that~$a^\perp\cdot \bigvee_ns_n= \bigvee_n s_n$,
and thus~$a\cdot \bigovee_n b_n \equiv a \cdot \bigvee_n s_n=0$.
\end{proof}
\begin{proposition}
\label{prop:ceilprod}
Given elements $a$ and~$b$ of an $\omega$-complete effect monoid~$M$,
$$
    a\cdot b\,=\,0\quad\implies\quad a\cdot\ceil{b}\,=\,0.
$$
\end{proposition}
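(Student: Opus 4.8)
The plan is to reduce the statement to Lemma~\ref{lem:zerodivsum} by recognising $\ceil{b}$ as a countable orthogonal sum each of whose summands is annihilated by $a$ on the left. Recall that, by definition, $\ceil{b} = \bigovee_{n=0}^\infty b\cdot (b^\perp)^n$, and this sum exists: the corollary preceding the definition of the ceiling (applied to $b^\perp$ in place of $a$) guarantees precisely this. Setting $b_n \equiv b\cdot(b^\perp)^n$, it therefore suffices to check that $a\cdot b_n = 0$ for every $n$, whereupon Lemma~\ref{lem:zerodivsum} yields $a\cdot\ceil{b} = a\cdot\bigovee_n b_n = 0$, as required.

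The verification that $a\cdot b_n = 0$ is immediate from associativity of the multiplication together with the hypothesis $a\cdot b = 0$: one computes $a\cdot b_n = a\cdot\bigl(b\cdot(b^\perp)^n\bigr) = (a\cdot b)\cdot(b^\perp)^n = 0\cdot(b^\perp)^n = 0$. In particular the $n=0$ summand recovers $a\cdot b = 0$ itself, and the higher summands follow by simply carrying the zero through the remaining factors.

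I do not expect a genuine obstacle here, since Lemma~\ref{lem:zerodivsum} already performs the essential work of pushing left multiplication by $a$ through the countable orthosum; everything else is a one-line associativity computation. The only point worth flagging is that one must confirm the hypotheses of Lemma~\ref{lem:zerodivsum}, namely that $\bigovee_n b_n$ exists — but this is exactly the content of the definition of $\ceil{b}$ in an $\omega$-complete effect monoid, so no additional argument is needed.
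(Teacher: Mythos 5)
Your proof is correct and follows essentially the same route as the paper's: both decompose $\ceil{b}$ as the countable orthogonal sum $\bigovee_n b\cdot(b^\perp)^n$, observe that $a\cdot b = 0$ kills each summand $a\cdot b\cdot (b^\perp)^n$ by associativity, and then invoke Lemma~\ref{lem:zerodivsum} to push the annihilation through the sum. The hypothesis-checking you flag (existence of the sum) is indeed exactly the content of the corollary following Lemma~\ref{lem:proto-ceilfloor}, so nothing is missing.
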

\begin{proof}
If~$a\cdot b=0$,
then also $a\cdot b\cdot (b^\perp)^n=0$
    for all~$n$.
Hence by Lemma~\ref{lem:zerodivsum}
$a\cdot \ceil{b}\equiv a\cdot \bigovee_{n=1}^\infty b\cdot (b^\perp)^n
    =0$.
\end{proof}
\begin{proposition}
\label{prop:ceilfloor}
Let~$a$ be an element of an $\omega$-complete
effect monoid~$M$.
\begin{enumerate}
\item
    \label{prop:ceilfloor-floor}
The floor~$\floor{a}$ of~$a$
        is an idempotent with~$\floor{a}\leq a$.
        In fact, $\floor{a}$
        is the greatest idempotent below~$a$.
\item
    \label{prop:ceilfloor-ceil}
The ceiling~$\ceil{a}$ of~$a$
is the least idempotent above~$a$.
\item
    \label{prop:ceilfloor-duality}
    We have
        $\ceil{a}^\perp = \floor{a^\perp}$
        and
        $\floor{a}^\perp = \ceil{a^\perp}$.
\end{enumerate}
\end{proposition}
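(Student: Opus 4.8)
The plan is to establish the duality in part~\ref{prop:ceilfloor-duality} first, since it lets me derive the ceiling statement from the floor statement by a purely formal argument. The key observation is that applying Lemma~\ref{lem:proto-ceilfloor} with $a^\perp$ in place of $a$ gives $((a^\perp)^N)^\perp = \bigovee_{n=0}^{N-1} a\cdot(a^\perp)^n$, so the finite partial sums defining $\ceil{a}$ are exactly the complements $((a^\perp)^N)^\perp$. Since $x\mapsto x^\perp$ is an order anti-isomorphism, it turns the defining supremum into an infimum, whence $\ceil{a} = \bigvee_N ((a^\perp)^N)^\perp = (\bigwedge_N (a^\perp)^N)^\perp = \floor{a^\perp}^\perp$. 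Substituting $a^\perp$ for $a$ then yields the companion identity $\floor{a}^\perp = \ceil{a^\perp}$. This reduces part~\ref{prop:ceilfloor-ceil} to part~\ref{prop:ceilfloor-floor}.

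For part~\ref{prop:ceilfloor-floor}, that $\floor{a}\leq a$ is immediate since $\floor{a} = \bigwedge_n a^n \leq a^1$. To see that $\floor{a}$ is idempotent I would use a constant-difference trick. By Lemma~\ref{lem:flooraa} we have $\floor{a}\cdot a = \floor{a}$, hence $\floor{a}\cdot a^n = \floor{a}$ for all $n$. Writing $c_n \equiv a^n\ominus\floor{a}$ and distributing, $\floor{a} = \floor{a}\cdot a^n = \floor{a}^2\ovee(\floor{a}\cdot c_n)$, so $\floor{a}\cdot c_n = \floor{a}\ominus\floor{a}^2$ does not depend on $n$. Because $\floor{a}\cdot c_n\leq c_n$ and $\bigwedge_n c_n = (\bigwedge_n a^n)\ominus\floor{a} = 0$ — the latter an instance of Lemma~\ref{lem:additionisnormal} applied to the decreasing family $a^n\downarrow\floor{a}$ — the constant $\floor{a}\ominus\floor{a}^2$ lies below $0$ and therefore vanishes, giving $\floor{a}^2 = \floor{a}$. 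For maximality, any idempotent $q\leq a$ satisfies $q = q^n\leq a^n$ for every $n$ by monotonicity of the product, so $q\leq\bigwedge_n a^n = \floor{a}$.

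Finally, part~\ref{prop:ceilfloor-ceil} follows from the previous two once I record that idempotents are closed under complement: if $p^2 = p$ then $p\cdot p^\perp = 0$ by Lemma~\ref{lem:idempotentiff}, and $p^\perp\cdot p = p\cdot p^\perp = 0$ by Lemma~\ref{lem:ssperpcommute}, so $(p^\perp)^2 = p^\perp$ again by Lemma~\ref{lem:idempotentiff}. Hence $\ceil{a} = \floor{a^\perp}^\perp$ is idempotent; it lies above $a$ since $\floor{a^\perp}\leq a^\perp$; and if $q$ is any idempotent with $a\leq q$, then $q^\perp$ is an idempotent below $a^\perp$, so $q^\perp\leq\floor{a^\perp}$ by part~\ref{prop:ceilfloor-floor} and thus $\ceil{a}\leq q$.

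The one genuinely delicate step is the idempotency of $\floor{a}$. The natural move $\floor{a}\cdot\bigwedge_n a^n = \bigwedge_n \floor{a}\cdot a^n$ would require normality of the multiplication, which is exactly what this section is working towards and so is not yet available. The constant-difference argument is what sidesteps this circularity, replacing normality by the elementary facts that $\floor{a}\cdot c_n$ is independent of $n$ and is bounded above by $c_n$, whose infimum is $0$.
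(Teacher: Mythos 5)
Your proof is correct, and while its overall architecture (duality first via Lemma~\ref{lem:proto-ceilfloor}, then the floor, then the ceiling by dualising part~1) coincides with the paper's, your argument for the one non-trivial step --- idempotency of $\floor{a}$ --- is genuinely different. The paper argues: $\floor{a}\cdot a^\perp = 0$ by Lemma~\ref{lem:flooraa}, hence $\floor{a}\cdot\ceil{a^\perp} = 0$ by Proposition~\ref{prop:ceilprod}, hence $\floor{a}\cdot\floor{a}^\perp = 0$ by the duality of part~3, which is idempotency by Lemma~\ref{lem:idempotentiff}. You instead run a constant-difference argument: from $\floor{a}\cdot a^n=\floor{a}$ and distributivity you get $\floor{a}\ominus\floor{a}^2 = \floor{a}\cdot\bigl(a^n\ominus\floor{a}\bigr)\leq a^n\ominus\floor{a}$ for every $n$, and the infimum of the right-hand side is $0$ by the interval-translation Lemma~\ref{lem:additionisnormal}, so $\floor{a}^2=\floor{a}$. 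Both routes lean on Lemma~\ref{lem:flooraa}, and you correctly diagnose why the naive move $\floor{a}\cdot\bigwedge_n a^n=\bigwedge_n\floor{a}\cdot a^n$ is unavailable (it is exactly the normality proved only later, in Theorem~\ref{thm:multisnormal}). What each approach buys: the paper's route reuses Proposition~\ref{prop:ceilprod} (and through it Lemma~\ref{lem:zerodivsum}), machinery it needs again later anyway (e.g.\ in Theorem~\ref{thm:latticeemon}), so within the paper it costs nothing; your route is more elementary and self-contained, using only basic distributivity and order-theoretic facts, so it would survive in a presentation that postponed or omitted the zero-divisor propositions entirely. A further small merit of your write-up is the explicit check that complements of idempotents are idempotents (via Lemmas~\ref{lem:idempotentiff} and~\ref{lem:ssperpcommute}), a step the paper leaves implicit when it dualises part~1 to obtain part~2.
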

\begin{proof}
Point~\ref{prop:ceilfloor-duality}
    follows from Lemma~\ref{lem:proto-ceilfloor}.
Concerning point \ref{prop:ceilfloor-floor}:
Since $\floor{a} \cdot a^\perp = 0$
    (by Lemma~\ref{lem:flooraa})
    we have $\floor{a}\cdot \ceil{a^\perp}=0$
    by Proposition~\ref{prop:ceilprod},
    and so $\floor{a}\cdot \floor{a}^\perp=0$
    because
     $\floor{a}^\perp = \ceil{a^\perp}$
     by point~\ref{prop:ceilfloor-duality}. Hence $\floor{a}$ is an idempotent.
Also, 
since~$\floor{a}=\bigwedge_n a^n$,
we clearly have~$\floor{a}\leq a$.
Now,
if~$s$ is an idempotent in~$M$
with~$s\leq a$,
then~$s=s^n\leq a^n$,
and so~$s\leq \bigwedge_n a^n\equiv \floor{a}$.
Whence~$\floor{a}$ is the greatest idempotent below~$a$.
Point~\ref{prop:ceilfloor-ceil}
now follows easily from~\ref{prop:ceilfloor-floor},
    since~$\ceil{\,\cdot\,}$ is the dual of~$\floor{\,\cdot\,}$
    under the order anti-isomorphism $(\,\cdot\,)^\perp$.
\end{proof}
\begin{lemma}\label{lem:sumofceil}
$\ceil{a\ovee b}=\ceil{a}\vee \ceil{b}$
for all summable
    elements $a$ and~$b$ of an $\omega$-complete effect monoid~$M$
    (that is,
    $\ceil{a\ovee b}$ is the supremum of~$\ceil{a}$ and~$\ceil{b}$).
\end{lemma}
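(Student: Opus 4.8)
The plan is to show that $\ceil{a\ovee b}$ is precisely the least upper bound of $\ceil{a}$ and $\ceil{b}$ in $M$; the statement of the lemma then amounts to the existence of this supremum together with its identification with $\ceil{a\ovee b}$.

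First I would check that $\ceil{a\ovee b}$ is an upper bound. Since $a,b\leq a\ovee b$ and the ceiling is monotone (any idempotent above $a\ovee b$ lies above $a$ and above $b$, so by Proposition~\ref{prop:ceilfloor} we get $\ceil{a},\ceil{b}\leq \ceil{a\ovee b}$), this is immediate.

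The substantial part is leastness: given any $r\in M$ with $\ceil{a}\leq r$ and $\ceil{b}\leq r$, I must show $\ceil{a\ovee b}\leq r$, and here $r$ need not be an idempotent, so Lemma~\ref{lem:summableunderidempotent} does not apply directly. The key observation is that an idempotent sitting below an element is absorbed multiplicatively: from $\ceil{a}\leq r\leq 1$ and monotonicity of the product one gets $\ceil{a}=\ceil{a}\cdot\ceil{a}\leq \ceil{a}\cdot r\leq \ceil{a}\cdot 1=\ceil{a}$, so $\ceil{a}\cdot r=\ceil{a}$ and hence $\ceil{a}\cdot r^\perp=0$; by Lemma~\ref{lem:idempotentscommute} also $r^\perp\cdot\ceil{a}=0$. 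Since $a\leq\ceil{a}$ this forces $r^\perp\cdot a=0$, and symmetrically $r^\perp\cdot b=0$, whence by distributivity $r^\perp\cdot(a\ovee b)=(r^\perp\cdot a)\ovee(r^\perp\cdot b)=0$. Applying Proposition~\ref{prop:ceilprod} with first factor $r^\perp$ gives $r^\perp\cdot\ceil{a\ovee b}=0$, and commuting the idempotent $\ceil{a\ovee b}$ past $r^\perp$ (Lemma~\ref{lem:idempotentscommute}) yields $\ceil{a\ovee b}\cdot r^\perp=0$. Therefore $\ceil{a\ovee b}=\ceil{a\ovee b}\cdot(r\ovee r^\perp)=\ceil{a\ovee b}\cdot r\leq r$, as required.

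I expect the main obstacle to be exactly this leastness step: one must rule out non-idempotent upper bounds, and the only leverage available at this stage is multiplicative rather than order-theoretic. The crucial trick is to convert the order relation $\ceil{a}\leq r$ into the annihilation identities $r^\perp\cdot a=0=r^\perp\cdot b$, which re-expresses the hypothesis in the language to which Proposition~\ref{prop:ceilprod} applies; the mild asymmetry of that proposition (it enlarges only the right-hand factor to a ceiling) is absorbed using the fact that idempotents are central, by Lemma~\ref{lem:idempotentscommute}.
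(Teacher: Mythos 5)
Your proof is correct, but the leastness step takes a genuinely different route from the paper's. The paper handles a possibly non-idempotent upper bound $u$ by passing to its floor: since $\ceil{a}$ and $\ceil{b}$ are idempotents below $u$, Proposition~\ref{prop:ceilfloor} gives $\ceil{a},\ceil{b}\leq\floor{u}$, hence $a,b\leq\floor{u}$; Lemma~\ref{lem:summableunderidempotent} then yields $a\ovee b\leq\floor{u}$, so $\ceil{a\ovee b}\leq\floor{u}\leq u$ by the least-idempotent-above property of the ceiling. You instead stay entirely multiplicative: you convert $\ceil{a}\leq r$ into the annihilation identity $r^\perp\cdot a=0$ (via the sandwich $\ceil{a}=\ceil{a}\cdot\ceil{a}\leq\ceil{a}\cdot r\leq\ceil{a}$, which is indeed needed, since Lemma~\ref{lem:preserveunderidempotent} addresses elements below an idempotent rather than idempotents below an element), sum the annihilators by distributivity, and bridge from $a\ovee b$ to its ceiling with Proposition~\ref{prop:ceilprod} and centrality of idempotents (Lemma~\ref{lem:idempotentscommute}). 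What the paper's route buys is brevity: taking $\floor{u}$ absorbs the non-idempotency of the upper bound in one stroke, after which everything is order-theoretic. What your route buys is independence from the floor and from Lemma~\ref{lem:summableunderidempotent}: in effect you re-derive that lemma's mechanism (whose own proof is exactly the annihilation computation you perform) in the more general setting where the bounding element need not be idempotent. Both arguments rely only on material available before the lemma, and your upper-bound half is identical to the paper's.
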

\begin{proof}
Since~$\ceil{a\ovee b}\geq a\ovee b \geq a$,
    we have $\ceil{a\ovee b } \geq \ceil{a}$,
    and similarly, $\ceil{a\ovee b}\geq \ceil{b}$.
Let~$u$ be an upper bound of~$\ceil{a}$ and~$\ceil{b}$;
    we claim that~$\ceil{a\ovee b}\leq u$.
    Since~$\ceil{a}\leq u$ and~$\ceil{b}\leq u$, 
    we have $a\leq \ceil{a}\leq \floor{u}$ and
    $b\leq \ceil{b}\leq \floor{u}$,
    and so~$a\ovee b\leq \floor{u}$ by Lemma~\ref{lem:summableunderidempotent}.
    Whence~$\ceil{a\ovee b}\leq \floor{u}\leq u$.
\end{proof}
Any $\omega$-complete effect monoid is a \emph{lattice effect algebra}~\cite{rievcanova2000generalization}:
\begin{theorem}\label{thm:latticeemon}
Any pair of elements $a$ and $b$ from an $\omega$-complete
effect monoid~$M$
has an infimum, $a\wedge b$, given by
$$
    a\wedge b\,=\, \bigovee_{n=1}^\infty a_n\cdot b_n
    \ \text{where}\ 
    \left[\ 
    \begin{alignedat}{3}
        a_1\,&=\, a \quad&& a_{n+1} \,&=\, a_n\cdot b_n^\perp \\
        b_1\,&=\, b && b_{n+1} \,&=\, a_n^\perp \cdot b_n
    \end{alignedat}\right..
$$
Consequently, any pair also has a supremum given by $a\vee b= (a^\perp \wedge b^\perp)^\perp$.
\end{theorem}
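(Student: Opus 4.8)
The plan is to show that the displayed element $c \equiv \bigovee_{n=1}^\infty a_n\cdot b_n$ is the infimum of $a$ and $b$, and then read off the supremum by applying the order anti-isomorphism $(\ )^\perp$. I would first record the two telescoping identities that drive the construction. From $a_n = a_n\cdot(b_n \ovee b_n^\perp) = a_n b_n \ovee a_{n+1}$ and $b_n = (a_n \ovee a_n^\perp)\cdot b_n = a_n b_n \ovee b_{n+1}$ we see that $(a_n)$ and $(b_n)$ are decreasing and that $a_n \ominus a_{n+1} = a_n b_n = b_n \ominus b_{n+1}$. Hence the partial sums telescope: $s_N \equiv \bigovee_{n=1}^N a_n b_n = a \ominus a_{N+1} = b \ominus b_{N+1}$. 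By $\omega$-completeness $c = \bigvee_N s_N$ exists, as do $a_\infty \equiv \bigwedge_n a_n$ and $b_\infty \equiv \bigwedge_n b_n$, and Lemma~\ref{lem:additionisnormal} converts the supremum of the $s_N$ into $c = a \ominus a_\infty = b \ominus b_\infty$. In particular $c \le a$, $c \le b$ with $a = c \ovee a_\infty$ and $b = c \ovee b_\infty$, so $c$ is a lower bound.

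For the greatest-lower-bound property the crucial intermediate fact is that the two remainders annihilate: $a_\infty \cdot b_\infty = 0$. I would obtain this from $a_\infty b_\infty \le a_n b_n = a_n \ominus a_{n+1} \le a_n \ominus a_\infty$ for every $n$ (using $a_\infty \le a_{n+1} \le a_n$), together with $\bigwedge_n (a_n \ominus a_\infty) = (\bigwedge_n a_n)\ominus a_\infty = 0$ from Lemma~\ref{lem:additionisnormal}; thus $a_\infty b_\infty$ lies below an infimum that is $0$.

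Next I would transport this to the idempotent ceilings $p \equiv \ceil{a_\infty}$ and $q \equiv \ceil{b_\infty}$. Proposition~\ref{prop:ceilprod} applied to $a_\infty b_\infty = 0$ gives $a_\infty q = 0$; as $q$ is idempotent it commutes with $a_\infty$ by Lemma~\ref{lem:idempotentscommute}, so $q a_\infty = 0$, and a second application of Proposition~\ref{prop:ceilprod} yields $qp = 0$, hence $pq = 0$. Then $p$, $q$ and $(p\ovee q)^\perp$ are pairwise orthogonal idempotents summing to $1$. Given any common lower bound $d \le a, b$, I would decompose both $d$ and $c$ along these three idempotents and compare them piece by piece: from $a_\infty \le p \le q^\perp$ and $b_\infty \le q \le p^\perp$ one gets $q a_\infty = 0$, $p b_\infty = 0$ and $(p\ovee q)^\perp a_\infty = 0$, whence $qd \le q(c \ovee a_\infty) = qc$, $pd \le p(c \ovee b_\infty) = pc$, and $(p\ovee q)^\perp d \le (p\ovee q)^\perp c$. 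Adding these three inequalities gives $d = pd \ovee qd \ovee (p\ovee q)^\perp d \le pc \ovee qc \ovee (p\ovee q)^\perp c = c$. Finally, since $(\ )^\perp$ is an order anti-isomorphism, $a \vee b = (a^\perp \wedge b^\perp)^\perp$ exists.

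The routine part is the telescoping that exhibits $c$ as a lower bound; the hard part is the greatest-lower-bound direction, and the decisive idea there is to pass from the remainders $a_\infty, b_\infty$ (whose product vanishes but which are not themselves idempotent) to their ceilings $p, q$, thereby upgrading the equation $a_\infty b_\infty = 0$ to a genuine orthogonality $pq = 0$ of idempotents. That orthogonality is exactly what lets an arbitrary common lower bound be split into three pieces, each dominated by the corresponding piece of $c$; I expect this to be the main obstacle, since the purely additive and order-theoretic structure alone is too weak to force $d \le c$.
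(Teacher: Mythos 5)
Your proof is correct and is essentially the paper's own argument: the same telescoping identities exhibit $c=\bigovee_n a_n\cdot b_n$ as a lower bound with $a=c\ovee a_\infty$ and $b=c\ovee b_\infty$ (where $a_\infty\equiv\bigwedge_n a_n$, $b_\infty\equiv\bigwedge_n b_n$), the product $a_\infty\cdot b_\infty$ vanishes, and Proposition~\ref{prop:ceilprod} upgrades this to orthogonality involving idempotent ceilings, along which an arbitrary common lower bound is then decomposed. The only difference is bookkeeping: the paper splits a lower bound along the single idempotent $p=\ceil{b_\infty}$ and its complement (using $p\cdot a_\infty=0$ and $p^\perp\cdot b_\infty=0$), whereas your three-way decomposition into $\ceil{a_\infty}$, $\ceil{b_\infty}$ and $\bigl(\ceil{a_\infty}\ovee\ceil{b_\infty}\bigr)^\perp$ is a slightly more elaborate, but equally valid, version of the same step.
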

\begin{proof}
First order of business is showing 
    that the sum~$\bigovee_{n=1}^N a_n \cdot b_n$ exists for every~$N$.
    In fact,
    we'll show that $a\ominus \bigovee_{n=1}^N a_n\cdot b_n = a_{N+1}$
    for all~$N$,
    by induction.
Indeed, for~$N=1$,
we have~$a\ominus a\cdot b=a\cdot b^\perp = a_2$,
and if
$a\ominus \bigovee_{n=1}^N a_n\cdot b_n = a_{N+1}$
for some~$N$,
then~$a_{N+2} = a_{N+1} \cdot b_{N+1}^\perp
    = a_{N+1} \ominus a_{N+1}\cdot  b_{N+1}
    = (a\ominus \bigovee_{n=1}^N a_n\cdot b_n)\ominus a_{N+1}\cdot b_{N+1}
    = a\ominus \bigovee_{n=1}^{N+1} a_n \cdot b_n$.
    In particular,
    $\bigovee_{n=1}^\infty a_n \cdot b_n$ exists,
    and, moreover,
    $$
    a\ =\ \bigwedge_{m=1}^\infty a_m \ \ovee\ 
    \bigovee_{n=1}^\infty a_n \cdot b_n.
    $$
By a similar reasoning, we get
    $$
    b\ =\ \bigwedge_{m=1}^\infty b_m \ \ovee\ 
    \bigovee_{n=1}^\infty a_n \cdot b_n.
    $$
Already writing~$a\wedge b \equiv
    \bigovee_{n=1}^\infty a_n \cdot b_n$,
    we know at this point that
    $a\wedge b \leq a$ and~$a\wedge b\leq b$.
    It remains to be shown that~$a\wedge b$
    defined above is the greatest lower bound of~$a$ and~$b$.
    So let~$\ell\in M$ with $\ell\leq a$ and~$\ell \leq b$ 
    be given; we must show that~$\ell\leq a\wedge b$.


    As an intermezzo, we observe that~$\bigl(\bigwedge_n a_n\bigr)\cdot
    \bigl(\bigwedge_m b_m\bigr)=0$.
    Indeed, we have
    $\bigl(\bigwedge_n a_n\bigr)\cdot
    \bigl(\bigwedge_m b_m\bigr)
    \leq \bigwedge_n a_n\cdot b_n$,
    and
    $\bigwedge_n a_n \cdot b_n = 0$,
    because~$\bigovee_{n=1}^\infty a_n\cdot b_n$ exists (see Lemma~\ref{lem:archemedeanomegadirectedcomplete}).
    By Proposition~\ref{prop:ceilprod}
    it follows that $\bigl(\bigwedge_n a_n\bigr)\cdot
    \ceil{\bigwedge_m b_m} = 0$.
    Whence writing~$p \equiv \ceil{\bigwedge_m b_m}$,
    we have $p\cdot \bigwedge_n a_n \equiv \bigl(\bigwedge_n a_n\bigr)\cdot p
    = 0$ 
    using Lemma~\ref{lem:idempotentscommute}.
    Observing that $\bigwedge_n b_n\leq p$ and using Lemma~\ref{lem:preserveunderidempotent} we also have $p^\perp \cdot \bigwedge_n b_n = 0$.
    We then calculate  $p\cdot a
    =p\cdot \bigl(\,\bigwedge_n a_n \,\ovee\, a\wedge b\,\bigr) 
    = p\cdot (a\wedge b)$ and similarly
    $p^\perp \cdot b= p^\perp \cdot (a\wedge b)$.

    Returning to the problem of whether~$\ell\leq a\wedge b$,
    we have
   \begin{multline*}
\ell \ =\  
p\cdot \ell\,\ovee\, p^\perp\cdot  \ell
\ \leq\  p\cdot a\,\ovee\, p^\perp\cdot  b \\
    \ =\ p\cdot (a\wedge b) \,\ovee\, p^\perp\cdot (a\wedge b)
     \ = \ a\wedge b.
   \end{multline*}
    Whence~$a\wedge b$ is the infimum of~$a$ and~$b$.
\end{proof}
The presence of finite infima and suprema in $\omega$-complete
effect monoids prevents certain subtleties
around the existence of arbitrary suprema and infima.
\begin{corollary}
\label{cor:intervalsuprema}
Let~$a\leq b$ be elements of an $\omega$-complete
effect monoid~$M$,
    and let $S$ be a non-empty subset of~$[a,b]_M$.

    Then~$S$ has a supremum (infimum) in~$M$
    if and only if~$S$ has a supremum (infimum) in~$[a,b]_M$,
    and these suprema (infima) coincide.
\end{corollary}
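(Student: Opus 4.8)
The plan is to prove both directions of the biconditional, carrying out the supremum case in full and obtaining the infimum case by the order-dual argument. The forward direction is the easy one and was already observed in the proof of Lemma~\ref{lem:additionisnormal}: if $s=\bigvee_M S$ exists, then $s\leq b$ since $b$ is an upper bound of $S$ in $M$, and $a\leq s$ since $S$ is non-empty (pick any $x_0\in S$, so $a\leq x_0\leq s$); hence $s\in[a,b]_M$, and being the least upper bound of $S$ among \emph{all} of $M$ it is a fortiori the least among those upper bounds that happen to lie in $[a,b]_M$. So $s=\bigvee_{[a,b]_M}S$, and the two suprema coincide.

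For the converse I would exploit that $M$ is a lattice by Theorem~\ref{thm:latticeemon}. Suppose $s=\bigvee_{[a,b]_M}S$ exists; then $s$ is certainly an upper bound of $S$ in $M$, and the only thing to show is that it is the \emph{least} such. So let $u\in M$ be an arbitrary upper bound of $S$ in $M$ and consider the meet $u\wedge b$, which exists by Theorem~\ref{thm:latticeemon}. I would then verify that $u\wedge b$ is again an upper bound of $S$ lying in $[a,b]_M$: for every $x\in S$ we have $x\leq u$ and $x\leq b$, hence $x\leq u\wedge b$; moreover $u\wedge b\leq b$, and $a\leq u\wedge b$ because $a\leq b$ together with $a\leq x_0\leq u$ shows $a$ to be a lower bound of $\{u,b\}$. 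Since $s$ is by assumption the least upper bound of $S$ \emph{inside} $[a,b]_M$, it follows that $s\leq u\wedge b\leq u$. As $u$ was arbitrary, $s=\bigvee_M S$, and again the two suprema coincide.

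The infimum case follows by the dual argument: given $t=\bigwedge_{[a,b]_M}S$ and an arbitrary lower bound $\ell\in M$ of $S$, one forms the join $\ell\vee a$ (again available by Theorem~\ref{thm:latticeemon}), checks that $\ell\vee a$ is a lower bound of $S$ lying in $[a,b]_M$, and concludes $\ell\leq\ell\vee a\leq t$; alternatively one applies the supremum case to $\{x^\perp; x\in S\}\subseteq[b^\perp,a^\perp]_M$ and uses that $(\ )^\perp$ is an order anti-isomorphism. I do not expect a genuine obstacle here: the entire content reduces to the existence of the single binary meet $u\wedge b$ (respectively the join $\ell\vee a$). This is precisely where Theorem~\ref{thm:latticeemon} is indispensable, and it is exactly the feature whose absence makes the converse fail for general effect algebras, as noted in the discussion following Lemma~\ref{lem:additionisnormal}.
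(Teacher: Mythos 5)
Your proof is correct and follows essentially the same route as the paper's: the forward direction is immediate, and the converse hinges on forming the meet $u\wedge b$ (available by Theorem~\ref{thm:latticeemon}), checking it is an upper bound of $S$ lying in $[a,b]_M$, and concluding $\bigvee_{[a,b]_M} S \leq u\wedge b\leq u$. The paper likewise dispatches the infimum case by duality, so there is nothing to add.
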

\begin{proof}
It is clear that if~$S$ has a supremum in~$M$, then
    this is also the supremum in~$[a,b]_M$.
For the converse,
suppose that~$S$
has a supremum~$\bigvee S$ in $[a,b]_M$,
and let~$u$ be an upper bound for~$S$ in~$M$;
in order to show that~$\bigvee S$ is the supremum of~$S$
in~$M$ too,
we must prove that~$\bigvee S\leq u$.
    Note that~$b\wedge u$
    is an upper bound for~$S$.
    Indeed, given~$s\in S\subseteq [a,b]_M$
    we have~$s\leq b$, and~$s\leq u$,  so $s\leq b\wedge u$.
    Moreover, one easily sees
    that $b\wedge u \in [a,b]_M$
    using the fact that~$S$ is non-empty.
    Whence~$b\wedge u$ is an upper bound
    for~$S$ in $[a,b]_M$, and so
    $\bigvee S\leq b\wedge u\leq u$,
    making~$\bigvee S$ the supremum of~$S$ in~$M$.
    Similar reasoning applies
    to infima of~$S$.
\end{proof}
\begin{corollary}
\label{cor:sumomegaem}
Given an element~$a$
and a non-empty subset~$S$ 
of an $\omega$-complete
effect monoid~$M$
such that $a\ovee s$ exists for all~$s\in S$,
\begin{enumerate}
\item
the supremum~$\bigvee S$ exists
iff $\bigvee a\ovee S$ exists,
and in that case $a\ovee \bigvee S=\bigvee a\ovee S$;
\item
the infimum~$\bigwedge S$ exists
iff $\bigwedge a\ovee S$ exists,
and in that case $a\ovee \bigwedge S=\bigwedge a\ovee S$.
\end{enumerate}
\end{corollary}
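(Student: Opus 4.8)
The plan is to reduce everything to Lemma~\ref{lem:additionisnormal} and Corollary~\ref{cor:intervalsuprema}. Since $a \ovee s$ exists for each $s \in S$, the set $S$ sits inside $[0,a^\perp]_M$, and correspondingly $a \ovee S \subseteq [a,1]_M$. Of the four implications asked for, two are already contained in Lemma~\ref{lem:additionisnormal}: that existence of $\bigvee S$ (part~1) and of $\bigwedge (a \ovee S)$ (part~2) yield the stated equalities. The genuinely new content is the two reverse implications, namely that $\bigvee (a \ovee S)$ existing forces $\bigvee S$ to exist, and that $\bigwedge S$ existing forces $\bigwedge (a \ovee S)$ to exist.

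The key tool is the order isomorphism $a \ovee (\ )\colon [0,a^\perp]_M \to [a,1]_M$ with inverse $(\ ) \ominus a$, recorded in the proof of Lemma~\ref{lem:additionisnormal}. Being an order isomorphism, it preserves and reflects all suprema and infima that exist \emph{within} these two intervals; thus $S$ has a supremum (infimum) computed in $[0,a^\perp]_M$ exactly when $a \ovee S$ has one in $[a,1]_M$, and the two then correspond under $a \ovee (\ )$. What this does not yet tell us is whether these interval-suprema coincide with suprema taken in all of $M$ — and that is precisely what Corollary~\ref{cor:intervalsuprema} supplies, being the only point at which the lattice structure of Theorem~\ref{thm:latticeemon} is used.

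Concretely, for part~1 I would chain
\begin{align*}
    \bigvee S \text{ exists in } M
    \ &\iff\ \bigvee S \text{ exists in } [0,a^\perp]_M \\
    \ &\iff\ \bigvee (a \ovee S) \text{ exists in } [a,1]_M \\
    \ &\iff\ \bigvee (a \ovee S) \text{ exists in } M,
\end{align*}
where the first and last equivalences are Corollary~\ref{cor:intervalsuprema} applied to $S \subseteq [0,a^\perp]_M$ and to $a \ovee S \subseteq [a,1]_M$, and the middle one is the order isomorphism above. Following the value of the supremum along this chain yields $a \ovee \bigvee S = \bigvee (a \ovee S)$. Part~2 is the same argument verbatim with every $\bigvee$ replaced by $\bigwedge$.

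The only real difficulty is one that has already been dealt with upstream: in a bare effect algebra a supremum of $a \ovee S$ need not descend to a supremum of $S$, since a supremum in a sub-interval can fail to be a supremum in the whole structure. Corollary~\ref{cor:intervalsuprema} removes exactly this obstruction, so after it the present statement is essentially bookkeeping; the only thing to check is that each set lies in the interval to which Corollary~\ref{cor:intervalsuprema} is applied, which holds by construction.
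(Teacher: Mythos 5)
Your proof is correct and follows essentially the same route as the paper: the paper's own proof is precisely to observe that $a\ovee(\ )\colon [0,a^\perp]_M\to[a,1]_M$ is an order isomorphism (hence preserves and reflects suprema and infima between the intervals) and then to apply Corollary~\ref{cor:intervalsuprema} to transfer existence between the intervals and all of $M$. Your chain of equivalences merely spells out the details that the paper leaves implicit.
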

\begin{proof}
The map~$a\ovee(\ )\colon [0,a^\perp]_M\to [a,1]_M$,
being an order isomorphism,
preserves and reflects suprema and infima.
    Now apply Corollary~\ref{cor:intervalsuprema}.
\end{proof}

Now that we know more about the existence of suprema and infima, we set our sights on proving that multiplication interacts with suprema and infima as desired, namely that it preserves them. To do this we introduce a partial division operation.
\begin{definition}
Given elements~$a\leq b$ of an $\omega$-complete
effect monoid, set
\begin{equation*}
    a/b \ \equiv \ \bigovee_{n=0}^\infty a \cdot (b^\perp)^n.
\end{equation*}
    Note that the sum exists,
    because $\bigovee_{n=0}^N
    a\cdot (b^\perp)^n 
    \leq {\bigovee_{n=0}^\infty b \cdot (b^\perp)^n}
    \equiv\ceil{b}$ for all~$N$.
\end{definition}
\begin{lemma}
\label{lem:div}
Let $b$ be an element of an $\omega$-complete
effect monoid~$M$.
\begin{enumerate}
\item
\label{lem:div-adiva}
$b/b = \ceil{b}$.
\item
\label{lem:div-additive}
$(a_1\ovee a_2)/ b = a_1/b\,\ovee a_2/b$
for all summable~$a_1,a_2\in M$ with $a_1\ovee a_2\leq b$.
\item
\label{lem:div-abdivb}
$(a\cdot b)/b = a\cdot \ceil{b}$
for all~$a\in M$.
\item
\label{lem:div-adivbb}
$(a/b)\cdot b = a$
for all~$a\in M$ with~$a\leq b$.
\item
    \label{lem:div-mb}
        $\{ a \cdot b; a \in M \} \equiv  Mb  = [0,b]_M \equiv \{a; a\in M; a \leq b\}$.
\item
    \label{lem:div-iso}
        The maps $a\mapsto a\cdot b,b\cdot a\colon M\ceil{b} \to M b$
are order isomorphisms.
\end{enumerate}
\end{lemma}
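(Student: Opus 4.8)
The plan is to treat the six claims in dependency order, isolating at the outset the single genuinely non-trivial ingredient: a restricted continuity of multiplication. Part~\ref{lem:div-adiva} is immediate, since unfolding the definitions makes $b/b=\bigovee_{n=0}^\infty b\cdot(b^\perp)^n$ literally equal to $\ceil{b}$. Everything else rests on the following observation, which I would prove first. For an increasing sequence $x_0\leq x_1\leq\cdots$ with supremum $x$ and any $c\in M$, one has $\bigvee_N c\cdot x_N=c\cdot x$ and $\bigvee_N x_N\cdot c=x\cdot c$. I would establish this by a monotone-remainder squeeze. First $\bigwedge_N(x\ominus x_N)=0$: if $r$ is this infimum then $x_N\leq x\ominus r$ for all $N$ (applying the anti-isomorphism $x\ominus(\ )$ on $[0,x]_M$), so $x\ominus r$ bounds the $x_N$ and hence $x\leq x\ominus r$, forcing $r=0$. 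Next, finite additivity and $c\leq 1$ give $c\cdot x\ominus c\cdot x_N=c\cdot(x\ominus x_N)\leq 1\cdot(x\ominus x_N)=x\ominus x_N$, so $\bigwedge_N(c\cdot x\ominus c\cdot x_N)=0$; applying the same anti-isomorphism argument to $s=\bigvee_N c\cdot x_N$ yields $c\cdot x\ominus s=0$, i.e.\ $s=c\cdot x$. Crucially this uses only finite additivity of the product, never the normality that is still to be proved.

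With this in hand, parts~\ref{lem:div-abdivb} and~\ref{lem:div-adivbb} follow. For part~\ref{lem:div-abdivb}, finite additivity gives $\bigovee_{n=0}^N(a\cdot b)\cdot(b^\perp)^n=a\cdot c_N$, where $c_N\equiv\bigovee_{n=0}^N b\cdot(b^\perp)^n$ increases to $\ceil{b}$, so the continuity lemma gives $(a\cdot b)/b=\bigvee_N a\cdot c_N=a\cdot\ceil{b}$. For part~\ref{lem:div-adivbb}, I would telescope: using Lemma~\ref{lem:proto-ceilfloor} applied to $b^\perp$ one finds $c_N=1\ominus(b^\perp)^{N+1}$, and an analogous telescoping gives $q_N\cdot b=a\ominus a\cdot(b^\perp)^{N+1}$ for the partial sums $q_N\equiv\bigovee_{n=0}^N a\cdot(b^\perp)^n$. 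Since $q_N\uparrow a/b$, the continuity lemma gives $(a/b)\cdot b=\bigvee_N q_N\cdot b=a\ominus\bigwedge_N a\cdot(b^\perp)^{N+1}$, and the remaining infimum vanishes because $a\cdot(b^\perp)^n\leq b\cdot(b^\perp)^n$ (as $a\leq b$) while $\bigwedge_n b\cdot(b^\perp)^n=0$ by Lemma~\ref{lem:infaperpan} applied to $b^\perp$. Part~\ref{lem:div-additive} I would obtain from finite bi-additivity followed by applications of Corollary~\ref{cor:sumomegaem} to interchange the outer sum with the suprema; it also immediately shows that $c\mapsto c/b$ is monotone on $[0,b]_M$, since $c\leq c'\leq b$ gives $c'/b=c/b\ovee(c'\ominus c)/b$.

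The last two parts are then bookkeeping. Part~\ref{lem:div-mb}: the inclusion $Mb\subseteq[0,b]_M$ holds because $a\cdot b\leq 1\cdot b=b$, and the reverse holds because any $c\leq b$ equals $(c/b)\cdot b\in Mb$ by part~\ref{lem:div-adivbb}. For part~\ref{lem:div-iso}, note $M\ceil{b}=[0,\ceil{b}]_M$ by Lemma~\ref{lem:preserveunderidempotent} and $Mb=[0,b]_M$ by part~\ref{lem:div-mb}. The right-multiplication map $a\mapsto a\cdot b$ sends $[0,\ceil{b}]_M$ into $[0,b]_M$ (since $a\leq\ceil{b}$ gives $a\cdot b\leq\ceil{b}\cdot b=b$), is order preserving, and has the division $c\mapsto c/b$ as two-sided inverse: parts~\ref{lem:div-abdivb} and~\ref{lem:div-adivbb} give $(a\cdot b)/b=a\cdot\ceil{b}=a$ for $a\leq\ceil{b}$ and $(c/b)\cdot b=c$ for $c\leq b$, and since division is monotone the map is an order isomorphism. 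The left-multiplication map $a\mapsto b\cdot a$ is handled by the mirror-image argument, defining a left division $\bigovee_n(b^\perp)^n\cdot c$ and using the left-handed analogues of Lemmas~\ref{lem:proto-ceilfloor} and~\ref{lem:infaperpan}, all of which hold by the left--right symmetry of the preceding development.

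The step I expect to be the main obstacle is exactly the continuity lemma, since it is a multiplication--supremum interchange and the general normality of multiplication is only established later, using this very lemma, so it cannot be assumed here. What makes it go through without normality is that the suprema involved are suprema of increasing sequences whose remainders $x\ominus x_N$ decrease to $0$, and multiplication by a sub-unit $c\leq 1$ can only shrink a remainder; this squeeze is genuinely weaker than, and safely precedes, full normality, which concerns arbitrary (possibly uncountable) directed suprema. A secondary nuisance is keeping the handedness straight in part~\ref{lem:div-iso}, where the division defined in the text inverts only the right-multiplication map and the left map requires its mirror image.
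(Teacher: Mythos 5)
Your proof is correct, but it takes a genuinely different route from the paper's on the two core items, \ref{lem:div-abdivb} and \ref{lem:div-adivbb}. The paper proves no continuity statement at this stage at all: it only uses the trivial inequality $\bigovee_n a\cdot b\cdot(b^\perp)^n \leq a\cdot\ceil{b}$ (finite distributivity plus monotonicity of suprema) and then closes the gap with the squeeze of Lemma~\ref{lem:forcing} --- proving the same one-sided bound for the complementary piece ($a^\perp$ in part~\ref{lem:div-abdivb}, $b\ominus a$ in part~\ref{lem:div-adivbb}), summing, and letting $\ceil{b} = b/b$ resp.\ $b = \ceil{b}\cdot b$ force both inequalities to be equalities. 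You instead front-load the analytic content into a sequential continuity lemma ($c\cdot\bigvee_N x_N = \bigvee_N c\cdot x_N$ for increasing sequences), proved by your monotone-remainder squeeze, and then parts~\ref{lem:div-abdivb} and~\ref{lem:div-adivbb} become direct telescoping computations. Your continuity lemma is sound: it needs only finite distributivity, monotonicity of $\ominus$, and $\omega$-completeness, so there is no circularity with Theorem~\ref{thm:multisnormal} --- you correctly identified and defused exactly the trap here, namely that full normality is proved \emph{from} this lemma (via the order isomorphisms of part~\ref{lem:div-iso}) and so cannot be invoked in it. What each approach buys: the paper's forcing trick keeps the lemma's prerequisites to an absolute minimum and lets all continuity of multiplication flow out of the division structure afterwards; your approach establishes early that the \emph{sequential} case of normality is available independently (in effect a standalone weak normality result), at the cost of some redundancy with the paper's later, stronger Theorem~\ref{thm:multisnormal}, which still requires the part~\ref{lem:div-iso} argument to cover arbitrary, possibly uncountable and non-directed, suprema. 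The remaining parts (\ref{lem:div-adiva}, \ref{lem:div-additive}, \ref{lem:div-mb}, \ref{lem:div-iso}, including the left-handed division $b\backslash a \equiv \bigovee_n (b^\perp)^n\cdot a$ for the map $a \mapsto b\cdot a$) match the paper's treatment.
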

\begin{proof}
    Points~\ref{lem:div-adiva} and~\ref{lem:div-additive} are easy,
and left to the reader.
Concerning~\ref{lem:div-abdivb},
first note that
\begin{equation*}
    (a\cdot b)/b\ \equiv\ 
    \bigovee_{n=0}^\infty a \cdot b \cdot (b^\perp)^n
\ \leq\ 
    a \cdot \bigovee_{n=0}^\infty b \cdot (b^\perp)^n
    \ = \ a\cdot \ceil{b}.
\end{equation*}
Thus $(a \cdot b)/b\leq a\cdot \ceil{b}$.
    Since similarly $(a^\perp \cdot b)/b\leq a^\perp\cdot \ceil{b}$,
    we get, using~\ref{lem:div-additive},
\begin{equation*}
\ceil{b}\ = \ 
b/b\ = \ 
    (a\cdot b)/b\,\ovee\,(a^\perp \cdot b)/b
    \ \leq\ a\cdot \ceil{b}\,\ovee\,a^\perp \cdot\ceil{b}
    \ =\ \ceil{b},
\end{equation*}
forcing~$(a\cdot b)/b = a\cdot \ceil{b}$
    (see Lemma~\ref{lem:forcing}).
For point~\ref{lem:div-adivbb},
note that given~$a,b \in M$ with~$a \leq b$
we have
    $a=a\cdot \ceil{b}$ (by Lemma~\ref{lem:preserveunderidempotent}, 
    since $a\leq b\leq \ceil{b}$,) and so
    \begin{alignat*}{3}
        a 
         \ =\ a\cdot \ceil{b} 
        & \ =\ (a\cdot b)/b &\qquad& \text{by point~\ref{lem:div-abdivb}}\\
        & \ \equiv\  \bigovee_{n=0}^\infty  a\cdot b\cdot (b^\perp)^n \\
        & \ =\  \bigovee_{n=0}^\infty a\cdot (b^\perp)^n \cdot b 
                    && \text{by Lemma~\ref{lem:ssperpcommute}} \\
        & \ \leq\  \Bigl(\bigovee_{n=0}^\infty a\cdot (b^\perp)^n\Bigr) \cdot b  \\
        & \ =\ (a/b)\cdot b.
    \end{alignat*}
    Since similarly $b\ominus a\leq ((b\ominus a)/b)\cdot b$,
    we get
\begin{multline*}
    b\ =\ a\ovee (b\ominus a) 
    \ \leq\ 
    (a/b)\cdot b\,\ovee\,((b\ominus a)/b)\cdot b \\
     \ =\ (b/b)\cdot b =\ceil{b}\cdot b\ =\ b,
\end{multline*}
which forces~$a=(a/b)\cdot b$.
For point~\ref{lem:div-mb},
    note that~$Mb, bM\subseteq [0,b]_M$
    since~$b\cdot a, a\cdot b\leq b$ for all~$a\in M$,
    and~$[0,b]_M\subseteq Mb$,
    because~$a=(a/b)\cdot b$ for any~$a\in [0,b]_M$
    by point~\ref{lem:div-adivbb}.

Finally, 
concerning
    point~\ref{lem:div-iso}:
    the maps 
     $a\mapsto a\cdot b\colon M \ceil{b}
    \to M b$
and  $a\mapsto a/b\colon M b
    \to M \ceil{b}$
    are clearly order preserving,
    and each other's inverse
    by points~\ref{lem:div-abdivb} and~\ref{lem:div-adivbb},
    and thus order isomorphisms.
    The proof that~$a\mapsto b\cdot a\colon M \ceil{b} \to M b$
    is an order isomorphism follows along entirely similar lines,
    but involves $b\backslash a$ defined by 
    $b\backslash a\equiv \bigovee_n (b^\perp)^n \cdot a$
    and uses the fact that~$bM = [0,b]_M = M b$.
\end{proof}
\begin{remark}
From the previous lemma it follows
    that any~$\omega$-complete effect monoid 
    is a so called \emph{effect divisoid} \cite[§195]{basthesis}.
The converse is false:
    later on we will show that any~$\omega$-complete
    effect monoid is commutative,
        but there exists a non-commutative effect divisoid.
    \cite[Ex.~4.3.9]{kentathesis}\footnote{%
        Cho shows that there is a non-commutative \emph{division effect monoid}.
        Any division effect monoid is an effect divisoid as well.}.
\end{remark}

Finally, we can prove that multiplication is indeed normal:
\begin{theorem}\label{thm:multisnormal}
    \label{prop:bsupinf}
    Let $b$ and~$b'$ be elements
    of an $\omega$-complete
    effect monoid~$M$,
    and let $S\subseteq M$ be any 
    (potentially uncountable or non-directed) non-empty subset.
    \begin{enumerate}
        \item \label{prop:bsupinf-supp}
            If $\bigvee S$ exists, then
            so does $\bigvee_{s\in S} b\cdot s \cdot b'$,
            and $b\cdot (\bigvee S ) \cdot b'
            = \bigvee_{s\in S} b\cdot s\cdot b'$.
        \item
            \label{prop:bsupinf-inf}
        If $\bigwedge S$ exists, then
            so does $\bigwedge_{s\in S} b\cdot s \cdot b'$,
            and $b\cdot (\bigwedge S) \cdot b' = \bigwedge_{s\in S} 
            b\cdot s\cdot b'$.
    \end{enumerate}
\end{theorem}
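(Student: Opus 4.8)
The plan is to write the two-sided multiplication $s\mapsto b\cdot s\cdot b'$ as a composite of maps that each preserve suprema and infima, in the sense that if $\bigvee S$ (resp.\ $\bigwedge S$) exists then so does the supremum (resp.\ infimum) of the image, with the two agreeing. Since $b\cdot s\cdot b'=(b\cdot s)\cdot b'$ by associativity, it is enough to treat left multiplication $s\mapsto b\cdot s$ and right multiplication $s\mapsto s\cdot b'$ separately and then compose; and as every map below preserves both suprema and infima, the two cases of the theorem can be dealt with in parallel. Note that none of the ingredients will care about the cardinality or directedness of $S$: $\omega$-completeness is used only to build $\ceil{b}$, $\ceil{b'}$ and the division isomorphisms of Lemma~\ref{lem:div}, after which everything applies verbatim to arbitrary non-empty $S$.

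First I would record that multiplication by an idempotent $p$ preserves all suprema and infima. Indeed, the corner isomorphism $M\cong pM\oplus p^\perp M$ of Corollary~\ref{cor:corneriso} is an order isomorphism, and suprema and infima in a direct sum are computed coordinatewise; hence the coordinate map $s\mapsto p\cdot s$ carries an existing $\bigvee S$ (resp.\ $\bigwedge S$) to the supremum (resp.\ infimum) of $\{p\cdot s\,;\,s\in S\}$ taken in the corner $pM=[0,p]_M$. By Corollary~\ref{cor:intervalsuprema} this agrees with the supremum (infimum) in $M$, so that $p\cdot\bigvee S=\bigvee_{s}p\cdot s$ and dually.

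The second ingredient is point~\ref{lem:div-iso} of Lemma~\ref{lem:div}, which says that $a\mapsto b\cdot a$ is an order isomorphism $M\ceil{b}\to Mb$; being an order isomorphism it preserves and reflects suprema and infima. For arbitrary $s\in M$ we have $b\cdot s=b\cdot(\ceil{b}\cdot s)$, since $b\cdot\ceil{b}=b$ as $b\leq\ceil{b}$ (Lemma~\ref{lem:preserveunderidempotent}), and $\ceil{b}\cdot s=s\cdot\ceil{b}\in M\ceil{b}$ (Lemma~\ref{lem:idempotentscommute}). Thus left multiplication by $b$ factors as the idempotent map $s\mapsto\ceil{b}\cdot s$ followed by the order isomorphism $a\mapsto b\cdot a\colon M\ceil{b}\to Mb$. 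The first factor preserves suprema and infima by the previous paragraph, and the second does so as an order isomorphism, where, invoking Corollary~\ref{cor:intervalsuprema} once more, the suprema and infima computed in the corners $M\ceil{b}=[0,\ceil{b}]_M$ and $Mb=[0,b]_M$ coincide with those in $M$. Hence left multiplication by $b$ preserves suprema and infima. Right multiplication by $b'$ is handled identically, factoring $s\cdot b'=(s\cdot\ceil{b'})\cdot b'$ through the order isomorphism $a\mapsto a\cdot b'$. Composing the two maps gives the theorem.

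The step needing the most care is the repeated passage between suprema and infima computed inside the corners (equivalently, the order-intervals $[0,p]_M$) and those computed in all of $M$. This is exactly what Corollary~\ref{cor:intervalsuprema} supplies, and it is what legitimises freely composing the idempotent corner maps with the division isomorphisms; without it one could only conclude that the relevant suprema exist \emph{relative to} a sub-interval.
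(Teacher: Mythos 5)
Your proof is correct, and it takes a genuinely different route from the paper's, even though both pivot on the same two key results: the division order isomorphism of Lemma~\ref{lem:div}\eqref{lem:div-iso} and the interval-suprema Corollary~\ref{cor:intervalsuprema}. The common obstacle is that $S$ need not lie in $M\ceil{b}=[0,\ceil{b}]_M$, the domain of that isomorphism. The paper gets around this by replacing $b$ with $b\ovee\ceil{b}^\perp$, whose ceiling is $1$ by Lemma~\ref{lem:sumofceil}, so that multiplication by it is an order isomorphism defined on all of $M$; it then recovers the statement for $b$ itself by a sandwich argument using Lemma~\ref{lem:forcing}. You instead precompose with the idempotent map $s\mapsto\ceil{b}\cdot s$, and justify that this map preserves arbitrary existing suprema and infima via the corner isomorphism $M\cong\ceil{b}M\oplus\ceil{b}^\perp M$ of Corollary~\ref{cor:corneriso} together with the fact that suprema and infima in a direct sum are computed coordinatewise. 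Your factorisation is more modular: it isolates the reusable fact that multiplication by an idempotent is normal, avoids the forcing trick and Lemma~\ref{lem:sumofceil} entirely, and makes the treatment of infima and of two-sided multiplication genuinely parallel rather than ``left to the reader.'' What the paper's version buys in exchange is self-containedness: it never needs the coordinatewise-suprema fact for direct sums, which you assert without proof. That fact is standard and does hold here (an upper bound in $pM\oplus p^\perp M$ is exactly a pair of coordinatewise upper bounds, and one can pad a single-coordinate upper bound with the top element in the other coordinate), but a fully rigorous write-up of your argument would include this short verification. No circularity issues arise: everything you cite (Corollaries~\ref{cor:corneriso} and~\ref{cor:intervalsuprema}, Lemmas~\ref{lem:preserveunderidempotent}, \ref{lem:idempotentscommute} and~\ref{lem:div}) precedes Theorem~\ref{thm:multisnormal} in the paper.
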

\begin{proof}
Suppose that~$\bigvee S$ exists.
    We will prove that~$b\cdot \bigvee S=\bigvee_{s\in S} b\cdot s$,
and leave the remainder to the reader.
    Note that  $b\cdot(\ )\colon [0,\ceil{b}]_M\to [0,b]_M$,
    being an order isomorphism by Lemma~\ref{lem:div}\eqref{lem:div-iso},
    preserves suprema and infima.
    The set~$S$ need, however, not be part of $[0,\ceil{b}]_M$,
    so we consider instead of~$b$
    the element $b'\equiv  b\ovee \ceil{b}^\perp$,
    for which~$\ceil{b'}=\ceil{b}\vee \ceil{b}^\perp =1$
    by Lemma~\ref{lem:sumofceil}.
    We then get an order isomorphism~${b'\cdot(\ )\colon M\to [0,b']_M}$,
    which preserves suprema,
    so that~$b'\cdot \bigvee S$
is the supremum of $b'\cdot S$ in $[0,b']_M$,
    and hence in~$M$, by Corollary~\ref{cor:intervalsuprema}.
    Then
\begin{alignat*}{3}
    (b\ovee \ceil{b}^\perp)\cdot \bigvee S 
    \ &=\ \bigvee_{s\in S} (b\ovee \ceil{b}^\perp)\cdot s \\
    \ &\leq\ \bigvee_{s\in S} b\cdot  s 
    \ \ovee\ \bigvee_{s'\in S} \ceil{b}^\perp \cdot   s' \\
    \ &\leq\ b\cdot \bigvee S \ \ovee\ \ceil{b}^\perp\cdot \bigvee S \\
    \ & = \ (b\ovee \ceil{b}^\perp)\cdot\bigvee S
\end{alignat*}
forces $\bigvee_{s\in S} b\cdot s= b\cdot \bigvee S$
    (see Lemma~\ref{lem:forcing}).
\end{proof}

\section{Boolean algebras, halves and convexity}\label{sec:boolhalvesconvexity}
We are ready to study the two important types of idempotents
    in an effect monoid: those that are \emph{Boolean}
    and those that are \emph{halvable}.

\begin{definition}
We say that an element~$a$ of an effect monoid~$M$
    is \Define{Boolean} when each~$b\leq a$
    is idempotent. We say an effect monoid is Boolean
    when $1$ is Boolean.
\end{definition}

\begin{proposition}\label{prop:booleanalgebra}
    The set of idempotents~$P(M)$ of an effect monoid~$M$
        is a Boolean algebra.
        Thus an effect monoid is Boolean iff it is a Boolean algebra.
\end{proposition}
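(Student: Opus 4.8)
The plan is to turn $P(M)$ into a lattice using the monoid multiplication and sum, check that $(\ )^\perp$ restricts to an orthocomplementation on it, verify the orthomodular law, and then invoke the converse statement of Example~\ref{ex:booleanalgebra}: an orthomodular lattice on which $\wedge$ distributes over $\ovee$ is automatically a Boolean algebra. Crucially I would keep the whole argument inside general effect monoids (no $\omega$-completeness), so I may only use Lemmas~\ref{lem:ssperpcommute}, \ref{lem:idempotentiff}, \ref{lem:preserveunderidempotent}, \ref{lem:idempotentscommute} and~\ref{lem:summableunderidempotent}. The second assertion then falls out immediately, since ``$M$ is Boolean'' unfolds to ``every $b\leq 1$ is idempotent'', i.e.\ $M=P(M)$.

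First I would record the closure properties. By Lemma~\ref{lem:idempotentscommute} any two idempotents commute, and from Lemma~\ref{lem:idempotentiff} one sees that $p^\perp$ is idempotent whenever $p$ is, because its defining condition $p^\perp\cdot p=0$ is exactly $p\cdot p^\perp=0$. A one-line computation using commutativity shows that $p\cdot q$ is idempotent for idempotent $p,q$, and that the sum $p\ovee q$ of two \emph{orthogonal} idempotents (those with $p\cdot q=q\cdot p=0$) is again idempotent. I would then exhibit the lattice operations: $p\wedge q:=p\cdot q$ is the greatest lower bound inside $P(M)$ — it is a lower bound by Lemma~\ref{lem:preserveunderidempotent}, and any idempotent $r$ below both satisfies $r=p\cdot q\cdot r\leq p\cdot q$ — while $p\vee q:=p\ovee(p^\perp\cdot q)$ is the least upper bound, with minimality among idempotent upper bounds supplied by Lemma~\ref{lem:summableunderidempotent}. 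A short calculation gives $p\vee q=(p^\perp\cdot q^\perp)^\perp$, exhibiting $\vee$ as the De~Morgan dual of $\wedge$.

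Next I would check that $(\ )^\perp$ is an orthocomplementation on the bounded lattice $P(M)$: it is an order-reversing involution already on $M$, and $p\wedge p^\perp=p\cdot p^\perp=0$ together with its dual $p\vee p^\perp=1$ make $P(M)$ an ortholattice. Orthomodularity is then immediate: for $p\leq q$ one has $q=q\cdot(p\ovee p^\perp)=p\ovee(q\cdot p^\perp)=p\vee(q\wedge p^\perp)$ using $q\cdot p=p$. Finally, distributivity of $\wedge$ over $\ovee$ is handed to us by the effect-monoid axioms: for orthogonal idempotents $q,r$ the effect-algebra sum inherited by $P(M)$ coincides with the monoid sum $q\ovee r$, so bi-additivity of $\cdot$ gives $p\cdot(q\ovee r)=(p\cdot q)\ovee(p\cdot r)$, and one checks that $p\cdot q$ and $p\cdot r$ are again orthogonal, so the right-hand side is exactly $(p\wedge q)\vee(p\wedge r)$. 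By Example~\ref{ex:booleanalgebra}, $P(M)$ is a Boolean algebra.

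I do not expect a genuine obstacle: the argument is careful bookkeeping of the commutation and annihilation identities already assembled above, together with the fact that distributivity of $\cdot$ over $\ovee$ is built into the definition of an effect monoid. The one place that demands care is confirming that the candidate meet and join are the \emph{genuine} lattice operations on the induced poset $P(M)$ rather than mere bounds, and that the effect-algebra sum carried by the ortholattice $P(M)$ really agrees with the restriction of the monoid sum $\ovee$ to orthogonal idempotents; both reduce to Lemmas~\ref{lem:preserveunderidempotent} and~\ref{lem:summableunderidempotent}. For the biconditional: if $1$ is Boolean then by definition every $b\leq 1$ is idempotent, so $M=P(M)$ is a Boolean algebra; conversely any Boolean algebra satisfies $x\cdot x=x\wedge x=x$, so all of its elements are idempotent and $1$ is Boolean.
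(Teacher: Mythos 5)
Your proof is correct, and it shares the paper's skeleton: both you and the paper build the lattice structure on $P(M)$ by taking $p\wedge q = p\cdot q$ (closure and the greatest-lower-bound property coming from Lemmas~\ref{lem:idempotentscommute} and~\ref{lem:preserveunderidempotent}) and the join $p\vee q = p\ovee(p^\perp\cdot q) = (p^\perp\cdot q^\perp)^\perp$. The closing step, however, is genuinely different. The paper concludes via ``complemented lattice $+$ distributivity'': it verifies the \emph{full} distributive law $p\wedge(q\vee r) = (p\wedge q)\vee(p\wedge r)$ by direct expansion (the ``straightforward exercise''), so that $P(M)$ is a complemented distributive lattice, hence Boolean. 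You instead verify the orthomodular law plus only the \emph{restricted} distributivity of $\wedge$ over sums of orthogonal idempotents, and then invoke the converse clause of Example~\ref{ex:booleanalgebra}: an orthomodular lattice in which $\wedge$ distributes over $\ovee$ is a Boolean algebra. Your restricted distributivity is genuinely cheaper --- it is immediate from bi-additivity of the product once one checks, as you carefully do, that the monoid sum of orthogonal idempotents coincides with their join --- whereas the paper's full distributive law requires an honest computation. The price is that the real content is outsourced to the quoted orthomodular-lattice fact, which the paper states in Example~\ref{ex:booleanalgebra} without proof; it is standard (restricted distributivity gives $p = (p\wedge q)\vee(p\wedge q^\perp)$ for all $p,q$, i.e.\ every pair commutes, and an orthomodular lattice in which all pairs commute is Boolean), but it is not nothing, so your argument is only as self-contained as that citation. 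One small caveat in your final biconditional: ``any Boolean algebra satisfies $x\cdot x = x\wedge x$'' presupposes that the monoid product on an effect monoid whose underlying effect algebra is Boolean is the meet, which is not given a priori. The clean fix is to note that monotonicity of the product gives $x\cdot x^\perp \leq x\wedge x^\perp = 0$, so every element is idempotent by Lemma~\ref{lem:idempotentiff}; to be fair, the paper does not spell out this direction at all.
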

\begin{proof}
    First we will show that in fact~$p\cdot q = p\wedge q$
        for~$p,q \in P(M)$, where the infimum~$\wedge$ is taken in~$M$.
    Using Lemma~\ref{lem:idempotentscommute},
    we see~$(p \cdot q)^2 = p\cdot q\cdot p\cdot q = p\cdot
    p\cdot q\cdot q = p\cdot q$ and so~$p \cdot q$ is an idempotent.
    Let~$r \leq p, q$.
    Then~$r \cdot p = r$ and~$r \cdot q = r$
        so that~$r \cdot p \cdot q = r$, and
        hence~$r \leq p \cdot q$ by~Lemma~\ref{lem:preserveunderidempotent},
        which shows~$p \cdot q = p \wedge q$.
     As the complement is an order anti-isomorphism,
        we find~$p\vee q = (p^\perp \wedge q^\perp)^\perp$ and
    hence $P(M)$ is a complemented lattice.
    It remains to show that
    it satisfies distributivity: $p\wedge(q\vee r) = (p\wedge q)\vee
    (p\wedge r)$.
    By uniqueness of complements,
     it is easily shown that~$p\vee q
     = p \ovee (p^\perp \cdot q) = q \ovee ( p\cdot q^\perp)$.
    The remainder is a straightforward exercise in writing out
    the expressions $p\wedge(q\vee r)$ and $(p\wedge q)\vee
        (p\wedge r)$ and noting that they are equal.
\end{proof}

\begin{proposition}\label{prop:completelattice}
If~$M$ is $\omega$-complete,
    then the Boolean algebra of projections~$P(M)$ is $\omega$-complete.
\end{proposition}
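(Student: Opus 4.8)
The plan is to reduce the claim to a single fact: the supremum taken in~$M$ of an increasing sequence of idempotents is again an idempotent. Since $P(M)$ is a Boolean algebra by Proposition~\ref{prop:booleanalgebra}, it already has all finite joins, so to witness $\omega$-completeness it suffices to produce a supremum \emph{in}~$P(M)$ for every increasing sequence $p_1\leq p_2\leq\cdots$ of idempotents (an arbitrary countable family of idempotents can be replaced by the increasing sequence of its finite joins). First I would form $p\equiv\bigvee_n p_n$, which exists in~$M$ because $M$ is $\omega$-complete. If $p$ turns out to be idempotent, then it lies in $P(M)$ and is an upper bound there; and any upper bound of the $p_n$ inside $P(M)$ is in particular an upper bound in~$M$, hence dominates~$p$. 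So $p$ would be the supremum of the sequence in $P(M)$, as required.

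The heart of the argument is thus showing $p\in P(M)$, and here I would lean on the normality of multiplication from Theorem~\ref{thm:multisnormal}. By Lemma~\ref{lem:idempotentiff} it is enough to check that $p\cdot p^\perp=0$. For each~$n$ we have $p_n\leq p$, hence $p^\perp\leq p_n^\perp$, and combining monotonicity of multiplication (a direct consequence of bi-additivity) with $p_n\cdot p_n^\perp=0$ (Lemma~\ref{lem:idempotentiff} again, since $p_n$ is idempotent) gives
\begin{equation*}
    p_n\cdot p^\perp\ \leq\ p_n\cdot p_n^\perp\ =\ 0,
\end{equation*}
so $p_n\cdot p^\perp=0$ for every~$n$. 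Applying Theorem~\ref{thm:multisnormal}\eqref{prop:bsupinf-supp} with $S=\{p_n\}_n$, $b=1$ and $b'=p^\perp$ then yields
\begin{equation*}
    p\cdot p^\perp\ =\ \Bigl(\bigvee_n p_n\Bigr)\cdot p^\perp\ =\ \bigvee_n\bigl(p_n\cdot p^\perp\bigr)\ =\ 0,
\end{equation*}
whence $p$ is idempotent.

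The only genuine obstacle is this identity $p\cdot p^\perp=0$: without some control over how multiplication interacts with the supremum, there is a priori no reason the join of idempotents should itself be idempotent. Normality (Theorem~\ref{thm:multisnormal}) is precisely the tool that supplies this control, so the proof really rests on having that result available; the remaining bookkeeping---reducing countable families to increasing sequences, and transferring the supremum from~$M$ back into $P(M)$---is routine.
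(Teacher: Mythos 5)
Your proof is correct, and its outer skeleton---replace a countable family of idempotents by the increasing sequence of its finite joins (available since $P(M)$ is a Boolean algebra by Proposition~\ref{prop:booleanalgebra}), take the supremum $p$ of that sequence in $M$, and note that any upper bound in $P(M)$ is in particular an upper bound in $M$ and hence dominates $p$---is exactly the paper's. Where you genuinely diverge is on the step you rightly single out as the heart of the matter: showing $p\in P(M)$. The paper's own proof never addresses this point; it only verifies that $p$ is the least upper bound of the family \emph{in}~$M$, and silently treats that as a supremum in $P(M)$, which presupposes that the supremum in $M$ of an increasing sequence of idempotents is again idempotent. Your argument---$p_n\cdot p^\perp\leq p_n\cdot p_n^\perp=0$ by monotonicity, hence $p\cdot p^\perp=\bigvee_n p_n\cdot p^\perp=0$ by Theorem~\ref{thm:multisnormal}, hence $p$ idempotent by Lemma~\ref{lem:idempotentiff}---supplies precisely this missing justification, and invoking Theorem~\ref{thm:multisnormal} is legitimate here since it precedes this proposition in the paper's development. (A marginally lighter alternative: write $p$ as the countable sum $p_1\ovee(p_2\ominus p_1)\ovee(p_3\ominus p_2)\ovee\cdots$, note $p^\perp\cdot(p_{n+1}\ominus p_n)\leq p^\perp\cdot p_{n+1}=0$, and apply Lemma~\ref{lem:zerodivsum} to get $p^\perp\cdot p=0$; this avoids the full strength of normality.) So your proposal is not merely correct: on the one non-trivial point it is more complete than the paper's own proof.
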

\begin{proof}
    Let~$A \subseteq P(M)$ be a countable subset. Pick an enumeration of 
    its elements $p_1,p_2,\ldots$. Let $q_n$ be iteratively defined
    as $q_1 \equiv p_1$ and $q_n \equiv q_{n-1}\vee p_n$.
    Then the $q_n$ form an increasing sequence and hence it has
    a supremum $q$. We claim that $q$ is also the supremum of $A$.
    Of course $q\geq q_n \geq p_n$ and hence $q$ is an upper bound.
    Suppose that $r\geq p_n$ for all $n$. Then $r\geq q_1$, and hence by induction
    if $r\geq q_n$ then $r\geq q_n \vee p_n = q_{n+1}$. Hence also $r\geq q$. \qedhere
    
\end{proof}

\begin{proposition}\label{prop:sharpeffectmonoidisbooleanalgebra}
    Let $M$ be an $\omega$-complete Boolean effect monoid. 
    Then $M$ is an $\omega$-complete Boolean algebra.
\end{proposition}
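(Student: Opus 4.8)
The plan is to reduce the statement to the two preceding propositions, since almost all of the work has already been done. First I would unpack the hypothesis: by definition $M$ is \emph{Boolean} when $1$ is Boolean, that is, when every $b \leq 1$ is idempotent. As every element of $M$ lies below $1$, this says precisely that $M = P(M)$, so that every element of $M$ is a projection.

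By Proposition~\ref{prop:booleanalgebra} the set of idempotents $P(M)$ is a Boolean algebra, with meet $p \wedge q = p \cdot q$, complement the effect-algebraic $p^\perp$, and join $p \vee q = (p^\perp \wedge q^\perp)^\perp$. Since $M = P(M)$, the underlying set of $M$ therefore carries a Boolean algebra structure. The one genuine thing to verify is that this Boolean algebra induces, via Example~\ref{ex:booleanalgebra}, exactly the original effect monoid structure on $M$. This is a short check: the multiplications agree by Proposition~\ref{prop:booleanalgebra}; two projections $p,q$ are orthogonal as effect algebra elements (i.e.\ $p \perp q$, equivalently $p \leq q^\perp$) precisely when $p \wedge q = p \cdot q = 0$, using $q^\perp \cdot q = 0$ (Lemma~\ref{lem:idempotentiff}) in one direction and $p = p \cdot q^\perp \leq q^\perp$ (Lemma~\ref{lem:preserveunderidempotent}) in the other; and in that case $p \ovee q = p \vee q$, because $p \vee q = p \ovee (p^\perp \cdot q)$ while $p^\perp \cdot q = q$ when $q \leq p^\perp$. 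Hence the effect algebra order coincides with the Boolean order, and $M$ is a Boolean algebra in the sense of Example~\ref{ex:booleanalgebra}.

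Finally, to obtain the $\omega$-completeness, I would invoke Proposition~\ref{prop:completelattice}: since $M$ is $\omega$-complete as an effect monoid, its Boolean algebra of projections $P(M)$ is $\omega$-complete. As $M = P(M)$ with coinciding orders, $M$ is an $\omega$-complete Boolean algebra, as desired.

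I do not expect a serious obstacle here; the only mildly delicate point is matching the partial addition $\ovee$ of the effect monoid with the ``join on disjoint elements'' operation of the Boolean algebra, but this follows directly from the elementary lemmas on idempotents established in Section~\ref{sec:basicresults}, so the argument stays routine throughout.
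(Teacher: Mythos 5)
Your proposal is correct and follows essentially the same route as the paper: the paper's proof likewise just combines Proposition~\ref{prop:booleanalgebra} and Proposition~\ref{prop:completelattice} with the observation that $M=P(M)$. The extra verification you perform --- that $\ovee$ on disjoint idempotents agrees with $\vee$ and that the orders coincide --- is detail the paper leaves implicit, and your checks of it are accurate.
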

\begin{proof}
    By Propositions~\ref{prop:booleanalgebra} and \ref{prop:completelattice} 
    $P(M)$ is an $\omega$-complete Boolean algebra. 
    But by assumption every element of $M$ is an idempotent, and hence $M=P(M)$.
\end{proof}

The counterpart to the Boolean effect monoids, are the \emph{halvable effect monoids}
\begin{definition}
We say that an element~$a$ of an effect algebra~$E$
    is \Define{halvable}
    when~$a=b\ovee b$ for some~$b\in E$. We say an effect algebra
    is halvable when $1$ is halvable.
\end{definition}

A halvable effect monoid actually has much more structure then might be apparent:
\begin{definition}\label{def:convexeffectalgebra}
    Let $E$ be an effect algebra. We say $E$ is \Define{convex} if
    there exists an action $\cdot: [0,1]\times E\rightarrow E$,
    where $[0,1]$ is the standard real unit interval, satisfying
    the following axioms for all $a,b\in E$ and~$\lambda, \mu \in
    [0,1]$:
    \begin{itemize}
        \item $\lambda\cdot (\mu\cdot a) = (\lambda\mu)\cdot a$.
    \item If $\lambda+\mu \leq 1$, then $\lambda \cdot a \perp
    \mu \cdot a$ and $\lambda \cdot a \ovee \mu \cdot a =
    (\lambda+\mu)\cdot a$.
        \item If $a\perp b$, then $\lambda\cdot a \perp \lambda \cdot b$ and $\lambda\cdot(a\ovee b) = \lambda\cdot a \ovee \lambda \cdot b$.
        \item $1\cdot a = a$.
    \end{itemize}
\end{definition}

In a convex effect monoid we will usually write
the convex action without any symbol in order to distinguish it 
from the multiplication coming from the monoid structure. 
So if $\lambda \in [0,1]$ is a real number and $a,b \in M$ is 
a convex effect monoid, then we write $\lambda(a\cdot b)$. 
Note that a priori it is not clear whether 
$\lambda (a\cdot b) = (\lambda a)\cdot b
    =   a \cdot (\lambda b)$.

\begin{proposition}\label{prop:dircompleteisconvex}
    Let $M$ be a halvable $\omega$-complete effect monoid.
        Then $M$ is convex.
\end{proposition}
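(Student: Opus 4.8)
The plan is to reduce the whole statement to the construction of a single effect-monoid morphism $s\colon [0,1]\to M$ from the real unit interval (with its standard effect-monoid structure) satisfying $s(1)=1$. Given such an $s$, I would define the convex action by $\lambda\cdot a \equiv s(\lambda)\cdot a$, using the ambient monoid multiplication. Each convexity axiom then falls out immediately: $1\cdot a = s(1)\cdot a = a$; the identity $\lambda\cdot(\mu\cdot a) = s(\lambda)\cdot(s(\mu)\cdot a) = (s(\lambda)\cdot s(\mu))\cdot a = s(\lambda\mu)\cdot a$ uses associativity of $\cdot$ together with multiplicativity of $s$; and the two additivity axioms follow from bi-additivity of the monoid product (Definition~\ref{def:effectmonoid}) once we know $s(\lambda)\perp s(\mu)$ and $s(\lambda)\ovee s(\mu)=s(\lambda+\mu)$ whenever $\lambda+\mu\leq 1$. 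So everything hinges on building $s$.

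To construct $s$, first fix a halving element $h$ with $h\ovee h = 1$ (which exists since $M$ is halvable). I would then show by induction the key identity $2\,h^{n+1}=h^n$, and hence $2^n h^n = 1$: indeed $2\,h^{n+1} = h^n\cdot h \ovee h^n\cdot h = h^n\cdot(h\ovee h) = h^n$, and the summability is guaranteed because the result is dominated by $1$. This lets me define $s$ on the dyadic rationals in $[0,1]$ by $s(k/2^n)\equiv k\,h^n$ (the $k$-fold sum, which exists since $k\leq 2^n$ and $2^n h^n=1$). Well-definedness, i.e.\ independence of the chosen representation, is exactly the content of $2\,h^{n+1}=h^n$. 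Passing to a common denominator, one checks on dyadics that $s$ is additive ($s(d_1)\ovee s(d_2)=s(d_1+d_2)$ when $d_1+d_2\leq 1$), monotone, and multiplicative ($h^n\cdot h^m = h^{n+m}$ gives $s(d_1)\cdot s(d_2)=s(d_1 d_2)$, then extend by bi-additivity).

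Finally I would extend $s$ to all of $[0,1]$ by $\omega$-completeness: for $\lambda\in[0,1]$ set $s(\lambda)\equiv\bigvee\{\,s(d)\;;\;d\text{ dyadic},\ d\leq\lambda\,\}$, a supremum of a countable increasing family that exists by $\omega$-completeness and is independent of the approximating sequence by monotonicity on dyadics. Preservation of $\ovee$ then follows by approximating $\lambda,\mu$ from below by dyadics and commuting $\ovee$ with the suprema via Corollary~\ref{cor:sumomegaem} (applied twice, once in each argument), using that $\{d+e\}$ is cofinal among dyadics below $\lambda+\mu$ so that $\bigvee s(d+e)=s(\lambda+\mu)$. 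Preservation of the product follows analogously, replacing Corollary~\ref{cor:sumomegaem} by normality of multiplication (Theorem~\ref{thm:multisnormal}): $s(\lambda)\cdot s(\mu)=\bigvee_{d,e}s(d)\cdot s(e)=\bigvee_{d,e}s(de)=s(\lambda\mu)$, using cofinality of $\{de\}$ among dyadics below $\lambda\mu$.

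I expect the main obstacle to be precisely these last limiting arguments: verifying that $\ovee$ and $\cdot$ genuinely commute with the defining suprema of $s$ and that the relevant dyadic sets (sums $\{d+e\}$ and products $\{de\}$) are cofinal below $\lambda+\mu$ and $\lambda\mu$, so that the morphism equations reassemble on the nose. The algebra on dyadics, by contrast, is routine, and the reduction in the first paragraph is purely formal. Note that no uniqueness of the half $h$ is needed, since we only require \emph{some} convex action to exist.
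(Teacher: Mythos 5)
Your strategy is essentially the paper's own: the paper likewise fixes a half $a$ with $a\ovee a=1$, defines the dyadic elements $\overline{m/2^n}=m\,a^n$, extends to arbitrary $\lambda\in[0,1]$ by countable suprema of dyadic approximants (using $\omega$-completeness), defines the action as $(\lambda,x)\mapsto\overline{\lambda}\cdot x$, and verifies the convexity axioms via normality of addition and multiplication (Corollary~\ref{cor:sumomegaem}, Theorem~\ref{thm:multisnormal}). Your dyadic algebra ($2h^{n+1}=h^n$, well-definedness, additivity, multiplicativity) and the formal reduction of convexity to the map $s$ are all correct.

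There is, however, one genuine gap, and it sits exactly where you predicted trouble: the cofinality claims are false in an edge case. Take $\lambda=1/3$ and $\mu=2/3$. Then $\lambda+\mu=1$ is dyadic, so $s(1)=1$ occurs in the supremum defining $s(\lambda+\mu)$; but since neither $1/3$ nor $2/3$ is dyadic, every sum $d+e$ with $d\leq\lambda$, $e\leq\mu$ dyadic is \emph{strictly} less than $1$, so no $s(d+e)$ dominates $s(1)$ and the set $\{d+e\}$ is not cofinal among dyadics $\leq\lambda+\mu$. The same failure occurs for products (e.g.\ $\lambda=2/3$, $\mu=3/4$, $\lambda\mu=1/2$: all products $de$ are strictly below $1/2$). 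Cofinality only holds among dyadics \emph{strictly} below $\lambda+\mu$ (resp.\ $\lambda\mu$), so to close the argument you must additionally prove that for every dyadic $q$ one has $\bigvee\{\,s(f)\;;\;f\text{ dyadic},\ f<q\,\}=s(q)$; taking $f=q-2^{-n}$, this amounts to $\bigwedge_n s(2^{-n})=\bigwedge_n h^n=0$. This is not automatic from monotonicity --- it is precisely the ``no infinitesimals'' statement of Lemma~\ref{lem:asummablepowerszero} (if $a\perp a$ then $\bigwedge_n a^n=0$), which the paper invokes at the corresponding point of its proof (to reconcile the dyadic definition of $\overline{q}$ with its definition as a supremum of strictly smaller dyadics), and which your proposal never uses. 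Once you add this lemma, your argument goes through and coincides with the paper's proof.
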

\begin{proof}
    Pick any~$a \in M$ with~$a \ovee a = 1$.
    Let $q = \frac{m}{2^n}$ be a dyadic rational number with $0\leq m\leq
    2^n$. We define a corresponding element~$\cl{q} \in M$  by
        $\cl{q} = m a^n$, which is easily seen to be independent
        of the choice of~$m$ and~$n$.
    This yield an action~$(q, s) \mapsto \cl{q} \cdot s$ that satisfies
        all axioms of Definition~\ref{def:convexeffectalgebra}
        restricted to dyadic rationals.
    
    Assume~$\lambda \in (0,1]$.
    Pick a strictly increasing
        sequence~${0\leq q_1< q_2<\ldots}$ of dyadic rationals
        with~$\sup q_i = \lambda$.

        We will define~$\cl\lambda \in M$ by~$\bigvee_i \cl {q_i}$,
            but first we have to show that it is independent
                of the choice of the sequence
                    and that it coincides with the definition
                    just given for dyadic rationals.
        So assume~$0 \leq p_1 < p_2 < \ldots$
                is any other sequence of dyadic rationals
                with~$\sup p_i=\lambda$.
    For any~$p_i$ we can find a~$q_j$ with~$p_i \leq q_j$,
        so~$\cl {p_i} \leq \cl {q_j}$, hence~$\bigvee_i \cl{p_i}
                \leq \bigvee_j \cl{q_j}$.
        As the situation is symmetric between the sequences,
            we also have~$\bigvee_j \cl{q_j} \leq \bigvee_i \cl{p_i}$
            and so~$\bigvee_j \cl{q_j} = \bigvee_i \cl{p_i}$.
        Hence~$\cl \lambda$ is independent of the choice of sequence.
    Next, assume~$\lambda \equiv q$ is a non-zero dyadic rational.
    Pick~$m$ with~$2^{-m} \leq q$.
    Then~$q_n \equiv q - {2^{-(m+n)}}$
        is a sequence of dyadic rationals with~$\sup q_n = q$.
    We have~$\bigvee_n \cl {q - 2^{-(m+n)}}
                    = \cl q \ominus \bigwedge_n \cl{2^{-(m+n)}}
                    = \cl q \ominus \bigwedge_n a^{m+n}
                    = \cl q $,
                    (where in the last step we used Lemma~\ref{lem:asummablepowerszero}) so both definitions of~$\cl q$ coincide.
        As a result we are indeed justified
            to define~$\overline\lambda = \bigvee_i \cl {q_i}$.
 We can then define an action by $(\lambda, s)\mapsto \cl{\lambda}\cdot s$.
 As both addition and multiplication preserve suprema 
 by Theorem~\ref{thm:multisnormal}, it is straightforward to show 
 that this action indeed satisfies all the axioms of a convex action.
\end{proof}

\section{Embedding theorems}\label{sec:embeddingtheorem}

We now have what we need to show that any $\omega$-complete effect monoid embeds into a direct sum of a Boolean effect monoid and a halvable one, which lies at the heart of our results.

\begin{lemma}
\label{lem:ceilhalveable}
    Let $M$ be an $\omega$-complete effect monoid, and let $a$ be a halvable element.
    Then the ceiling~$\ceil{a}$
    is halvable as well.
\end{lemma}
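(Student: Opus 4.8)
The plan is to exploit the partial division operation from Lemma~\ref{lem:div} to explicitly construct a half of~$\ceil{a}$. Since~$a$ is halvable, fix~$b \in M$ with~$a = b \ovee b$. The naive attempt of dividing~$b$ by~$\ceil{a}$ fails (one computes~$b/\ceil{a} = b$ because~$b \leq a \leq \ceil{a}$ forces the higher terms of the defining sum to vanish), so the key idea is instead to divide by~$a$ itself and set~$c \equiv b/a$. This is well-defined because~$b \leq a$.

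The main step is then the short computation
\begin{equation*}
    c \ovee c \ =\ (b/a)\ \ovee\ (b/a)\ =\ (b\ovee b)/a\ =\ a/a\ =\ \ceil{a},
\end{equation*}
which exhibits~$\ceil{a}$ as~$c \ovee c$ and hence as halvable. The second equality is additivity of division, Lemma~\ref{lem:div}\eqref{lem:div-additive}, applied to the summable pair~$b, b$ whose sum~$b \ovee b = a$ lies below the divisor~$a$; the final equality is Lemma~\ref{lem:div}\eqref{lem:div-adiva}, which gives~$a/a = \ceil{a}$. I would just need to remark that~$b \ovee b \leq a$ holds (with equality) so that the hypotheses of the additivity clause are met.

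The conceptual obstacle, and the only genuinely non-routine part, is recognising the \emph{correct divisor}: dividing by~$\ceil{a}$ merely recovers~$a$, whereas dividing by~$a$ turns~$b$ into a ``one-half'' element whose double is~$a/a = \ceil{a}$ rather than~$a$. Once this choice is made, the result follows immediately from the two cited properties of division, so no further estimates or supremum arguments are required.
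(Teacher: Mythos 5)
Your proposal is correct and matches the paper's proof in essence: the paper simply unfolds the definition of the ceiling and splits the sum, $\ceil{a} = \bigovee_n (b\ovee b)\cdot(a^\perp)^n = \bigl(\bigovee_n b\cdot(a^\perp)^n\bigr)\ovee\bigl(\bigovee_n b\cdot(a^\perp)^n\bigr)$, and the half it thereby exhibits, $\bigovee_n b\cdot(a^\perp)^n$, is by definition exactly your $c = b/a$. The only difference is bookkeeping: you let Lemma~\ref{lem:div}\eqref{lem:div-additive} and~\eqref{lem:div-adiva} absorb the sum-splitting computation that the paper performs directly, so both proofs construct the same halving element.
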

\begin{proof}
Write~$a\equiv b\ovee b$.
We compute
\begin{multline*}
\textstyle
    \ceil{a} \ \equiv\ 
    \bigovee_n a  \cdot  (a^\perp)^n \ =\  
    \bigovee_n (b\ovee b) \cdot (a^\perp)^n \\
    \ =\  
\textstyle
    \bigl(\bigovee_n b \cdot (a^\perp)^n\bigr) \,\ovee\, \bigl(
    \bigovee_n b \cdot (a^\perp)^n \bigr),
\end{multline*}
    and hence it is indeed halvable.
\end{proof}

\begin{proposition}\label{prop:maxcollectionbooleanhalveable}
Each $\omega$-complete effect monoid~$M$
has a subset~$E \subseteq M$ 
\begin{enumerate}
\item
that is a maximal collection of non-zero orthogonal idempotents,
and
\item
such that each element of~$E$ is either halvable or Boolean.
\end{enumerate}
\end{proposition}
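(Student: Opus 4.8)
The plan is to use Zorn's lemma to produce a family $E$ that is maximal among all collections of non-zero orthogonal idempotents whose members are each halvable or Boolean, and then to upgrade this to maximality among \emph{all} collections of non-zero orthogonal idempotents by showing that every non-zero idempotent dominates a non-zero idempotent that is halvable or Boolean.

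First I would set up the Zorn's lemma argument. Consider the collections $E$ of non-zero idempotents of $M$ that are pairwise orthogonal and whose every member is halvable or Boolean, ordered by inclusion. Recall from Lemma~\ref{lem:idempotentscommute} that idempotents commute with all elements, so for idempotents $p,q$ orthogonality reduces to the single condition $p\cdot q = 0$. The union of a chain of such collections is again such a collection (any two members lie in a common link of the chain, hence are orthogonal), so Zorn's lemma yields a maximal element $E$.

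Next comes the heart of the argument: I claim that every non-zero idempotent $q$ lies above some non-zero idempotent that is halvable or Boolean. If $q$ is itself Boolean we are done. Otherwise there is some $b\leq q$ that is not idempotent, so that $c\equiv b\cdot b^\perp \neq 0$ by Lemma~\ref{lem:idempotentiff}. By Lemma~\ref{lem:selfsummable} the element $c$ is summable with itself, so $c\ovee c$ is a halvable element, and it is non-zero since $c\leq c\ovee c$ forces $c=0$ when $c\ovee c = 0$. As $c\leq b\leq q$, Lemma~\ref{lem:summableunderidempotent} gives $c\ovee c\leq q$. Its ceiling $\ceil{c\ovee c}$ is then a non-zero idempotent, which is halvable by Lemma~\ref{lem:ceilhalveable}, and which satisfies $\ceil{c\ovee c}\leq q$ because $q$ is an idempotent above $c\ovee c$ while $\ceil{\,\cdot\,}$ is the least such idempotent (Proposition~\ref{prop:ceilfloor}).

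Finally I would use this claim to finish. Suppose $E$ were not maximal among all families of non-zero orthogonal idempotents, so there is a non-zero idempotent $q$ with $q\cdot p=0$ for every $p\in E$. Applying the claim yields a non-zero halvable-or-Boolean idempotent $q'\leq q$. Since multiplication by an idempotent is order preserving, $q'\cdot p\leq q\cdot p=0$ for each $p\in E$, so $q'$ is orthogonal to all of $E$, and $q'\notin E$ as it is non-zero. Then $E\cup\{q'\}$ is a strictly larger collection of the considered type, contradicting the maximality obtained from Zorn's lemma; hence $E$ is the desired maximal collection. The main obstacle is the claim, and specifically producing a halvable idempotent out of a non-Boolean one: the key observation is that the ``defect from idempotency'' $b\cdot b^\perp$ is automatically summable with itself, which hands us halvability, after which passing to the ceiling preserves halvability while producing the required idempotent.
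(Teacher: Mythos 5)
Your proof is correct. It rests on the same computational heart as the paper's proof --- namely that below any non-Boolean idempotent $q$ one finds a non-zero \emph{halvable} idempotent, obtained as $\ceil{2(b\cdot b^\perp)}$ where $b\leq q$ is a witness of non-Booleanness, using Lemmas~\ref{lem:idempotentiff}, \ref{lem:selfsummable}, \ref{lem:summableunderidempotent}, \ref{lem:ceilhalveable} and Proposition~\ref{prop:ceilfloor} --- but the surrounding Zorn argument is organised in the opposite direction. The paper applies Zorn twice, in a nested fashion: first it takes a maximal orthogonal family $H$ of non-zero halvable idempotents, then extends it to a maximal orthogonal family $E$ of arbitrary non-zero idempotents; property (1) is then free, and the work goes into property (2), showing every $e\in E\setminus H$ is Boolean (any non-idempotent $a\leq e$ would produce a non-zero halvable idempotent below $e$, hence orthogonal to all of $H$, contradicting maximality of $H$). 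You instead apply Zorn once, to families that already satisfy property (2), and the work goes into property (1): your ``density'' claim that every non-zero idempotent dominates a non-zero halvable-or-Boolean idempotent shows that a counterexample to global maximality could be shrunk to an admissible element extending $E$. Each organisation buys something: the paper gets maximality by construction at the price of the slightly awkward $E\setminus H$ bookkeeping and a nested Zorn, while your version isolates the dichotomy in a clean standalone claim and uses a single Zorn application; your use of Lemma~\ref{lem:summableunderidempotent} to get $c\ovee c\leq q$ is also marginally more direct than the paper's appeal to Lemma~\ref{lem:preserveunderidempotent}. One point worth making explicit in a final write-up: $q'\notin E$ holds because membership would force $q'\cdot q'=0$ and hence $q'=q'^2=0$, contradicting non-zeroness --- your parenthetical ``as it is non-zero'' is the right conclusion but compresses this small argument.
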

\begin{proof}
Let~$H$ be a maximal collection
of non-zero orthogonal halvable idempotents of~$M$,
and let~$E$ be a maximal collection
of non-zero orthogonal idempotents of~$M$
that extends~$H$.
(Such sets~$E$ and~$H$ exist by Zorn's Lemma).
By definition, $E$ is a maximal collection
of non-zero orthogonal idempotents of~$M$,
so the only thing to prove
is that each~$e\in E\backslash H$ is Boolean.
Hence, let~$a$ be an element of~$M$ below
some~$e\in E\backslash H$;
we must show that~$a$ is an idempotent.
    Note that~$2(a^\perp \cdot a)\equiv a^\perp \cdot a \ovee a^\perp\cdot a$ 
    (which exists by e.g.~Lemma~\ref{lem:selfsummable}) is halvable,
    and $2(a^\perp\cdot a) \leq e$,
    because $2(a^\perp \cdot a)\cdot e = 2(a^\perp \cdot a \cdot e)
     = 2(a^\perp\cdot a)$.
Then the idempotent $\ceil{2a\cdot a^\perp} \leq e$,
    which is halvable by Lemma~\ref{lem:ceilhalveable},
is orthogonal to all~$h\in H$
    (since it is below~$e$)
and must therefore be zero
lest it contradict the maximality of~$H$.
In particular, 
    $a^\perp\cdot a=0$ since $a^\perp \cdot a \leq \ceil{2a^\perp\cdot a}=0$,
    and so~$a$ is an idempotent by Lemma~\ref{lem:idempotentiff}.
Whence~$e$ is Boolean.
\end{proof}

Note that the only idempotent that is both Boolean and halvable is zero, and hence
each element in the above set is either Boolean \emph{or} halvable.

    

\begin{proposition}
Given a maximal orthogonal collection of non-zero idempotents~$E$
of an $\omega$-complete effect monoid~$M$,
the map
$$
    a\mapsto (a\cdot e)_e\colon M\longrightarrow \bigoplus_{e\in E} Me
$$
is an embedding of effect monoids.
\end{proposition}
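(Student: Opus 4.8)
The plan is to verify first that the map is a well-defined morphism of effect monoids, and then to establish the order-reflecting property, which is where the maximality of $E$ and the normality of multiplication do the real work.

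\emph{Morphism.} Each $a\cdot e$ lies in the corner $Me$, so the map indeed lands in $\bigoplus_{e\in E}Me$, and it sends $1$ to $(e)_e$, the unit of the direct sum. Bi-additivity of the multiplication (Definition~\ref{def:effectmonoid}) shows that it preserves $\perp$ and $\ovee$ componentwise, the resulting sums staying inside $Me$ by Lemma~\ref{lem:summableunderidempotent}. For multiplicativity I would use that idempotents are central (Lemma~\ref{lem:idempotentscommute}) together with $e^2=e$ to compute $(a\cdot e)\cdot(b\cdot e)=a\cdot(e\cdot b)\cdot e=a\cdot b\cdot e$, so the map respects products. These checks are routine.

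\emph{Order reflection.} Suppose $a\cdot e\leq b\cdot e$ for every $e\in E$; I must deduce $a\leq b$. The infimum $a\wedge b$ exists by Theorem~\ref{thm:latticeemon}, and applying normality (Theorem~\ref{thm:multisnormal}) to the two-element set $\{a,b\}$ with the multiplier $e$ on both sides, together with the identity $e\cdot x\cdot e=x\cdot e$, yields $(a\wedge b)\cdot e=(a\cdot e)\wedge(b\cdot e)=a\cdot e$ for each $e$. Writing $d\equiv a\ominus(a\wedge b)$, multiplying $a=(a\wedge b)\ovee d$ by $e$ then gives $d\cdot e=a\cdot e\ominus(a\wedge b)\cdot e=0$ for all $e$. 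It therefore suffices to prove that $d=0$, for then $a=a\wedge b\leq b$.

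\emph{The crux: $d\cdot e=0$ for all $e\in E$ forces $d=0$.} This is the step where maximality enters. Since $d\cdot e=0$ and idempotents are central, also $e\cdot d=0$, so Proposition~\ref{prop:ceilprod} gives $e\cdot\ceil{d}=0$, whence $\ceil{d}\cdot e=e\cdot\ceil{d}=0$ for every $e\in E$. If $d\neq 0$, then $\ceil{d}$ is a non-zero idempotent orthogonal to every element of $E$ and distinct from each of them (an idempotent orthogonal to itself must be $0$), so $E\cup\{\ceil{d}\}$ would be a strictly larger orthogonal family of non-zero idempotents, contradicting the maximality of $E$. Hence $d=0$, and the map is order reflecting, i.e.\ an embedding. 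I expect this maximality argument, routed through the ceiling by Proposition~\ref{prop:ceilprod}, to be the main obstacle, since the morphism and reduction steps are straightforward.
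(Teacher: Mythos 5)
Your proof is correct, and while it rests on the same three ingredients as the paper's (normality of multiplication, ceilings via Proposition~\ref{prop:ceilprod}, and maximality of $E$), the order-reflection argument is routed differently. The paper first proves the global fact $\bigovee E = 1$: any upper bound $u$ of $E$ satisfies $e \leq \floor{u}$ for all $e$, so $\floor{u}^\perp = \ceil{u^\perp}$ is an idempotent orthogonal to all of $E$, hence $0$ by maximality, forcing $u = 1$; it then applies Theorem~\ref{thm:multisnormal} to this (possibly uncountable) supremum to decompose every element as $a = a \cdot \bigovee_{e} e = \bigovee_{e} a\cdot e$, from which order reflection follows by comparing suprema termwise. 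You never form the global supremum at all: you use the lattice structure (Theorem~\ref{thm:latticeemon}) and normality only for \emph{binary} meets to reduce everything to the claim that $d \cdot e = 0$ for all $e \in E$ forces $d = 0$, and then you run the maximality-plus-ceiling device on $\ceil{d}$ — exactly the move the paper makes, but applied to $d$ rather than to $u^\perp$. What your route buys is economy: you sidestep the mildly delicate point that $\bigovee E$ exists (in the paper this holds only because $1$ turns out to be the \emph{unique} upper bound of the finite partial sums), and you invoke normality in its weakest form. What the paper's route buys is the reusable and conceptually central fact $\bigovee E = 1$ together with the decomposition $a = \bigovee_e a \cdot e$, which makes transparent why $M$ sits inside the direct sum of its corners. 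Both proofs are complete; yours is a legitimate, slightly more elementary alternative.
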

\begin{proof}
    The map obviously maps $1$ to $1$, and preserves addition. 
    Hence it also preserves the complement and the order.
    By Lemma~\ref{lem:idempotentscommute} we have 
    $(a\cdot e)\cdot (b\cdot e) = (a\cdot b)\cdot (e\cdot e) =(a\cdot b)\cdot e$,
    and so the map also preserves the multiplication.
    It remains to show that the map is order reflecting. 
    Note that if we had~$\bigovee E = 1$,
    then for any $a$ by Theorem~\ref{thm:multisnormal} $a = a\cdot 1 = a\cdot \bigovee_{e\in E} e 
    = \bigovee_{e\in E} a\cdot e$, and hence
    if $a\cdot e \leq b\cdot e$ for all $e\in E$ we have
    $a = \bigovee_{e\in E} a\cdot e \leq \bigovee_{e\in E} b\cdot e = b$, which proves that it is indeed order reflecting.

    So let us prove that $\bigovee E = 1$. Suppose~$u$ is an upper bound for~$E$;
    we must show that~$u=1$.
    Note that~$\floor{u}$ is an upper bound for~$E$ too,
    since~$E$ contains only idempotents.
    It follows that the idempotent~$\floor{u}^\perp = \ceil{u^\perp}$ 
    is orthogonal
    to all~$e\in E$,
    which is impossible (by maximality of~$E$) unless
    $\ceil{u^\perp}= 0$.
    Hence~$\ceil{u^\perp}=0$,
    and thus~$u^\perp = 0$. 
\end{proof}

\begin{theorem}\label{thm:omegacompleteembed}
    Let $M$ be an $\omega$-complete effect monoid. Then there exist
    $\omega$-complete effect monoids $M_1$ and $M_2$ where
    $M_1$ is convex, and $M_2$ is an $\omega$-complete Boolean algebra
    such that $M$ embeds into $M_1\oplus M_2$.
\end{theorem}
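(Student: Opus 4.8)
The plan is to combine the two structural results already established in Sections~\ref{sec:boolhalvesconvexity} and~\ref{sec:embeddingtheorem}. By Proposition~\ref{prop:maxcollectionbooleanhalveable}, I can choose a maximal collection $E$ of non-zero orthogonal idempotents of $M$ such that each $e \in E$ is either halvable or Boolean, and as observed immediately after that proposition, these two possibilities are mutually exclusive (the only idempotent that is both is $0$). This partitions $E$ into a set $E_h$ of halvable idempotents and a set $E_b$ of Boolean ones. The preceding Proposition then gives an embedding of effect monoids $M \hookrightarrow \bigoplus_{e \in E} Me$, since $E$ is a maximal orthogonal collection of non-zero idempotents.

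Next I would analyse the individual corners $Me$. Each corner $Me$ is itself an $\omega$-complete effect monoid with unit $e$: it inherits $\omega$-completeness because $Me = [0,e]_M$ by Lemma~\ref{lem:div}\eqref{lem:div-mb}, and by Corollary~\ref{cor:intervalsuprema} suprema of increasing sequences in this interval computed in $M$ stay in the interval and agree with suprema computed in $Me$. If $e \in E_b$ is Boolean, then every element below $e$ is idempotent, so $Me$ is a Boolean effect monoid, and by Proposition~\ref{prop:sharpeffectmonoidisbooleanalgebra} it is an $\omega$-complete Boolean algebra. If $e \in E_h$ is halvable, then $Me$ is a halvable $\omega$-complete effect monoid, so by Proposition~\ref{prop:dircompleteisconvex} it is convex. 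I would then set
\begin{equation*}
    M_2 \ \equiv\  \bigoplus_{e \in E_b} Me
    \qquad\text{and}\qquad
    M_1 \ \equiv\ \bigoplus_{e \in E_h} Me,
\end{equation*}
so that $\bigoplus_{e\in E} Me \cong M_1 \oplus M_2$, giving the desired embedding $M \hookrightarrow M_1 \oplus M_2$ with $M_1$ convex and $M_2$ an $\omega$-complete Boolean algebra.

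The remaining points to nail down are that $M_1$ and $M_2$ genuinely have the claimed properties as whole direct sums, not merely summand-by-summand. For $M_2$ this means checking that an arbitrary direct sum of $\omega$-complete Boolean algebras is again an $\omega$-complete Boolean algebra, which is routine since the lattice and complement operations and countable suprema are all computed pointwise. For $M_1$ I must verify that a direct sum of convex $\omega$-complete effect monoids is convex: the scalar action $\lambda \cdot (a_e)_e \equiv (\lambda \cdot a_e)_e$ is defined componentwise and inherits all the axioms of Definition~\ref{def:convexeffectalgebra} pointwise. I also need $\omega$-completeness of each direct sum, which again holds because increasing sequences have suprema computed coordinatewise.

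The step I expect to require the most care is confirming that the corners $Me$ are $\omega$-complete \emph{in their own right} and that the structural propositions apply to them with $e$ playing the role of the top element. This is where Corollary~\ref{cor:intervalsuprema} and Lemma~\ref{lem:div}\eqref{lem:div-mb} do the essential work: they guarantee that the order-theoretic notion of supremum internal to the corner coincides with the one in $M$, so that ``$\omega$-complete'' is inherited and Propositions~\ref{prop:sharpeffectmonoidisbooleanalgebra} and~\ref{prop:dircompleteisconvex} can be invoked verbatim. Everything else is a matter of assembling these pieces and checking that the componentwise operations on the direct sums behave as expected.
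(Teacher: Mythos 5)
Your proposal is correct and follows essentially the same route as the paper: the same maximal orthogonal collection of halvable/Boolean idempotents from Proposition~\ref{prop:maxcollectionbooleanhalveable}, the same embedding $M \hookrightarrow \bigoplus_{e\in E} Me$ from the proposition preceding the theorem, and the same invocation of Propositions~\ref{prop:dircompleteisconvex} and~\ref{prop:sharpeffectmonoidisbooleanalgebra}. The only (immaterial) difference is that the paper applies those two propositions once to the whole sums $M_1 = \bigoplus_{p\in H} pM$ and $M_2 = \bigoplus_{q\in B} qM$ after noting that $M_1$ is halvable and $M_2$ is Boolean, whereas you apply them corner-by-corner and then check that convexity and being an $\omega$-complete Boolean algebra are stable under pointwise direct sums.
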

\begin{proof}
    Let $E=H\cup B$ be a maximal collection of non-zero orthogonal idempotents of
    Proposition~\ref{prop:maxcollectionbooleanhalveable} such that the idempotents
    $p\in H$ are halvable, while the $q\in B$ are Boolean.

    Let $M_1 \equiv \bigoplus_{p\in H} pM$ and $M_2 \equiv \bigoplus_{q\in B} qM$.
    It is easy to see that $M_1$ is then again halvable and $M_2$ is Boolean.
    By Propositions~\ref{prop:dircompleteisconvex} and~\ref{prop:sharpeffectmonoidisbooleanalgebra} $M_1$ is convex while $M_2$ is an $\omega$-complete Boolean algebra.
    By the previous proposition $M$ embeds into 
    $\bigoplus_{p\in E}pM \cong M_1\oplus M_2$.
\end{proof}

One might be tempted to think that the above result could be strengthened to an isomorphism. The following example shows that this is not the case:

\begin{example}
    Let $X_1$ and $X_2$ be uncountably infinite sets, and let $A$ be the set of all pairs of functions 
    $$A \equiv \{(f_1:X_1\rightarrow [0,1], f_2: X_2\rightarrow \{0,1\})\}.$$
    Let $S_0, S_1\subseteq A$ be subsets where both functions are unequal to 0 respectively 1 only at a countable number of spots:
    \begin{align*}
    S_0 &\equiv \Bigl\{(f_1,f_2)~;~ \text{both\,} 
                \left[ \begin{aligned}
        \{x_1\in X_1~;~f_1(x_1)\neq 0\}\\
        \{x_2\in X_2~;~f_2(x_2)\neq 0\} 
                \end{aligned}\right.\text{\,countable}\Bigr\} \\
        S_1 &\equiv \Bigl\{(f_1,f_2)~;~ \text{both\,}
        \left[ \begin{aligned}
        \{x_1\in X_1~;~f_1(x_1)\neq 1\} \\
        \{x_2\in X_2~;~f_2(x_2)\neq 1\}\end{aligned} \right. \text{\,countable}\Bigr\}
    \end{align*}
    Finally, define $M = S_0\cup S_1$. It is then straightforward to check that $M$ is a $\omega$-complete effect monoid. It is easy to see that $M$ has no maximal halvable idempotent, and hence for any $M_1$ halvable and $M_2$ Boolean, necessarily ${M\neq M_1\oplus M_2}$.
\end{example}

Though the embedding is not an isomorphism for $\omega$-complete effect monoids, when we assume full 
directed-completeness, we can derive a stronger result.

\begin{lemma}\label{lem:maximalsummableelement}
    Let $M$ be a directed-complete effect monoid.
    Then there is a
    maximal element that is summable with itself. That is:
    there is an~$a\in M$ such that $a\perp a$ and for any other $b\geq a$ such that $b\perp b$, we have $b=a$.
    Furthermore,
 \begin{enumerate}
    \item
    $a\ovee a$ is an idempotent and
    \item
        if $s\leq (a\ovee a)^\perp$,
        then $s$ is idempotent for any~$s \in M$.
 \end{enumerate}
\end{lemma}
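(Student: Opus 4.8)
The plan is to produce $a$ by a Zorn's lemma argument on the poset $P \equiv \{x\in M; x\perp x\}$ of self-summable elements (ordered as a sub-poset of $M$), and then to squeeze out both (1) and (2) from the maximality of $a$ by manufacturing slightly larger self-summable elements. First I would verify the hypotheses of Zorn's lemma. The set $P$ is non-empty since $0\in P$, so it remains to check that suprema of chains stay in $P$. Given a chain $C\subseteq P$, its supremum $a^*\equiv\bigvee C$ exists because $M$ is directed-complete and a chain is directed. To see $a^*\in P$ I would show $a^*\leq (a^*)^\perp$. Since $(\ )^\perp$ is an order anti-isomorphism and $M$, being directed-complete, has all filtered infima, we have $(a^*)^\perp=\bigwedge_{b\in C} b^\perp$; hence it suffices to prove $a^*\leq b^\perp$ for each $b\in C$, i.e.\ that every $b^\perp$ is an upper bound of $C$. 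This is exactly where the chain condition enters: for $b'\in C$, comparability with $b$ gives either $b'\leq b\leq b^\perp$ (when $b'\leq b$, using $b\in P$) or $b'\leq (b')^\perp\leq b^\perp$ (when $b\leq b'$). Thus $a^*\in P$ bounds $C$, and Zorn's lemma delivers a maximal $a\in P$.

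The key device for both remaining claims is the following: if $x\leq (a\ovee a)^\perp$ is self-summable and $x\ovee x\leq (a\ovee a)^\perp$, then $a\ovee x$ exists (as $x\leq (a\ovee a)^\perp\leq a^\perp$), lies above $a$, and is again self-summable, because $(a\ovee x)\ovee(a\ovee x)=(a\ovee a)\ovee(x\ovee x)$, which exists precisely because $x\ovee x\leq (a\ovee a)^\perp$. Maximality of $a$ then forces $x=0$. For (1), write $p\equiv a\ovee a$ and $d\equiv p^\perp$, and apply the device to $x\equiv a\cdot d$. By bi-additivity of the multiplication together with $a\perp a$, the element $a\cdot d$ is self-summable with $(a\cdot d)\ovee(a\cdot d)=(a\ovee a)\cdot d=p\cdot d\leq d$, and $a\cdot d\leq 1\cdot d=d$, so the device yields $a\cdot d=0$. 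Consequently $p\cdot p^\perp=(a\ovee a)\cdot d=(a\cdot d)\ovee(a\cdot d)=0$, and Lemma~\ref{lem:idempotentiff} gives that $p=a\ovee a$ is idempotent. I would emphasise that this step nowhere assumes $d$ to be idempotent, which matters because that fact is essentially what (1) establishes.

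With (1) proved, $d=p^\perp$ is an idempotent, and (2) follows cleanly. Given $s\leq d$, set $t\equiv s\cdot s^\perp$, which is self-summable by Lemma~\ref{lem:selfsummable}. Since $t\leq s\leq d$ and $d$ is now known to be idempotent, Lemma~\ref{lem:summableunderidempotent} gives $t\ovee t\leq d=(a\ovee a)^\perp$, so the device applies again and forces $t=s\cdot s^\perp=0$; by Lemma~\ref{lem:idempotentiff} this means $s$ is idempotent. I expect the main obstacle to be the Zorn step, namely showing that the supremum of a chain of self-summable elements is self-summable: the argument relies on rewriting $(a^*)^\perp$ as $\bigwedge_{b\in C}b^\perp$ via the anti-isomorphism and filtered-infima completeness, followed by the chain-comparability case distinction. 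Once that is secured, the two enlargement arguments are routine, the only real creativity being the choice of the auxiliary elements $a\cdot p^\perp$ for (1) and $s\cdot s^\perp$ for (2).
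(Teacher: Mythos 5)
Your proof is correct, and it shares the paper's overall skeleton: Zorn's lemma applied to the poset of self-summable elements, followed by extracting (1) and (2) from maximality by manufacturing slightly larger self-summable elements. However, both of your substantive sub-arguments are executed differently from the paper's, and in each case more economically. For the chain step, the paper proves $s\equiv\bigvee D$ is self-summable by first noting pairwise summability inside the chain and then applying normality of addition (Lemma~\ref{lem:additionisnormal}) twice, obtaining $s\ovee s=\bigvee_{d\in D}\, d\ovee s$; you instead note that each $b^\perp$ with $b\in C$ is an upper bound of the chain by comparability, so $a^*\leq\bigwedge_{b\in C} b^\perp=(a^*)^\perp$ --- pure order theory via the anti-isomorphism, with no appeal to normality of addition. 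For item (1), the paper works with $b\equiv(a\ovee a)^\perp$ and the element $b^2\cdot b^\perp$: maximality gives $b^2\cdot b^\perp=0$, hence $(b\cdot b^\perp)^2=0$, and only then does the no-nilpotents result (Lemma~\ref{lem:nonilpotents}, which rests on $\omega$-completeness) yield $b\cdot b^\perp=0$. Your auxiliary element $a\cdot(a\ovee a)^\perp$ is the sharper choice: its double is exactly $p\cdot p^\perp\leq p^\perp$, so your device applies at once and gives $p\cdot p^\perp=0$, whence idempotence by Lemma~\ref{lem:idempotentiff}, bypassing the nilpotent detour entirely. (The paper is forced into the extra factor of $b$ because, before idempotence of $b$ is known, it cannot bound $2(b\cdot b^\perp)$ by $b$ --- Lemma~\ref{lem:summableunderidempotent} would be circular at that point; your choice sidesteps this since the needed bound $(a\cdot p^\perp)\ovee(a\cdot p^\perp)=p\cdot p^\perp\leq p^\perp$ is automatic from distributivity.) Item (2) is essentially identical in both proofs. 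What the paper's route buys is that every step is routine given machinery it has already built; what yours buys is self-containedness --- only the effect-monoid axioms, cancellation, and Lemmas~\ref{lem:selfsummable}, \ref{lem:summableunderidempotent} and~\ref{lem:idempotentiff} are used --- and a visibly shorter path through item (1).
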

\begin{proof}
    Write $A \equiv \{s\in M\ ;\ s\perp s \}$.
    We will show that~$A$ has a maximal element using Zorn's Lemma.
    To this end, suppose~$D \subseteq A$ is a chain.
    We have to show that it has an upper bound in~$A$.
    If~$D$ is empty, then~$0 \in A$ is clearly an upper bound,
        so we may assume that~$D$ is not empty.
    Define~$s \equiv \bigvee D$.
    It is sufficient to show~$s \perp s$ as then~$s \in A$.

    Assume~$d, d' \in D$.
    We claim~$d \perp d'$.
    Indeed, assuming without loss of generality
    that~$d' \leq d$, we see~$d' \leq d \leq d^\perp$ 
        and so~$d \perp d'$.

    So~$\bigvee_{d' \in D} d \ovee d'$ exists.
As addition preserves suprema, we
    have~$\bigvee_{d' \in D} d \ovee d'
            = d \ovee \bigvee_{d' \in D} {d'} = d \ovee s$.
    Hence~$\bigvee_{d \in D} d \ovee s$ exists
        and~$\bigvee_{d\in D} d \ovee s
        = \bigl(\bigvee_{d \in D} d\bigr) \ovee s
        = s \ovee s$, so indeed~$s \in A$.
    By Zorn's Lemma we know there is a maximal element of~$A$.
    Pick such a maximal element~$a \in A$.

    Next we will show that~$a\ovee a$ is idempotent.
    Define~$b\equiv (a\ovee a)^\perp$.
    Note that $b\cdot b^\perp$ is summable with
    itself. As a result $b = b \cdot 1
    \geq b \cdot(b\cdot b^\perp \ovee  b\cdot b^\perp) =
    b^2 \cdot b^\perp \ovee  b^2 \cdot b^\perp$.
    Now note that $1=a\ovee a \ovee
    (a\ovee a)^\perp = a\ovee a \ovee  b \geq a\ovee a \ovee
    b^2 \cdot b^\perp \ovee  b^2 \cdot b^\perp$,
    and hence $a\ovee b^2 \cdot b^\perp$ is
    summable with itself. Since $a$ is maximal with this property
    we must have $b^2 \cdot b^\perp = 0$. But then $(b\cdot b^\perp)^2 = b^2\cdot b^\perp
    b^\perp = 0$ and so~$b\cdot b^\perp = 0$
    by Lemma~\ref{lem:nonilpotents}.
    Hence~$b^\perp = a\ovee a$ is indeed idempotent.

    Now let $s\leq (a \ovee a)^\perp\equiv  b$.
    We have to show that~$s$ is idempotent. By
    Lemma~\ref{lem:selfsummable} $s\cdot s^\perp$ is summable with itself
    and so~ $
    s\cdot s^\perp\ovee s\cdot s^\perp \leq  b 
        \equiv (a \ovee a)^\perp $ by
    Lemma~\ref{lem:summableunderidempotent}.
    Thus~$a \ovee a  \perp s\cdot s^\perp \ovee s\cdot s^\perp$
        and so~$a \ovee s \cdot s^\perp$ is summable with itself.
        By maximality of~$a$ we must have~$s \cdot s^\perp = 0$,
            which shows that~$s$ is indeed an idempotent.
\end{proof}

\begin{theorem}\label{thm:directedcompleteisconvex}
    Let $M$ be a directed-complete effect monoid.
    Then there exists a convex directed-complete effect monoid $M_1$,
    and a complete Boolean algebra $M_2$ such that
    $M\cong M_1\oplus M_2$.
\end{theorem}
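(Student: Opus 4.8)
The plan is to obtain the decomposition from a single idempotent, rather than from the maximal orthogonal family that was forced upon us in the merely $\omega$-complete case. First I would invoke Lemma~\ref{lem:maximalsummableelement} to get a maximal self-summable element $a\in M$ and set $p\equiv a\ovee a$. That lemma already hands us the two facts that make everything work: $p$ is an idempotent, and every $s\leq p^\perp$ is idempotent. Thus $p^\perp$ is Boolean, while $p$ is manifestly halvable, being literally $a\ovee a$ with $a\leq p$. A single idempotent therefore cleanly separates a convex part from a Boolean part, and by Corollary~\ref{cor:corneriso} we immediately have $M\cong pM\oplus p^\perp M$. I would set $M_1\equiv pM$ and $M_2\equiv p^\perp M$.

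Next I would verify that $M_1$ is a convex directed-complete effect monoid. Since $a\leq p$ we have $a\in pM=[0,p]_M$, and $a\ovee a=p$ is the unit of the corner (the corner sum of $a$ with $a$ coinciding with the one in $M$ by Lemma~\ref{lem:summableunderidempotent}), so $pM$ is halvable. For directed-completeness: given a directed $S\subseteq[0,p]_M$, its supremum in $M$ exists and is $\leq p$ because $p$ is an upper bound, hence lies in and is the supremum in $pM$; so $pM$ is directed-complete, and a fortiori $\omega$-complete. Proposition~\ref{prop:dircompleteisconvex} then yields that $M_1$ is convex.

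Then I would check that $M_2$ is a complete Boolean algebra. Each element of $p^\perp M=[0,p^\perp]_M$ is idempotent by Lemma~\ref{lem:maximalsummableelement}, so $M_2$ is Boolean, and by the same interval argument it is directed-complete. Proposition~\ref{prop:sharpeffectmonoidisbooleanalgebra} identifies it with its Boolean algebra of idempotents, and directed-completeness upgrades $\omega$-completeness to full completeness: in a Boolean algebra the finite joins of finite subsets form a directed set whose join is the join of the whole subset, so directed joins together with the already-available finite lattice structure give all joins, and infima follow by complementation. Assembling these pieces gives $M\cong M_1\oplus M_2$ with $M_1$ convex directed-complete and $M_2$ a complete Boolean algebra, as required.

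The main obstacle is conceptual rather than computational: recognising that, in contrast to the $\omega$-complete setting where Proposition~\ref{prop:maxcollectionbooleanhalveable}'s maximal orthogonal family was unavoidable and the conclusion was only an embedding, directed-completeness lets a \emph{single} maximal self-summable element carve out the entire halvable/Boolean split and so promote the embedding to an isomorphism. The remaining work — that corners of a directed-complete monoid are again directed-complete, and that a directed-complete Boolean effect monoid is a complete Boolean algebra — is routine interval bookkeeping resting on Lemma~\ref{lem:maximalsummableelement}.
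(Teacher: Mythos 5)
Your proof is correct and takes essentially the same route as the paper: both obtain the maximal self-summable element $a$ from Lemma~\ref{lem:maximalsummableelement}, split $M$ via the idempotent $a\ovee a$ into a halvable corner and a Boolean corner using Corollary~\ref{cor:corneriso}, and then apply Proposition~\ref{prop:dircompleteisconvex} to the former and an adaptation of Proposition~\ref{prop:sharpeffectmonoidisbooleanalgebra} to the latter. The only difference is that you spell out details the paper leaves implicit, namely that corners inherit directed-completeness and that a directed-complete Boolean effect monoid is a \emph{complete} Boolean algebra.
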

\begin{proof}
    Let $a$ be a maximal element that is summable to itself from
    Lemma~\ref{lem:maximalsummableelement}. It was shown
    that~$p = (a\ovee a)^\perp$ is an idempotent such
    that any~$s\leq p$ is also an
    idempotent. Hence by an adaptation of Proposition~\ref{prop:sharpeffectmonoidisbooleanalgebra} to directed-complete effect monoids, $pM$ is a complete Boolean algebra.

    But of course $p^\perp M$ is halvable and hence by 
    Proposition~\ref{prop:dircompleteisconvex} $p^\perp M$ is convex.
    Hence letting $M=p^\perp M \oplus pM$ gives the desired result.
\end{proof}




\section{Order unit spaces}\label{sec:OUS}

In this section we will see that a convex $\omega$-complete effect monoid $M$ is isomorphic to the set of continuous functions of some basically disconnected compact Hausdorff space $X$ to the real unit interval, or equivalently by Gel'fand duality, isomorphic to the unit interval of an $\omega$-complete commutative C$^*$-algebra. We do this by using a known representation of convex effect algebras as unit intervals of order unit spaces, and then apply Yosida's representation theorem for lattice-ordered vector spaces.

\begin{definition}
    An \Define{order unit space} (OUS)
        is an ordered vector space
            together with distinguished element~$1 \in V$,
            such that for every~$v \in V$,
            there is a~$n \in \N$
            such that~$ -n\cdot 1 \leq v \leq n \cdot 1$.
An OUS is called \Define{Archimedean} provided
    that~$v \leq \frac{1}{n}\cdot 1$
    for all~$n \in \N$,
    implies~$v \leq 0$.
\end{definition}

\begin{definition}
Given an OUS~$V$, its \Define{unit interval}~$
    [0,1]_V \equiv \{v\in V~;~ 0\leq v\leq 1\}$
    is a convex effect algebra
        with~$a \perp b$ iff~$a+b \leq 1$
        and then~$a \ovee b = a+b$;
        $a^\perp = 1-a$; and the convex structure being given by the obvious
        scalar multiplication.
    We say~$V$ is \Define{$\omega$-complete/directed complete}
        whenever~$[0,1]_V$ is.
\end{definition}

\begin{lemma}\label{lem:completenessousemod}
    Let $V$ be an order unit space. It is directed complete
    if and only if it is \Define{bounded directed complete};
    that is if every
    directed subset of~$V$ with upper bound
    has a supremum.
    Similarly~$V$ is~$\omega$-complete iff it is bounded~$\omega$-complete.
\end{lemma}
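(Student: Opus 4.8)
The plan is to prove the directed and the $\omega$-complete equivalences in parallel, since the argument is verbatim the same once one replaces ``directed set'' by ``countable directed set'' (equivalently, increasing sequence) throughout. Recall that, by definition, ``$V$ is directed complete'' means precisely that the convex effect algebra $[0,1]_V$ is directed complete, and that the effect-algebra order on $[0,1]_V$ coincides with the restriction of the vector order (as in Example~\ref{ex:cstaralgebra}); I will use this identification freely.

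The implication bounded-directed-complete $\Rightarrow$ $[0,1]_V$ directed complete is immediate: any directed $S \subseteq [0,1]_V$ is a directed subset of $V$ that is bounded above (by $1$), hence has a supremum $w = \bigvee S$ in $V$; since every element of $S$ is $\geq 0$ we have $w \geq 0$, and since $1$ is an upper bound of $S$ we have $w \leq 1$, so $w \in [0,1]_V$; and a supremum in $V$ that happens to lie in $[0,1]_V$ is a fortiori the supremum in $[0,1]_V$. So this direction is routine.

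For the converse I would reduce an arbitrary bounded-above directed set into the unit interval by an order-automorphism of $V$. Given a directed $S$ with upper bound $u$, pick $s_0 \in S$ and pass to the cofinal subset $\{s \in S ; s \geq s_0\}$, which is again directed, has the same upper bounds (hence the same supremum), and lies in $[s_0, u]_V$. Translating by $-s_0$ and then scaling by $1/m$, where $m \in \N$ satisfies $u - s_0 \leq m\cdot 1$ (such $m$ exists as $1$ is an order unit), lands the set inside $[0,1]_V$; call the image $T$. Both maps are order-isomorphisms of $V$, so they preserve and reflect suprema and their existence. By hypothesis $w := \bigvee T$ exists in $[0,1]_V$, and transporting $w$ back through these isomorphisms yields a supremum of $S$ in $V$ --- \emph{provided} I can show that $w$, the supremum of $T$ computed in the interval, is in fact the supremum of $T$ in all of $V$.

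This last point is the crux and the only real obstacle: a supremum computed in $[0,1]_V$ is automatically below every upper bound that itself lies in $[0,1]_V$, but I must also dominate upper bounds $c$ with $c \not\leq 1$. The plan here is a scaling trick. Given such an upper bound $c$ of $T$ (necessarily $c \geq 0$, as $T$ is nonempty and consists of positive elements), use the order unit to pick $N \in \N$ with $c \leq N\cdot 1$, so that $c/N \in [0,1]_V$. Now consider the scaled set $T/N \subseteq [0, \tfrac{1}{N}\cdot 1]_V \subseteq [0,1]_V$; its supremum $w'$ in $[0,1]_V$ exists by completeness and satisfies $w' \leq \tfrac{1}{N}\cdot 1$, because $\tfrac{1}{N}\cdot 1$ is already an upper bound of $T/N$ in $[0,1]_V$. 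Consequently $N w' \leq 1$, so $N w'$ is an upper bound of $T$ lying inside $[0,1]_V$, whence $w \leq N w'$; on the other hand $c/N$ is an upper bound of $T/N$ in $[0,1]_V$, so $w' \leq c/N$, i.e.\ $N w' \leq c$. Combining gives $w \leq N w' \leq c$, exactly as needed. The $\omega$-complete case is identical, since $T/N$ is a sequence whenever $T$ is, and the only completeness invoked is that of $[0,1]_V$ applied to the relevant scaled (countable, directed) sets.
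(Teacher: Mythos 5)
Your proof is correct and follows essentially the same route as the paper's: restrict to the cofinal subset $\{s \in S\;;\; s \geq s_0\}$, carry it into $[0,1]_V$ by an affine order-automorphism $v \mapsto \frac{1}{m}(v - s_0)$ of $V$, invoke completeness of $[0,1]_V$, and pull the supremum back. The one difference is that the point you call the crux --- that a supremum computed inside $[0,1]_V$ also dominates upper bounds of the set lying outside the interval, which genuinely needs an argument since an order unit space need not be a lattice --- is dispatched in the paper by the word ``clearly'', whereas you justify it with a correct scaling argument (applying the completeness hypothesis a second time, to $T/N$), so your version is the same proof carried out with strictly more care.
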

\begin{proof}
Since any subset of the unit interval is obviously bounded, any
    bounded $\omega$-complete/directed-complete order unit space
    is $\omega$-complete/directed complete.
For the other direction, assume~$S \subseteq V$
    is some (countable) directed subset with upper bound~$b \in V$.
    Pick any~$a \in S$ and define~$S' \equiv \{v;\ v \in S; a \leq v\}$.
    It is sufficient to show that~$S'$ has a supremum.
    Pick any~$n \in \N$, $n\neq 0$ with~$-n \cdot 1 \leq a,b  \leq n\cdot 1$.
Then clearly~$\{\frac{1}{n} (s-a); s\in S'\} \subseteq [0,1]_V$
    has a supremum and so does~$S'$ as~$v \mapsto nv + a$
    is an order isomorphism.
\end{proof}

\begin{lemma}\label{boundeddircomplisarchous}
    A bounded $\omega$-complete OUS is Archimedean.
\end{lemma}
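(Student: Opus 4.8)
The plan is to reduce the Archimedean condition to a single statement about the order unit, namely that $\bigwedge_{n\ge 1}\tfrac1n 1 = 0$, and then to establish this infimum using completeness together with a scaling argument. First I would observe that once this infimum is known to be $0$, the lemma follows at once: given any $v\in V$ with $v\le \tfrac1n 1$ for all $n$, the element $v$ is a lower bound of the set $\{\tfrac1n 1\ ;\ n\ge 1\}$, and hence $v\le \bigwedge_{n\ge1}\tfrac1n 1 = 0$, which is exactly what the definition of Archimedean demands. Note that this argument applies to an arbitrary $v$, so no positivity of $v$ and no lattice structure on $V$ is needed.

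To justify that the relevant infimum even exists, I would pass to the increasing sequence $(-\tfrac1n 1)_{n\ge1}$, which is bounded above by $0$. By Lemma~\ref{lem:completenessousemod} a bounded $\omega$-complete OUS is $\omega$-complete, so $\bigvee_n(-\tfrac1n 1)$ exists; since $w\mapsto -w$ is an order anti-isomorphism of the vector space $V$, the infimum $\ell \equiv \bigwedge_n \tfrac1n 1$ exists as well and equals $-\bigvee_n(-\tfrac1n 1)$. As $0$ is a lower bound of $\{\tfrac1n 1\ ;\ n\ge 1\}$, the greatest lower bound satisfies $\ell \ge 0$.

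The crux is then to prove $\ell = 0$, and here I would exploit the self-similarity of the sequence under halving, using that multiplication by $\tfrac12$ is an order isomorphism of $V$ and therefore preserves infima. On the one hand, the family $(\tfrac{1}{2n}1)_{n\ge1}$ is cofinal in the decreasing sequence $(\tfrac1m1)_{m\ge1}$ (for each $m$ choose $n$ with $2n\ge m$), so the two families have exactly the same lower bounds and thus $\bigwedge_n \tfrac{1}{2n}1 = \ell$. On the other hand $\bigwedge_n \tfrac{1}{2n}1 = \tfrac12\bigwedge_n \tfrac1n 1 = \tfrac12\ell$. Combining the two gives $\ell = \tfrac12\ell$, whence $\ell = 0$, completing the reduction.

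I expect the only genuinely delicate points to be the two order-theoretic bookkeeping steps rather than any hard inequality: first, that negation and scalar multiplication by a positive scalar, being order (anti-)isomorphisms of the ordered vector space, interchange or preserve the suprema and infima in question; and second, that a cofinal subfamily of a monotone family shares its infimum. Everything else is routine, and in particular no appeal to a lattice structure on $V$ is required, so the statement holds for general order unit spaces.
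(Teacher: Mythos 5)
Your proof is correct and follows the same route as the paper's: use bounded $\omega$-completeness (via the anti-isomorphism $w\mapsto -w$) to obtain the existence of $\ell\equiv\bigwedge_n \frac1n 1$, and then observe that any $v$ with $v\leq\frac1n 1$ for all $n$ satisfies $v\leq\ell$. The one genuine difference is that you actually prove $\ell=0$, by the cofinality-and-halving argument $\ell=\bigwedge_n\frac1{2n}1=\frac12\bigwedge_n\frac1n 1=\frac12\ell$, whereas the paper simply writes $\bigwedge_n \frac1n 1=(\inf_n\frac1n)1=0$, as though scalar multiplication against the order unit obviously commuted with this infimum. That commutation is not automatic in an ordered vector space: given that the infimum exists it is provable, but only by an argument like yours, and (given existence of $\ell$) the identity $\ell=0$ is equivalent to the Archimedean condition being established. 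So your write-up supplies the justification that the paper leaves implicit, which is arguably the real content of the lemma. Two cosmetic remarks: the appeal to Lemma~\ref{lem:completenessousemod} is superfluous, since the hypothesis of bounded $\omega$-completeness applies directly to the increasing sequence $(-\frac1n 1)_n$, which is bounded above by $0$; and the observation $\ell\geq 0$ is likewise not needed, as $\ell=\frac12\ell$ already forces $\ell=0$ in a vector space.
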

\begin{proof}
Assume~$V$ is a bounded~$\omega$-complete OUS.
    Let~$v \in V$ be given with~$v \leq \frac1n 1$ for all~$n \in \N$.
As~$a \mapsto -a$ is an order anti-isomorphism,
    all bounded directed subsets of~$V$ have an infimum too,
    so~$v \leq \bigwedge_n \frac1n 1 = (\inf_n \frac{1}{n})1 = 0$
    as desired.
\end{proof}

Recall that the unit interval of an OUS is a convex effect algebra.
In fact, every convex effect algebra is of this form.
\begin{theorem}\label{prop:convextotalisation}
    Let $M$ be a convex effect algebra. Then there exists an order
    unit space $V$ such that $M\cong [0,1]_V$~\cite{gudder1998representation}.
\end{theorem}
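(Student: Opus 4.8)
The statement asks to represent an arbitrary convex effect algebra $M$ as the unit interval of an order unit space. The plan is to build the order unit space directly from $M$ by a Grothendieck-style construction, formally adjoining negatives and higher scalar multiples, and then verify that $M$ sits inside it as exactly the elements between $0$ and $1$. This is a representation result cited to \cite{gudder1998representation}, so I would follow the standard totalisation recipe rather than reinvent it.

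First I would construct the underlying abelian group. The natural candidate is to consider formal differences $a - b$ with $a,b \in M$ and quotient by the equivalence relation generated by the partial addition: declare $a - b \sim c - d$ whenever $a \ovee d = c \ovee b$ in the appropriate sense (taking care that these sums exist, or passing through a cofinal rescaling so that they do). The convex action $\lambda \cdot a$ for $\lambda \in [0,1]$ is then extended to a full real scalar multiplication: for $\lambda > 1$ one writes $\lambda a$ as a formal sum $\lfloor \lambda \rfloor a \ovee (\lambda - \lfloor\lambda\rfloor)a$ living in the group, and for negative scalars one uses the additive inverses just introduced. The order unit is $1 \in M$, and the positive cone consists of those group elements expressible as (scalar multiples of) genuine elements of $M$. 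The order-unit property $-n\cdot 1 \le v \le n\cdot 1$ follows because every formal difference is built from finitely many elements of $M$, each bounded by $1$.

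The key steps, in order, are: (i) check that the equivalence relation is well-behaved so that addition of formal differences is well defined and gives an abelian group; (ii) verify that the extended scalar multiplication is compatible with the convex-action axioms of Definition~\ref{def:convexeffectalgebra}, in particular $\lambda(\mu a) = (\lambda\mu)a$ and distributivity over the formal sums, turning the group into a real vector space; (iii) define the cone and confirm it makes $V$ an ordered vector space with $1$ as an order unit; and (iv) show the canonical map $M \to [0,1]_V$, $a \mapsto a$, is an isomorphism of convex effect algebras — it is injective because the convex structure separates elements, surjective onto $[0,1]_V$ by construction, and it preserves and reflects $\ovee$, $(\ )^\perp = 1 - (\ )$, and the scalar action.

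The main obstacle I anticipate is step (i) together with the injectivity half of (iv): one must rule out collapse in the quotient, i.e.\ show that distinct elements of $M$ are not identified and that $0 \ne 1$ survives. This is precisely where the cancellation structure of the effect algebra (Lemma on the uniqueness of $\ominus$) and the convex axioms must be combined carefully, since the partial nature of $\ovee$ means some formal sums are only meaningful after rescaling by a small $\lambda$. Because this is a known representation theorem, in the paper I would simply invoke \cite{gudder1998representation} for the technical verification and record only that the construction is the expected totalisation.
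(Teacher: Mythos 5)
Your proposal is correct and takes essentially the same approach as the paper: the paper gives no proof of this theorem at all, importing it wholesale from Gudder and Pulmanov\'a via the citation~\cite{gudder1998representation}, which is precisely what you conclude by deferring the technical verification to that reference. Your sketch of the construction (formal differences with half-scaling to make sums exist, extension of the scalar action, the cone of multiples of elements of $M$, and injectivity via cancellation plus the convex axioms) matches the standard totalisation argument in that reference, so there is nothing to compare beyond noting that the paper states the result as a black box.
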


\begin{proposition}\label{prop:multiplicationisbilinear}
    Let $M$ be an $\omega$-complete convex effect monoid.
    Then the multiplication is ``bilinear'':
    $\lambda(a\cdot b) = (\lambda a)\cdot b = a\cdot (\lambda
    b)$ for any~$a,b\in M$ and~$\lambda \in [0,1]$.
\end{proposition}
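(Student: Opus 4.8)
The plan is to show that the abstract convex action is implemented by \emph{internal} multiplication with a central scalar element. Writing $s_\lambda \equiv \lambda 1 \in M$, I aim to prove the single identity $\lambda a = s_\lambda \cdot a = a \cdot s_\lambda$ for all $a \in M$ and $\lambda \in [0,1]$, together with the fact that each $s_\lambda$ is central. Once this is in hand the proposition is immediate: by associativity $(\lambda a)\cdot b = (s_\lambda \cdot a)\cdot b = s_\lambda \cdot (a\cdot b) = \lambda(a\cdot b)$, while centrality of $s_\lambda$ gives $a\cdot(\lambda b) = a\cdot(s_\lambda\cdot b) = (a\cdot s_\lambda)\cdot b = (s_\lambda\cdot a)\cdot b = \lambda(a\cdot b)$.

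For the base case $\lambda = \tfrac12$, set $h \equiv \tfrac12 1$, so that $h\ovee h = 1$ by the convex axioms. Bi-additivity then shows that both $h\cdot a$ and $a\cdot h$ are \emph{halves} of $a$, since $(h\cdot a)\ovee(h\cdot a) = (h\ovee h)\cdot a = a$ and likewise for $a\cdot h$; and $\tfrac12 a$ is a half of $a$ as well. The key lemma I need is that halves are \emph{unique}: if $x\ovee x = y\ovee y$ then $x = y$. I would obtain this for free from Theorem~\ref{prop:convextotalisation}: since $M$ is convex it is isomorphic, as an effect algebra, to the unit interval $[0,1]_V$ of an order unit space, and in the vector space $V$ the equation $x+x=y+y$ forces $x=y$. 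Hence $\tfrac12 a = h\cdot a = a\cdot h$; in particular $h$ is central.

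To climb from $\tfrac12$ to all dyadics $q = m/2^n$, I would iterate the half step (using associativity of $\cdot$) to get $\tfrac{1}{2^n}a = h^n\cdot a$, and then bi-additivity gives $q a = (m\,h^n)\cdot a = s_q\cdot a = a\cdot s_q$, with $s_q$ central as a sum of powers of the central element $h$. Finally, for arbitrary $\lambda$ I pick dyadics $q_i\uparrow\lambda$ and argue $\lambda a = \bigvee_i q_i a$: the increasing sequence $q_i a$ has a supremum $L\le\lambda a$, and $\lambda a\ominus L \le \lambda a\ominus q_i a = (\lambda-q_i)a \le 2^{-n}a$ for suitable $i$, while $\bigwedge_n 2^{-n}a = 0$ follows from the Archimedean property of Lemma~\ref{lem:archemedeanomegadirectedcomplete} (every finite multiple of $\bigwedge_n 2^{-n}a$ is defined). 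Normality of multiplication (Theorem~\ref{thm:multisnormal}) then yields $s_\lambda\cdot a = \bigvee_i s_{q_i}\cdot a = \bigvee_i q_i a = \lambda a$, and symmetrically $a\cdot s_\lambda = \lambda a$, so $s_\lambda$ is central and the reduction is complete.

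The main obstacle is precisely this last passage from dyadic to real scalars: one must know both that the convex action is continuous from below ($\lambda a = \bigvee_i q_i a$), for which the Archimedean vanishing $\bigwedge_n 2^{-n}a = 0$ is essential, and that internal multiplication commutes with these suprema, which is exactly the normality established in Theorem~\ref{thm:multisnormal}. The uniqueness of halving supplied by the order-unit-space totalisation of Theorem~\ref{prop:convextotalisation} is the enabling technical ingredient that makes the base case go through.
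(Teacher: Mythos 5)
Your proof is correct, but it takes a genuinely different route from the paper's. The paper first handles rational scalars by a direct additivity argument ($n(a\cdot(\tfrac1n b)) = a\cdot b$, so $a\cdot(\tfrac1n b)=\tfrac1n(a\cdot b)$), and then finishes the real case \emph{analytically}: passing to the order unit space $V$ with $M\cong[0,1]_V$ from Theorem~\ref{prop:convextotalisation}, it uses the norm estimate $\|a\cdot b\|\leq\|a\|\,\|b\|$ and a sequence of rationals $q_i\uparrow\lambda$ to show $\|\lambda(a\cdot b)-a\cdot(\lambda b)\|$ vanishes, with Archimedeanness (Lemma~\ref{boundeddircomplisarchous}) converting this to equality. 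You instead internalise the scalars: you show the abstract convex action coincides with multiplication by the central elements $s_\lambda=\lambda 1$, proving the dyadic case via uniqueness of halves and the real case via the order-theoretic identity $\lambda a=\bigvee_i q_i a$ (resting on Lemma~\ref{lem:archemedeanomegadirectedcomplete}) combined with normality of multiplication (Theorem~\ref{thm:multisnormal}); all your individual steps check out. Your argument buys a proof that stays essentially inside the effect monoid and avoids the norm entirely, at the price of re-deriving the order-continuity of the scalar action that the paper gets from the norm estimate. Two further remarks. First, you do not actually need Theorem~\ref{prop:convextotalisation} for uniqueness of halves: in any convex effect algebra, $x\ovee x=y\ovee y$ already gives $x=\tfrac12(x\ovee x)=\tfrac12(y\ovee y)=y$ directly from the convexity axioms, so your proof can be made independent of the order-unit-space representation altogether, whereas the paper's proof cannot. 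Second, your identity $s_\lambda=\bigvee_i s_{q_i}$ mirrors exactly how the paper \emph{constructs} a convex action in Proposition~\ref{prop:dircompleteisconvex}; your argument is thus effectively a uniqueness counterpart to that construction, showing that any convex action on an $\omega$-complete effect monoid is the canonical internal one.
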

\begin{proof}
    Clearly~$n(a\cdot (\frac1n b)) = a\cdot b = n \frac1n (a\cdot b)$
        and so~$a\cdot (\frac1n b) = \frac1n (a \cdot b)$.
    Hence~$m (a \cdot (\frac 1n b )) = a\cdot (\frac mn b)
            = \frac mn (a \cdot b)$ for any~$m \leq n$.
    We have shown the second equality for rational~$\lambda$.
    With a similar argument one shows the first equality holds
        as well for rational~$\lambda$.
    To prove the general case, 
        let~$a,b \in M$ and~$\lambda \in [0,1]$ be given.
    Pick a sequence~$0 \leq q_1 < q_2 < \ldots $ of rationals
        with~$\sup_n q_n = \lambda$.
    Let~$V$ be an OUS with~$M \cong [0,1]_V$ (as a convex effect algebra),
        which exists due to Theorem~\ref{prop:convextotalisation}.
    Note that the multiplication of~$M$ is only defined on~$[0,1]_V$.
    For~$a \in[0,1]_V$ we have~$a \leq \| a \| 1$
        and so~$\| a \cdot b \| \leq \| a \| \| b \|$ for any~$b \in [0,1]_V$.
        Hence
        \begin{align*}
            &\norm{\lambda (a\cdot b) - a\cdot (\lambda b)} \\
            &\quad \ =\  \norm{(\lambda - q_i)(a\cdot b) + a\cdot (q_i b) - a\cdot (\lambda b)} \\
        &\quad\ =\  \norm{(\lambda-q_i)(a\cdot b) - a\cdot ((\lambda - q_i)b)} \\
        &\quad\ \leq\  (\lambda-q_i)\norm{a\cdot b} + (\lambda-q_i)\norm{a}\norm{b}.
        \end{align*}
        The right-hand side vanishes
            as $i\rightarrow \infty$.
            Thus $\norm{\lambda(a\cdot b) - {a\cdot (\lambda b)}=0}$.
            Since $V$ is Archimedean by Lemma \ref{boundeddircomplisarchous}, $\norm{\cdot}$
            is a proper norm so that then $\lambda(a\cdot b) - a\cdot (\lambda b) = 0$. That $\lambda(a\cdot b) = (\lambda a)\cdot b$ follows similarly.
\end{proof}

\begin{theorem}[{Yosida \cite{yosida}, cf.~\cite{westerbaan2016yosida}}]\label{prop:yosida}
Let~$V$ be a norm-complete lattice-ordered Archimedean OUS. Denote by $\Phi$ the \Define{Yosida spectrum} of~$V$:
        the compact Hausdorff space
        of real-valued finite-suprema-preserving unital linear functionals on~$V$
        with the induced pointwise topology of~$\R^V$.
    Then the map $\vartheta \colon V \rightarrow C(\Phi)$
    given by~$\vartheta(v)(\varphi) = \varphi(v)$
    is an order isomorphism.
\end{theorem}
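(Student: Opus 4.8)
The plan is to follow Yosida's classical argument: show that $\vartheta$ is an isometric unital lattice homomorphism into $C(\Phi)$ and then invoke the lattice form of the Stone--Weierstrass theorem to obtain surjectivity. First I would verify that $\Phi$ is compact Hausdorff. Each $\varphi \in \Phi$ is unital and positive, hence bounded with $\norm{\varphi} = \varphi(1) = 1$, so $\Phi$ lies in the unit ball of $V^*$, which is weak-$*$ compact by Banach--Alaoglu. Linearity, unitality, positivity, and preservation of finite suprema all survive pointwise limits --- the last because $\max$ is continuous, so $\varphi(a \vee b) = \max(\varphi(a),\varphi(b))$ is a closed condition --- whence $\Phi$ is weak-$*$ closed and therefore compact. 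By definition of the topology each $\vartheta(v)$ is continuous on $\Phi$, and $\vartheta$ is visibly linear, unital ($\vartheta(1) = 1$) and positive; moreover it is a lattice homomorphism, $\vartheta(a \vee b) = \vartheta(a) \vee \vartheta(b)$, precisely because every $\varphi \in \Phi$ preserves finite suprema.

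Next I would show $\vartheta$ is \emph{isometric} for the order-unit norm; this immediately yields injectivity, order-reflection, and a norm-closed image. Since states have norm one, $\norm{\vartheta(v)}_\infty = \sup_{\varphi \in \Phi} |\varphi(v)| \leq \norm{v}$ for free. For the reverse inequality I would work in the full state space $S(V)$ of all unital positive functionals, which is weak-$*$ compact and convex. A Hahn--Banach/Archimedean argument gives $\norm{v} = \sup_{\varphi \in S(V)} |\varphi(v)|$, and as $\varphi \mapsto \varphi(v)$ is affine and weak-$*$ continuous, the Bauer maximum principle (via Krein--Milman) shows this supremum is attained at an \emph{extreme} point of $S(V)$. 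The key structural fact is that every extreme point of $S(V)$ preserves finite suprema, i.e.\ lies in $\Phi$; granting this, the suprema over $S(V)$ and over $\Phi$ coincide and $\vartheta$ is isometric.

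For surjectivity I would examine the image $\vartheta(V) \subseteq C(\Phi)$. It contains the constants (as $\vartheta(1) = 1$), is a linear sublattice (as $\vartheta$ is a lattice homomorphism), separates the points of $\Phi$ (distinct functionals differ on some $v$, so $\vartheta(v)$ separates them), and is norm-closed since $V$ is norm-complete and $\vartheta$ is isometric. By the lattice version of the Stone--Weierstrass theorem (Kakutani--Krein), a closed sublattice of $C(\Phi)$ containing the constants and separating points must be all of $C(\Phi)$. Hence $\vartheta$ is surjective, and being an isometric order isomorphism onto its image, it is the required order isomorphism $V \cong C(\Phi)$.

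The hard part is the structural fact invoked in the isometry step: that every extreme point of $S(V)$ preserves finite suprema, so that the extreme states are exactly the elements of $\Phi$ (equivalently, that $\Phi$ is norming). This is the representation-theoretic heart of the result and the only place the lattice hypothesis is genuinely used: one leverages that the order-unit norm of a lattice-ordered OUS is an $M$-norm, $\norm{a \vee b} = \max(\norm{a},\norm{b})$ for $a,b \geq 0$, so that disjoint positive elements (decomposing $v = v^+ - v^-$ into its positive and negative parts) behave like functions with disjoint support, and one shows that an extreme state cannot be a nontrivial convex combination unless it is multiplicative on such disjoint pairs, i.e.\ a lattice homomorphism. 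Everything else --- Banach--Alaoglu, Krein--Milman/Bauer, and the lattice Stone--Weierstrass theorem --- is standard, so it is this identification of the extreme states that carries the real content.
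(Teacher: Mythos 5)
The paper does not actually prove this theorem: it is imported wholesale by citation (Yosida's representation theorem, cf.~the cited duality note), so there is no internal proof to compare yours against. Your reconstruction is the standard classical argument --- Banach--Alaoglu for compactness of $\Phi$, Kadison's norming theorem plus Krein--Milman/Bauer together with the identification of extreme states as lattice homomorphisms for the isometry, and the Kakutani--Krein lattice Stone--Weierstrass theorem for surjectivity --- and it is sound as a proof outline; order reflection indeed follows, as you implicitly use, from $\vartheta$ being an injective lattice homomorphism, since $\vartheta(v)\geq 0$ forces $\vartheta(v^-)=\vartheta(v)^-=0$ and hence $v^-=0$. The only point where you lean on an unproved claim is the one you flag yourself: that extreme states of a lattice-ordered Archimedean OUS preserve finite suprema, which is precisely the representation-theoretic core of the cited classical result, so your proposal matches the intended proof rather than circumventing a gap.
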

\begin{remark}
    In fact the category of compact Hausdorff spaces with continuous maps between
        them is dually equivalent to the category of lattice-ordered norm-complete Archimedean
            order unit spaces with linear unital finite-supremum-preserving
            maps between them.~\cite{westerbaan2016yosida}
\end{remark}


\begin{theorem}\label{thm:convexextremallydisconnected}
    Let $M$ be a convex $\omega$-complete effect monoid.
    Then $M \cong [0,1]_{C(X)}$ for some basically
        disconnected Hausdorff space~$X$.
    If~$M$ is even directed complete, then~$X$ is extremally disconnected.
\end{theorem}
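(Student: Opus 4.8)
The plan is to realise $M$ concretely as a function space via an order unit space and then transport the multiplication. First I would apply the representation of convex effect algebras (Theorem~\ref{prop:convextotalisation}) to obtain an order unit space $V$ with $M \cong [0,1]_V$ as convex effect algebras. The whole strategy then hinges on checking that $V$ meets the three hypotheses of Yosida's theorem (Theorem~\ref{prop:yosida})---that $V$ is Archimedean, norm-complete, and lattice-ordered---after which $V \cong C(X)$ follows directly.

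To verify these three properties I would argue as follows. Since $M$ is $\omega$-complete, so is $[0,1]_V$, hence $V$ is bounded $\omega$-complete by Lemma~\ref{lem:completenessousemod}, and therefore Archimedean by Lemma~\ref{boundeddircomplisarchous}. For norm-completeness I would use the standard sandwiching argument: given a norm-Cauchy sequence, pass to a subsequence $(v_k)$ with $\norm{v_{k+1}-v_k}\leq 2^{-k}$, so that $v_k - \sum_{j\geq k}2^{-j}1$ is increasing and $v_k + \sum_{j\geq k}2^{-j}1$ is decreasing; bounded $\omega$-completeness provides their common order-limit, which is the norm-limit. The lattice property is the delicate one: $M=[0,1]_V$ is a lattice by Theorem~\ref{thm:latticeemon}, and I would promote this to a vector-lattice structure on $V$ using the order automorphisms given by positive scaling and translation by multiples of $1$. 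The genuinely hard part---and the step I expect to be the main obstacle---is the \emph{compatibility}: one must show that a meet computed inside an order interval is actually the infimum in all of $V$, the subtlety being lower bounds of a pair that are not themselves bounded below by $0$. This is handled by a bootstrapping argument: if $\ell$ is such a lower bound of $a,b\geq 0$, then $a$ and $b$ dominate both $\ell$ and $0$, so $\ell\vee 0$ (obtained from a join in a sufficiently large interval) is again a lower bound lying in the interval, whence $\ell\leq \ell\vee 0\leq a\wedge b$; making this rigorous requires setting up the interval joins/meets and their translation-invariance carefully.

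With Yosida in hand, $V\cong C(X)$ for the compact Hausdorff Yosida spectrum $X\equiv\Phi$, so $M\cong[0,1]_V\cong[0,1]_{C(X)}$ as convex effect algebras. It remains to identify the transported multiplication $*$ on $[0,1]_{C(X)}$ with the pointwise one. Here I would use that $*$ is bi-additive, bilinear (Proposition~\ref{prop:multiplicationisbilinear}), associative, unital, and normal (Theorem~\ref{thm:multisnormal}). On idempotents---the indicators of clopen sets---Proposition~\ref{prop:booleanalgebra} gives $p*q=p\wedge q$, which is exactly the pointwise product. Fixing an idempotent $p$, both $b\mapsto p*b$ and $b\mapsto p\cdot_{\mathrm{pt}} b$ are normal and additive and agree on idempotents; since in the $\omega$-complete $C(X)$ every element is a supremum of an increasing sequence of simple functions (finite $\ovee$-sums $\bigovee_i \lambda_i p_i$ of scaled orthogonal idempotents, available by the staircase/spectral decomposition), normality forces them to agree for all $b$. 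Running the same approximation in the first variable then shows $a*b=a\cdot_{\mathrm{pt}}b$ for all $a,b$, so the isomorphism is one of effect monoids.

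Finally, for the disconnectedness of $X$ I would invoke the classical correspondence between order-completeness of $C(X)$ and disconnectedness of $X$: since $M\cong[0,1]_{C(X)}$ is $\omega$-complete, the lattice $C(X)$ is Dedekind $\sigma$-complete, which forces $X$ to be basically disconnected (cf.~\cite[3N.5]{gillman2013rings}); and if $M$ is even directed complete, then $C(X)$ is Dedekind complete, forcing $X$ to be extremally disconnected. Assembling these facts yields the stated isomorphism $M\cong[0,1]_{C(X)}$ together with the required disconnectedness of $X$.
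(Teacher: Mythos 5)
Your proposal follows the paper's own skeleton almost step for step: represent $M$ as $[0,1]_V$ for an order unit space $V$ via Theorem~\ref{prop:convextotalisation}, check that $V$ is a lattice, bounded $\omega$-complete (Lemma~\ref{lem:completenessousemod}), hence Archimedean (Lemma~\ref{boundeddircomplisarchous}) and norm-complete (the paper simply cites \cite[Lemma 1.2]{wright1972measures} where you give the sandwiching argument), apply Yosida's theorem to get $V\cong C(X)$, and read off basic/extremal disconnectedness of $X$ from bounded $\omega$-/directed completeness of $C(X)$. The one place you genuinely diverge is the identification of the transported product $*$ with the pointwise product. The paper does this pointwise on the Yosida spectrum: for each functional $\varphi$ it shows $\varphi(f*g)=\varphi(f)\varphi(g)$ using the lattice ``distance'' $d(a,b)=a\vee b-a\wedge b$, the estimate $d(f*g,f*h)\leq f*d(g,h)$, bilinearity of $*$ (Proposition~\ref{prop:multiplicationisbilinear}), and the fact that $\varphi$ preserves finite suprema (an argument adapted from \cite[Lemma~5.26]{riesz}). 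You instead argue algebraically: $*$ agrees with the pointwise product on idempotents via Proposition~\ref{prop:booleanalgebra}, and then propagates to all of $[0,1]_{C(X)}$ by additivity, bilinearity, normality (Theorem~\ref{thm:multisnormal}) and spectral (staircase) approximation. Your route is more self-contained relative to the effect-monoid machinery already developed; the paper's is shorter because evaluation at points sidesteps any approximation.

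Your alternative step is workable, but it silently uses three facts that each need an argument. First, that the $*$-idempotents of $[0,1]_{C(X)}$ are exactly the indicators of clopen sets: this is not automatic, since a priori you only know what the $*$-idempotents are abstractly. It can be fixed by showing $*$-idempotents coincide with the sharp elements: if $p*p=p$ then $p\wedge p^\perp = p*(p\wedge p^\perp)\leq p*p^\perp=0$ by Lemmas~\ref{lem:preserveunderidempotent} and~\ref{lem:idempotentiff}, and conversely $p*p^\perp\leq p\wedge p^\perp$; sharp elements of $[0,1]_{C(X)}$ are indicators because binary meets in $C(X)$ are pointwise. Second, that pointwise multiplication by a fixed element of $C(X)$ preserves countable suprema: this is not trivial, because suprema of increasing sequences in $C(X)$ are \emph{not} pointwise suprema; one needs the dense-agreement description of order suprema in a basically disconnected space. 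Third, the staircase/Freudenthal approximation of an arbitrary element of $[0,1]_{C(X)}$ by increasing simple functions over clopen sets. All three are standard and fillable, so I regard your proof as correct in outline rather than gapped; but as written, these are exactly the points a referee would flag, and they are the machinery the paper's pointwise computation was designed to avoid.
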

\begin{proof}
    The effect monoid~$M$ is a lattice by Theorem~\ref{thm:latticeemon}.
    By Theorem~\ref{prop:convextotalisation}
            there is an OUS~$V$
            such that~$M \cong [0,1]_V$ as a convex effect algebra.
        It is easy to see that~$V$ is a lattice as well
            as any supremum reduces to one in the interval~$[0,1]_V$.
    $V$ is bounded~$\omega$-complete
        by Lemma~\ref{lem:completenessousemod} and hence Archimedean by Lemma~\ref{boundeddircomplisarchous},
        and norm complete by~\cite[Lemma 1.2]{wright1972measures}.
Then by Theorem~\ref{prop:yosida}
    we have~$V \cong C(\Phi)$,
        where~$\Phi$ is the Yosida spectrum of~$V$.
    Note~$C(\Phi)$ is bounded~$\omega$-complete
     iff~$\Phi$ is basically disconnected
     and~$C(\Phi)$ is bounded directed complete
        iff~$\Phi$ is extremally disconnected.
The multiplication of~$M$
    induces a multiplication on~$[0,1]_{C(\Phi)}$,
    which we will denote by~$f * g$ to distinguish it
    from the pointwise multiplication~$f \cdot g$.
To prove the Theorem,
    it remains to be shown that~$f * g = f \cdot g$.

Pick any~$\varphi \in \Phi$.
It is sufficient to show that~$\varphi(f * g) = \varphi(f) \varphi(g)$
    --- indeed then~$(f * g)(\varphi) = \varphi( f * g) = \varphi(f) \varphi(g)
                = f(\varphi) g(\varphi) = (f\cdot g)(\varphi)$.
The remainder of the proof is based on~\cite[Lemma~5.26]{riesz}.
Write~$d(a,b) \equiv a\vee b - a\wedge b$
        for either~$a,b \in [0,1]$ or~$a,b \in [0,1]_{C(\Phi)}$.
    Note~$f * (g \vee h) \leq (f*g) \vee (f*h)$
    and~$f * (g \wedge h) \geq (f*g) \wedge (f*h)$,
    hence~$d(f * g, f*h) \leq f * d(g,h)$ for~$f,g,h \in [0,1]_{C(\Phi)}$.
So in particular~$d(f * g, f * (\varphi(g) 1)) \leq f * d(g, \varphi(g) 1)
                    \leq d(g, \varphi(g)1)$.
The multiplication~$*$ is ``bilinear''
     by Proposition~\ref{prop:multiplicationisbilinear},
        so~$f * (\varphi(g) 1) = \varphi(g) f$.
By definition~$\varphi$ preserves infima, suprema and~$1$
    and so~$d(\varphi(f), \varphi(g)) = \varphi(d(f,g))$
        for any~$f,g \in [0,1]_{C(\Phi)}$.
Combining the last three facts, we see
\begin{align*}
        d(\,\varphi(f*g)\,,\, \varphi(f)\varphi(g)\,)
                & \ =\  \varphi(d(\, f*g\,,\, f * (\varphi(g) 1)\,)) \\
               & \ \leq\  \varphi(d(g, \varphi(g)1)) \\
               & \ =\  d(\varphi(g), \varphi(g)) \ = \ 0.
\end{align*}
    So indeed~$\varphi(f*g) = \varphi(f)\varphi(g)$,
        hence~$f*g = f\cdot g$.
\end{proof}

\begin{remark}%
The previous theorem can also be proven
    using Kadison's representation Theorem~\cite{kadison1951representation},
    which states that any norm-complete Archimedean OUS
    with a bilinear and positive product
    is isomorphic to~$C(X)$.
This requires one to show that the product on~$[0,1]_V$
    extends to a bilinear and positive product on the entirety of $V$,
    which can be done,
    but is a bit tedious (cf.~\cite[Theorem 46]{basmaster}).
\end{remark}

\section{Main theorems}\label{sec:maintheorems}

We now collect our previous results and prove our main theorems.

\begin{theorem}\label{thm:omega-complete-classification}
    Let $M$ be an $\omega$-complete effect monoid.
    Then there exists a basically disconnected compact Hausdorff space $X$,
    and an $\omega$-complete Boolean algebra $B$ such that 
    $M$ embeds into $[0,1]_{C(X)} \oplus B$.
\end{theorem}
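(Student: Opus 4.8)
The plan is to assemble the final classification theorem directly from the machinery already established in the preceding sections. The key observation is that Theorem~\ref{thm:omegacompleteembed} already hands us almost everything: it produces $\omega$-complete effect monoids $M_1$ and $M_2$ with $M_1$ convex and $M_2$ an $\omega$-complete Boolean algebra, together with an embedding $M\hookrightarrow M_1\oplus M_2$. So the only genuinely new work is to identify the convex piece $M_1$ with the unit interval of a $C(X)$ for a suitable space $X$, and then to check that direct sums behave well enough that stitching the two identifications together still yields an embedding.

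First I would invoke Theorem~\ref{thm:omegacompleteembed} to obtain the embedding $\iota\colon M\to M_1\oplus M_2$ with $M_1$ convex and $\omega$-complete and $M_2$ an $\omega$-complete Boolean algebra. Next I would apply Theorem~\ref{thm:convexextremallydisconnected} to the convex $\omega$-complete effect monoid $M_1$, yielding a basically disconnected compact Hausdorff space $X$ and an isomorphism $\alpha\colon M_1\xrightarrow{\cong}[0,1]_{C(X)}$. Setting $B\equiv M_2$, the map $\alpha\oplus\mathrm{id}\colon M_1\oplus M_2\to[0,1]_{C(X)}\oplus B$ is then an isomorphism of effect monoids, since an isomorphism in each coordinate induces an isomorphism of the pointwise direct sum. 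Composing, the map $(\alpha\oplus\mathrm{id})\circ\iota\colon M\to[0,1]_{C(X)}\oplus B$ is the desired embedding: it is a composite of an embedding with an isomorphism, hence order reflecting, injective, and multiplicative, so it is again an embedding of effect monoids.

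The one routine point worth stating explicitly is that embeddings and isomorphisms of effect monoids compose to give embeddings, and that coordinatewise isomorphisms of a direct sum assemble into an isomorphism of the direct sum; both are immediate from the pointwise definition of the operations on $E_1\oplus E_2$ and the fact (noted in the preliminaries) that an embedding is precisely an order-reflecting morphism. I do not anticipate a serious obstacle here: all the hard analytic and order-theoretic content---normality of multiplication, the lattice structure, the split into halvable and Boolean idempotents, and the Yosida representation---has already been discharged in Sections~\ref{sec:floorceiling} through~\ref{sec:OUS}. If anything, the only thing to be careful about is bookkeeping: ensuring that the space $X$ produced by Theorem~\ref{thm:convexextremallydisconnected} is indeed merely basically disconnected (not extremally disconnected) in the $\omega$-complete case, which is exactly what that theorem provides, so the statement matches without any strengthening of hypotheses.
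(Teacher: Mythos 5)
Your proposal is correct and is essentially identical to the paper's own proof: the paper likewise invokes Theorem~\ref{thm:omegacompleteembed} to embed $M$ into $M_1\oplus M_2$ and then applies Theorem~\ref{thm:convexextremallydisconnected} to identify the convex summand $M_1$ with $[0,1]_{C(X)}$ for a basically disconnected compact Hausdorff space $X$. The extra bookkeeping you spell out (composing the embedding with the coordinatewise isomorphism $\alpha\oplus\mathrm{id}$) is left implicit in the paper but is exactly the same argument.
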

\begin{proof}
    By Theorem~\ref{thm:omegacompleteembed} there exist
    $\omega$-complete effect monoids $M_1$ and $M_2$
    such that $M$ embeds into $M_1\oplus M_2$, where $M_1$ is convex and 
    $M_2$ is an $\omega$-complete Boolean algebra.
    By Theorem~\ref{thm:convexextremallydisconnected} $M_1=[0,1]_{C(X)}$
    for a basically disconnected compact Hausdorff space $X$.
\end{proof}

\begin{theorem}\label{mainthmdirectedcomplete}
    Let $M$ be a directed-complete effect monoid.
    Then there exists an extremally-disconnected compact Hausdorff space $X$,
    and an complete Boolean algebra $B$ such that 
    $M\cong [0,1]_{C(X)} \oplus B$.
\end{theorem}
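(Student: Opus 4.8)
The plan is to assemble this final theorem directly from the directed-complete machinery already established, mirroring the structure of the $\omega$-complete classification (Theorem~\ref{thm:omega-complete-classification}) but upgrading ``embeds'' to ``$\cong$'' by invoking the stronger splitting available under directed completeness. First I would apply Theorem~\ref{thm:directedcompleteisconvex} to obtain an \emph{isomorphism} $M\cong M_1\oplus M_2$, where $M_1$ is a convex directed-complete effect monoid and $M_2$ is a complete Boolean algebra. The essential gain over the $\omega$-complete case is that the counterexample obstructing an isomorphism there does not arise: here Lemma~\ref{lem:maximalsummableelement} produces a single idempotent $p=(a\ovee a)^\perp$ whose corner $pM$ is exactly the Boolean part and whose complementary corner $p^\perp M$ is halvable, so Corollary~\ref{cor:corneriso} splits $M$ on the nose rather than merely embedding it into a direct sum of many corners.

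Next I would identify the convex factor. Since $M_1$ is a convex directed-complete effect monoid, it is in particular $\omega$-complete and convex, so Theorem~\ref{thm:convexextremallydisconnected} applies and yields $M_1\cong [0,1]_{C(X)}$; moreover that theorem explicitly states that when $M_1$ is directed complete the space $X$ is \emph{extremally} disconnected, which is precisely the refinement demanded by the statement. This is the step that pins down the topological hypothesis on $X$, and it rests on the chain Lemma~\ref{lem:completenessousemod} (bounded directed completeness), Lemma~\ref{boundeddircomplisarchous} (Archimedeanness), norm-completeness, and finally Yosida's Theorem~\ref{prop:yosida} together with the observation that bounded directed completeness of $C(\Phi)$ is equivalent to extremal disconnectedness of $\Phi$.

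For the Boolean factor I would set $B\equiv M_2$, the complete Boolean algebra already handed back by Theorem~\ref{thm:directedcompleteisconvex}; no further work is needed since the statement asks only that $B$ be a complete Boolean algebra. Combining, $M\cong M_1\oplus M_2\cong [0,1]_{C(X)}\oplus B$ with $X$ extremally disconnected and $B$ complete, which is the claim.

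I do not expect a genuine obstacle, since all the substantive analytic and algebraic content has been discharged in the preceding sections; the proof is essentially a citation-level assembly. If there is any subtlety to check, it is purely bookkeeping: confirming that ``directed complete'' propagates correctly to the corner $p^\perp M$ so that Theorem~\ref{thm:convexextremallydisconnected} may legitimately be invoked in its directed-complete mode (rather than only its $\omega$-complete mode), and confirming that the corner decomposition of Corollary~\ref{cor:corneriso} is compatible with suprema of arbitrary directed sets. Both follow because each corner $pM$ is a retract of $M$ under the normal multiplication-by-$p$ map (Theorem~\ref{thm:multisnormal}), so directed suprema computed in $M$ restrict to directed suprema in the corner.
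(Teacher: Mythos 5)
Your proposal is correct and is essentially the paper's own proof: the paper disposes of this theorem by rerunning the argument of Theorem~\ref{thm:omega-complete-classification} with Theorem~\ref{thm:directedcompleteisconvex} supplying the isomorphism $M\cong M_1\oplus M_2$ in place of the mere embedding, and then invoking the directed-complete clause of Theorem~\ref{thm:convexextremallydisconnected} to get $M_1\cong[0,1]_{C(X)}$ with $X$ extremally disconnected. The bookkeeping you flag (directed completeness of the corners) is already absorbed into the statement of Theorem~\ref{thm:directedcompleteisconvex}, so no further verification is needed at this step.
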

\begin{proof}
    Same as previous theorem but using Theorem~\ref{thm:directedcompleteisconvex}.
\end{proof}

\begin{corollary}
	Let $M$ be an $\omega$-complete effect monoid. Then $M$ is commutative.
\end{corollary}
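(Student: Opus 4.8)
The plan is to derive commutativity as an immediate consequence of the embedding theorem (Theorem~\ref{thm:omega-complete-classification}), exploiting the fact that an embedding of effect monoids is an injective homomorphism and that both components of the target are commutative. First I would recall that by Theorem~\ref{thm:omega-complete-classification} there is an effect monoid embedding $\iota\colon M \to [0,1]_{C(X)} \oplus B$, where $X$ is a basically disconnected compact Hausdorff space and $B$ is an $\omega$-complete Boolean algebra. Since $\iota$ is a morphism of effect monoids, it preserves the multiplication: $\iota(a\cdot b) = \iota(a)\cdot \iota(b)$ for all $a,b\in M$.

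Next I would observe that the target $[0,1]_{C(X)}\oplus B$ is a \emph{commutative} effect monoid. Indeed, its multiplication is computed componentwise; the multiplication on $[0,1]_{C(X)}$ is pointwise multiplication of real-valued functions, which is commutative, and the multiplication on the Boolean algebra $B$ is the meet $\wedge$ (see Example~\ref{ex:booleanalgebra}), which is also commutative. Hence for any $a,b\in M$ we have
\begin{equation*}
    \iota(a\cdot b) \ =\ \iota(a)\cdot \iota(b) \ =\ \iota(b)\cdot \iota(a) \ =\ \iota(b\cdot a).
\end{equation*}

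Finally, because an embedding is injective (it is order reflecting, hence injective, as noted in the definition of embedding), the equality $\iota(a\cdot b) = \iota(b\cdot a)$ forces $a\cdot b = b\cdot a$. As $a,b\in M$ were arbitrary, $M$ is commutative.

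I do not anticipate any genuine obstacle here: all the substantive work has already been done in establishing the embedding theorem, and the remaining argument is simply that a sub-effect-monoid of a commutative effect monoid is commutative. The only point requiring minor care is confirming that the multiplication on the direct sum is genuinely commutative in both coordinates, which reduces to the two standard facts just cited about $C(X)$ and Boolean algebras.
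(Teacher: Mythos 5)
Your proof is correct and follows essentially the same route as the paper: both invoke the embedding into $[0,1]_{C(X)}\oplus B$ from Theorem~\ref{thm:omega-complete-classification}, note that both summands (and hence the direct sum) are commutative, and conclude commutativity of $M$ via the multiplication-preserving, injective embedding. Your write-up merely makes explicit the injectivity step that the paper leaves implicit.
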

\begin{proof}
	$M$ embeds into $[0,1]_{C(X)}\oplus B$ where $B$ is a Boolean algebra and $X$ is a basically disconnected compact Hausdorff space. The multiplication in $B$ is given by the join and hence is commutative, while the multiplication in $C(X)$ is given by pointwise multiplication in the real numbers and hence is also commutative. The multiplication of $M$ is then necessarily also commutative.
\end{proof}

\begin{theorem}\label{thm:no-zero-divisors}
    Let $M$ be an $\omega$-complete effect monoid with no non-trivial
    zero divisors.
    Then either $M=\{0\}$, $M=\{0,1\}$ or $M\cong [0,1]$.
\end{theorem}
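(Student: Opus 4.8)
The plan is to exploit the fact that the zero-divisor hypothesis collapses the idempotent structure of $M$ almost completely, and then feed this into the halvable-versus-Boolean dichotomy established in Section~\ref{sec:embeddingtheorem}. First I would dispose of the trivial case: if $M=\{0\}$ we are done, so assume $1 \neq 0$. The crucial observation is that the only idempotents in $M$ are $0$ and $1$. Indeed, if $p^2 = p$, then $p \cdot p^\perp = 0$ by Lemma~\ref{lem:idempotentiff}, so the absence of non-trivial zero divisors forces $p = 0$ or $p^\perp = 0$, that is $p \in \{0,1\}$.

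Consequently the maximal collection of non-zero orthogonal idempotents furnished by Proposition~\ref{prop:maxcollectionbooleanhalveable} must be exactly $\{1\}$: such a collection consists of non-zero idempotents, hence only of $1$, and since $1 \cdot 1 = 1 \neq 0$ the element $1$ is not orthogonal to itself, so the collection is a singleton; maximality excludes the empty set because $1$ is available. The proposition then guarantees that $1$ is either Boolean or halvable, and I would split into these two cases. In the Boolean case, Proposition~\ref{prop:sharpeffectmonoidisbooleanalgebra} makes $M$ an $\omega$-complete Boolean algebra; as every element of a Boolean algebra is idempotent and the only idempotents are $0$ and $1$, we get $M = \{0,1\}$.

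In the halvable case, $M$ is convex by Proposition~\ref{prop:dircompleteisconvex}, so Theorem~\ref{thm:convexextremallydisconnected} gives an effect-monoid isomorphism $M \cong [0,1]_{C(X)}$ for a basically disconnected compact Hausdorff space $X$, under which the product is pointwise multiplication. Since an isomorphism of effect monoids preserves the multiplication and $0$, it transports the zero-divisor hypothesis to $[0,1]_{C(X)}$, and it remains only to show that $X$ is a single point, whence $M \cong [0,1]_{C(X)} = [0,1]$.

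The main obstacle, though only mildly technical, is this last step. The space $X$ is non-empty, since otherwise $C(X) = \{0\}$ would force $1 = 0$. If $X$ contained two distinct points $x_1 \neq x_2$, then, $X$ being compact Hausdorff and hence normal, Urysohn's lemma would supply non-zero $f, g \in [0,1]_{C(X)}$ with disjoint supports, so that $f \cdot g = 0$ with $f, g \neq 0$, contradicting the absence of non-trivial zero divisors. Therefore $X$ is a one-point space and $M \cong [0,1]$, completing the trichotomy. (Note that the naive alternative of invoking the main embedding theorem directly does not work, because the direct sum $[0,1]_{C(X)} \oplus B$ always has zero divisors and we only have an embedding there; it is essential to use the corner decomposition to obtain an \emph{isomorphism} onto a convex or Boolean piece.)
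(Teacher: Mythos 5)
Your proof is correct, and while its overall skeleton matches the paper's (trivial idempotents, then convexity, then $[0,1]_{C(X)}$, then $X$ a singleton), it executes the two key steps differently. Where you invoke the Zorn-based dichotomy of Proposition~\ref{prop:maxcollectionbooleanhalveable} to conclude that $1$ is Boolean or halvable --- forcing a separate Boolean branch that you close off with Proposition~\ref{prop:sharpeffectmonoidisbooleanalgebra} --- the paper excludes $M=\{0,1\}$ at the outset, picks $s\neq 0,1$, and directly manufactures a halvable unit: $s\cdot s^\perp\neq 0$ by the zero-divisor hypothesis, $2(s\cdot s^\perp)$ exists and is halvable (Lemma~\ref{lem:selfsummable}), so $\ceil{2s\cdot s^\perp}$ is a halvable idempotent (Lemma~\ref{lem:ceilhalveable}) which must then equal $1$; this is more constructive and needs no case split, but both routes are sound. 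In the final step your argument is actually leaner than the paper's: from two distinct points of $X$ you use normality and Urysohn's lemma to produce two non-zero elements of $[0,1]_{C(X)}$ whose cozero sets are disjoint, i.e.\ non-trivial zero divisors, directly contradicting the hypothesis; the paper instead argues via the correspondence between idempotents and clopens, using basic disconnectedness of $X$ to show that $\overline{\supp g}$ is a clopen set that can be neither $\emptyset$ nor $X$. Your version avoids basic disconnectedness altogether, which is a small simplification. Your closing remark is also on point: the embedding of Theorem~\ref{thm:omega-complete-classification} would not suffice here, since freedom from zero divisors is not reflected by an embedding into a direct sum (which always has zero divisors); one genuinely needs the isomorphism onto a single convex or Boolean piece, exactly as both you and the paper arrange.
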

\begin{proof}
    Assume that $M\neq \{0,1\}$ and $M\neq \{0\}$. 
    We remark first that for any idempotent $p\in M$ we have $p\cdot p^\perp = 0$,
    and hence by the lack of non-trivial zero divisors we must have $p=0$ or $p=1$.
    Since $M\neq \{0,1\}$, there is an $s\in M$ such that $s\neq 0,1$, and hence
    we must have $s\cdot s^\perp \neq 0$. 
    By Lemma~\ref{lem:selfsummable} we then
    have an element $2(s\cdot s^\perp)$ that is halvable.
    Hence by Lemma~\ref{lem:ceilhalveable} 
    $\ceil{2s\cdot s^\perp}$ is also halvable.
    As this ceiling is an idempotent it must be equal to $1$ or to $0$. 
    If it were zero then 
    $2s\cdot s^\perp \leq \ceil{2s\cdot s^\perp} = 0$, 
    which contradicts $s\cdot s^\perp \neq 0$.
    So $1=\ceil{2s\cdot s^\perp}$ is halvable. 
    By Proposition~\ref{prop:dircompleteisconvex}, $M$ is then convex.
    Hence, by Theorem~\ref{thm:convexextremallydisconnected}~$M = [0,1]_{C(X)}$
        for some basically disconnected $X$.
    We will show that $X$ has a single element, which will complete the proof.

    As idempotents of~$[0,1]_{C(X)}$ correspond to clopens of~$X$,
        there are only two clopens in~$X$, namely~$X$ and~$\emptyset$.
    Reasoning towards contradiction,
        assume there are~$x,y \in X$ with~$x \neq y$.
    By Urysohn's lemma we can find~$f \in C(X)$
        with~$0 \leq f \leq 1$, $f(x) = 0$ and~$f(y) = 1$.
        $U_x \equiv f^{-1}([0,\frac13)$ and~$U_y\equiv f^{-1}((\frac23,1])$
        are two open sets with disjoint closure.
    Using Urysohn's lemma again, we can find~$g \in C(X)$
        with~$g(\overline{U_x}) = \{0\}$ and $g(\overline{U_y}) = \{1\}$.
    As~$X$ is basically disconnected,
        we know~$\overline{\supp g}$ is clopen.
    We cannot have~$\overline{\supp g} = \emptyset$
        as~$y \in U_y \subseteq \overline{\supp g}$.
        Hence~$\overline{\supp g} = X$.
        But then~$x \in U_x \subseteq X \backslash \overline{\supp g} = \emptyset$.
        Contradiction.
        Apparently~$X$ has only one point and so~$M \cong [0,1]$.
\end{proof}

\begin{remark}
In~\cite{kentathesis} it is shown that any $\omega$-effectus
    satisfying a natural property called \emph{normalisation} (basically,
    that any substate can be normalised to a  state) must have scalars
    without zero divisors. We have hence also completely classified the
    allowable scalars in a $\omega$-effectus satisfying normalisation.
\end{remark}

\begin{remark}
    As noted in Example~\ref{ex:CX}, the unit interval of any partially ordered ring where the sum and product of positive elements is again positive forms an effect monoid. Hence the previous results can also be seen as, to our knowledge, new results for $\sigma$-Dedekind-complete ordered rings. Although it is not clear how our results for the unit interval can be extended to results concerning the entire ring, our results might eventually lead to a version of the characterisation of the real numbers as the unique Dedekind-complete ordered field, where the condition of being a field can be weakened to being a domain (i.e. a ring without non-trivial zero-divisors).
\end{remark}

\section{Conclusion and outlook}\label{sec:conclusion}
We have shown that any $\omega$-complete effect monoid 
	embeds into a direct sum of a Boolean algebra and 
	the set of continuous functions from a given 
	Hausdorff space $X$ into the real unit interval $[0,1]$. 
As a result, the only $\omega$-complete effect monoids 
	without any zero divisors are either degenerate ($\{0\}$), 
	the Booleans ($\{0,1\}$), or the real unit interval ($[0,1]$),
	resulting in a dichotomy of sharp logic 
	and fuzzy probabilistic logic. 

The structure of an $\omega$-complete effect monoid; 
	partial addition, involution, multiplication 
	and directed limits; 
	captures in a sense the structure present 
	in the real unit interval and hence these results 
	give a foundational underpinning to why 
	the unit interval should be the designated structure 
	of the scalars in any non-sharp physical or logical theory.

An interesting follow-up question is to consider whether the $\omega$-completeness can be weakened somehow. There exist pathological (non-commutative)
    effect monoids
    (\cite[Ex.~4.3.9]{kentathesis} and~\cite[Cor.~51]{basmaster}), but all the known ones
    are non-Archimedean (i.e.~they have infinitesimal elements).
    So perhaps it is possible to embed any Archimedean effect monoid
    into a directed-complete effect monoid, similar
    to the Dedekind--MacNeille completion of a poset into a
    complete lattice.
    If this is the case, then we can essentially get rid of the assumption of $\omega$-completeness.

Another matter that deserves further investigation
    is the categorical aspect of our results.
It can be shown that Theorem~\ref{mainthmdirectedcomplete}
    has the following consequence:
    the category of directed-complete effect monoids
    with effect monoid homomorphisms 
    between them (which need not preserve suprema)
    is dually equivalent to the category
        which has as objects pairs~$(X,S)$,
            where~$S \subseteq X$ is a clopen subset
            of a extremally disconnected compact Hausdorff space~$X$,
    and morphisms~$(X,S) \to (Y,T)$
           are continuous maps~$f\colon X \to Y$
            satisfying~$f(S) \subseteq T$.
Obtaining a similar duality for~$\omega$-complete
effect monoids
    would in our estimation be rather non-trivial, but as it involves $\omega$-complete algebras, it can perhaps use results analogous to those of Ref.~\cite{furber2017unrestricted}.
        
We also aim to use our results regarding 
	$\omega$-complete effect monoids to study 
	$\omega$-effectuses~\cite{kentathesis}.
It seems possible to use this framework in combination 
	with our results to make a 
	``reconstruction of quantum theory''~\cite{hardy2001quantum,chiribella2011informational} 
	that does not a priori refer to the structure 
	of the real unit interval,
	\footnote{In the reconstruction of	
	Ref.~\cite{tull2016reconstruction}, almost all the work 
	is done without any assumptions on the scalars, 
	but in the end, the structure of the unit interval 
	is necessary to get to standard quantum theory, 
	which is something which would not be necessary 
	with our approach.} 
	for instance by adopting the effectus-based axioms 
	of Ref.~\cite{wetering2018reconstruction}.

Finally, when the associative bilinear multiplication of 
    an effect monoid is replaced by a binary operation 
    satisfying axioms related to the L\"uders product
    $(a,b)\mapsto \sqrt{a}b\sqrt{a}$ 
    for positive $a$ and $b$ in a C$^*$-algebra, 
    one gets a \emph{sequential effect algebra}. 
In forthcoming work 
~\cite{second} 
we use the techniques 
    and results of this paper
    to show that a spectral theorem holds in any
    \emph{normal} sequential effect algebra~\cite{gudder2002sequential}
    and furthermore that any such algebra is isomorphic 
    to the direct sum of a
    Boolean, convex and a ``almost convex'' 
    normal sequential effect algebra.

\noindent \textbf{Acknowledgements}: \censor{JvdW} is supported in part by \censor{AFOSR} grant \censor{FA2386-18-1-4028}.

\bibliographystyle{IEEEtran}
\bibliography{main}

\end{document}